\numberwithin{equation}{section}
\theoremstyle{plain}
\newtheorem{theorem}{Theorem}[section]
\newtheorem{corollary}[theorem]{Corollary}
\newtheorem{lemma}[theorem]{Lemma}
\newtheorem{notation}[theorem]{Notation}
\newtheorem{observation}[theorem]{Observation}
\newtheorem{definition}[theorem]{Definition}
\newtheorem{remark}[theorem]{Remark}
\newcommand{\be}{\mathbb E}
\newcommand{\ot}{\otimes}
\newcommand {\id} {{\textrm{id}}}
\newcommand{\wt}{\widetilde}
\newcommand{\wT}{\wt{T}}
\newcommand{\wV}{\wt{V}}
\begin{document}
	
	\title[A Characterization of invariant subspaces in the product system] 
	{A characterization of invariant subspaces for isometric representations of product system over $\mathbb{N}_0^{k}$}

	\date{\today}
	\author[Saini]{Dimple Saini \textsuperscript{*}}
	\address{Centre for mathematical and Financial Computing, Department of Mathematics, The LNM Institute of Information Technology, Rupa ki Nangal, Post-Sumel, Via-Jamdoli Jaipur-302031, (Rajasthan) India}
	\email{18pmt006@lnmiit.ac.in, dimple92.saini@gmail.com}
	\author[Trivedi]{Harsh Trivedi}
	\address{Centre for mathematical and Financial Computing, Department of Mathematics, The LNM Institute of Information Technology, Rupa ki Nangal, Post-Sumel, Via-Jamdoli Jaipur-302031, (Rajasthan) India}
	\email{trivediharsh26@gmail.com, harsh.trivedi@lnmiit.ac.in}
	\author[Veerabathiran]{Shankar Veerabathiran}
	\address{Institute of Mathematical Sciences,
		A CI of Homi Bhabha National Institute,
		CIT Campus, Taramani, Chennai, 600113,
		Tamilnadu, INDIA.}
	\email{shankarunom@gmail.com}

	\begin{abstract}
		Using the Wold-von Neumann decomposition for the isometric covariant representations due to Muhly and Solel, we prove an explicit  representation of the commutant of a doubly commuting pure isometric representation of the product system  over $\mathbb{N}_0^{k}.$ 
		As an application we study a complete characterization of invariant subspaces for a doubly commuting pure isometric representation of the product system. This provides us a complete set of isomorphic invariants. Finally, we classify a large class of an isometric covariant representations of the product system. 
	\end{abstract}

	\subjclass{46L08, 47A15, 47A80, 47B38, 47L55.}
	
	\keywords{Doubly commuting, Covariant representations, Fock space, Invariant subspaces, Isometries, Tensor product, Wandering subspaces}
	
	\maketitle
	
\section{Introduction}
Beurling's theorem \cite{AB49}, is a classical application of the Wold-von Neumann decomposition  \cite{N29,W38}, says that: Let $\mathcal K$ be a closed subspace of the Hardy space $H^2(\mathbb D)$ over unit disc $\mathbb D,$  and let $\mathcal{K}$ be an invariant subspace for multiplication operator $M_z$ then $\mathcal K=\theta H^2(\mathbb D)$ for some inner function $\theta$ in $H^{\infty}(\mathbb D).$
In Multivariable Operator Theory, the corresponding question is that whether or not we can prove Beurling-type characterization for all invariant subspaces of the shift $(M_{z_1},\dots, M_{z_n})$ on the  Hardy space over polydisc $H^2(\mathbb D^n).$ It has been first explained by Rudin \cite{W69} where it is pointed out that the Beurling theorem does not hold in this case. For $n>1,$ the parallel problem of the study of invariant subspaces of $H^2(\mathbb D^n)$ is therefore interesting and there is pioneering work by  Agrawal-Clark-Douglas \cite{ACD86}, Ahern-Clark \cite{A70}, Guo \cite{G99},  Izuchi \cite{I87} and by several other authors mentioned in the references of these papers.

Using the doubly commuting condition to characterize the existence of the wandering subspace for commuting tuple of isometries, S{\l}oci{\'n}ski described a Wold-type decomposition for doubly commuting isometries in \cite{Sl80}. Mandrekar \cite{M88} used the S{\l}oci{\'n}ski's Wold-type decomposition and discussed a Beurling-type characterization for $H^2(\mathbb D^2)$ under the condition that the tuple $(M_{z_1}, M_{z_2})$ is doubly commuting.
Based on a generalization of the S{\l}oci{\'n}ski's decomposition due to Sarkar \cite{JS14},
Sarkar-Sasane-Wick in \cite{SSW13} worked out the Mandrekar's result mentioned above for the polydisc case $H^2(\mathbb D^n)$.
Berger-Coburn-Lebow decomposition \cite{BCL78} was revisited by Maji-Sarkar-Sankar in \cite{MSS18} and the explicit decomposition in this paper is based on a representation theorem for commutators of shifts. On the other hand using the explicit representation theorem for commutators, a large class of $n$-tuples of commuting isometries is classified by Maji-Mundayadan-Sarkar-Sankar in \cite{MMSS19} where the following Beurling-Lax-Halmos type theorem is provided:
\begin{theorem}\label{th1}
	Let $\mathcal{E}$ be a Hilbert space, $\mathcal{K}$ be a closed subspace of $H^2_{\mathcal{E}_n}(\mathbb D),$ where $\mathcal{E}_n=H^2(\mathbb D^n)\ot \mathcal{E}$ and let $\mathcal{W}=\mathcal{K}\ominus z\mathcal{K}.$ Then $\mathcal{K}$ is invariant for $(M_z,M_{\kappa_1},\dots,M_{\kappa_n})$ if and only if there exist an inner operator $\Theta\in H^{\infty}_{B(\mathcal{W},\mathcal{E}_n)}(\mathbb D)$ and $(M_{\Phi_1} ,\dots,M_{\Phi_n})$ is an $n$-tuple of commuting shifts on $H^2_{\mathcal{W}}(\mathbb D)$ such that $$\mathcal{K}=\Theta H^2_{\mathcal{W}}(\mathbb D)\quad and \quad \kappa_i\Theta=\Theta \Phi_i,$$ where $\Phi_i(w)=P_{\mathcal{W}}(I_{\mathcal{K}}-wP_{\mathcal{K}}M_z^*)^{-1}M_{\kappa_i}|_{\mathcal{W}}$ for $i\in I_n$ and  $w\in \mathbb D.$
\end{theorem}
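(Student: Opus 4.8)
The plan is to reduce the statement to the classical one-variable Beurling-Lax-Halmos theorem applied to $M_z|_{\mathcal K}$, and then to recover the data of the remaining $n$ isometries by a transfer-function computation. The ``if'' direction is routine: if $\mathcal K=\Theta H^2_{\mathcal W}(\mathbb D)$ with $\Theta$ inner and $\kappa_i\Theta=\Theta\Phi_i$ for commuting shifts $M_{\Phi_i}$ on $H^2_{\mathcal W}(\mathbb D)$, then $M_z\mathcal K=\Theta M_zH^2_{\mathcal W}(\mathbb D)\subseteq\mathcal K$ (since $\Theta\in H^\infty$) and $M_{\kappa_i}\mathcal K=\Theta M_{\Phi_i}H^2_{\mathcal W}(\mathbb D)\subseteq\mathcal K$, so $\mathcal K$ is invariant for $(M_z,M_{\kappa_1},\dots,M_{\kappa_n})$. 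The content is the ``only if'' direction, which I would carry out in three steps.

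First, since $M_z$ is a pure isometry on $H^2_{\mathcal E_n}(\mathbb D)$ and $\mathcal K$ is $M_z$-invariant, the restriction $V:=M_z|_{\mathcal K}$ is again a pure isometry whose wandering subspace is exactly $\mathcal W=\mathcal K\ominus z\mathcal K$, because $\bigcap_{m\ge 0}V^m\mathcal K\subseteq\bigcap_{m\ge 0}z^mH^2_{\mathcal E_n}(\mathbb D)=\{0\}$. The Wold decomposition of $V$ together with Beurling-Lax-Halmos then yields an inner multiplier $\Theta\in H^\infty_{B(\mathcal W,\mathcal E_n)}(\mathbb D)$ with $\mathcal K=\Theta H^2_{\mathcal W}(\mathbb D)$, with $M_z\Theta=\Theta M_z$ (equivalently $\Theta^*M_z\Theta=M_z$ on $H^2_{\mathcal W}(\mathbb D)$), and with $\Theta\Theta^*=P_{\mathcal K}$; this is already the stated description of $\mathcal K$.

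Second, I would transport each $T_i:=M_{\kappa_i}|_{\mathcal K}$ to $H^2_{\mathcal W}(\mathbb D)$ by setting $\Psi_i:=\Theta^*M_{\kappa_i}\Theta$, which is well defined because $M_{\kappa_i}\mathcal K\subseteq\mathcal K=\Theta H^2_{\mathcal W}(\mathbb D)$. Each $\Psi_i$ is an isometry (a restriction of an isometry to an invariant subspace, conjugated by a unitary), the $\Psi_i$ commute because the $M_{\kappa_i}$ do, and each is pure since $\Theta$ carries $\bigcap_m\Psi_i^mH^2_{\mathcal W}(\mathbb D)$ into $\bigcap_m M_{\kappa_i}^mH^2_{\mathcal E_n}(\mathbb D)=\{0\}$. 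The decisive point is that $\Psi_i$ commutes with the shift on $H^2_{\mathcal W}(\mathbb D)$: using $M_z\Theta=\Theta M_z$, the commutation $M_{\kappa_i}M_z=M_zM_{\kappa_i}$, the identity $\Theta^*M_z\Theta=M_z$, and the fact that $P_{\mathcal K}=\Theta\Theta^*$ fixes $M_{\kappa_i}\Theta$ (whose range lies in $\mathcal K$), one obtains $\Psi_iM_z=\Theta^*M_{\kappa_i}M_z\Theta=\Theta^*M_zM_{\kappa_i}\Theta=\Theta^*M_z\Theta\,\Theta^*M_{\kappa_i}\Theta=M_z\Psi_i$. Hence $\Psi_i=M_{\Phi_i}$ for some $\Phi_i\in H^\infty_{B(\mathcal W)}(\mathbb D)$, and $(M_{\Phi_1},\dots,M_{\Phi_n})$ is an $n$-tuple of commuting shifts. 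The intertwining relation is then immediate: $\Theta M_{\Phi_i}=\Theta\Theta^*M_{\kappa_i}\Theta=P_{\mathcal K}M_{\kappa_i}\Theta=M_{\kappa_i}\Theta$, that is, $\kappa_i\Theta=\Theta\Phi_i$.

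Third, I would make $\Phi_i$ explicit. With $V^*=P_{\mathcal K}M_z^*|_{\mathcal K}$ and the orthogonal decomposition $\mathcal K=\bigoplus_{m\ge 0}V^m\mathcal W$ supplied by the Wold decomposition, one computes that for $\xi\in\mathcal W$ the $m$-th Taylor coefficient of the $H^2_{\mathcal W}(\mathbb D)$-function $\Phi_i(\cdot)\xi$ is $P_{\mathcal W}(P_{\mathcal K}M_z^*)^mM_{\kappa_i}\xi$; summing the resulting geometric series, which converges on $\mathbb D$ because $\|P_{\mathcal K}M_z^*\|\le 1$, gives $\Phi_i(w)=P_{\mathcal W}(I_{\mathcal K}-wP_{\mathcal K}M_z^*)^{-1}M_{\kappa_i}|_{\mathcal W}$ for $w\in\mathbb D$, as claimed. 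The step I expect to require the most care is the second one --- verifying that $\Psi_i$ is a genuine multiplication operator, i.e.\ that it commutes with the shift on $\mathcal W$ --- since this is where the interaction between $\Theta$, the compression $P_{\mathcal K}$, and the commutation of $M_{\kappa_i}$ with $M_z$ must be tracked precisely, and where the defining properties of the model operators $M_{\kappa_i}$ (that each is a pure isometry commuting with $M_z$) are genuinely used.
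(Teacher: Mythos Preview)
This theorem is not proved in the paper itself; it is quoted in the Introduction as Theorem~\ref{th1} from \cite{MMSS19} as motivation for the paper's own generalization. Your proposal is correct, and it coincides with the strategy the paper uses for its generalized version, Theorem~\ref{A6}: restrict the tuple to the invariant subspace, apply the one-variable Wold/Beurling--Lax--Halmos theorem to the first isometry to obtain the inner $\Theta$ (your Step~1), transport each remaining isometry through the Wold unitary and use the commutant characterization to recognize it as a multiplication operator $M_{\Phi_i}$ (your Step~2, which is precisely the $k=1$ case of the paper's Lemma~\ref{SS2}/Theorem~\ref{SS6}), and finally read off the explicit formula for $\Phi_i$ from the Wold expansion (your Step~3, matching the formula in Theorem~\ref{SS6}).
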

\noindent Along this direction, Maji-Sankar studied mixed invariant doubly commuting subspaces in the polydisc case in \cite{MS21}.

Cuntz \cite{C77} studied a $C^*$-algebra, known as Cuntz algebra, generated by row isometries, that is, the tuple $(V_1,\dots,V_n)$ of isometries with orthogonal range. Frazho \cite{F84} proposed the Wold decomposition for two isometries with the orthogonal range. Popescu \cite{P89} extended this decomposition to the case of an infinite sequence of isometries with orthogonal final spaces and extended the Beurling-Lax-Halmos theorem in \cite{P89b}, where Beurling-Lax-Halmos type multiplier/representer $\Theta$ in the noncommutative setup is discussed and it is defined on a Fock space. In this case, the map $\Theta$ intertwines corresponding shifts on the respective Fock spaces and it is known as a {\it multi-analytic} operator. The monograph by Ball-Bolotnikov \cite{BB22} discussed transfer-function realization-type formulas for the Beurling-Lax-Halmos representer using Noncommutative Function-theoretic Operator Theory. Popescu considered Operator Theory on noncommutative domains and studied weighted  Beurling-Lax-Halmos type representer in monograph \cite{P10}. Recently Ball-Bolotnikov \cite{BB22b} studied the representer $\Theta$ for free noncommutative multivariable setting. In \cite{DPS21}, Das-Pradhan-Sarkar developed the following theorem for doubly noncommuting shifts which is a generalization of Theorem \ref{th1} using the Fock space approach:

\begin{theorem}\label{DPSS3}
	Assume ${\bf S}=({\bf S}_{n_1},\dots, {\bf S}_{n_k})$ to be the $k$-tuple of tuples of noncommuting pure isometries, where ${\bf S}_{n_i}=({\bf S}_{i1},\dots, {\bf S}_{in_i})$ such that ${\bf S}_{ij}{\bf S}_{pq}={\bf S}_{pq}{\bf S}_{ij}$ and ${\bf S}_{ij}^*{\bf S}_{pq}={\bf S}_{pq}{\bf S}_{ij}^*$ for all $1\le i<p\le k, j=1,\dots,n_i$ and $q=1,\dots,n_p.$  Let $\mathcal{M}$ be a closed subspace of the Fock ${\bf n}$-module $\mathcal{F}^2_{\bf n}$ for ${\bf{n}}=(n_1,\dots,n_k) \in \mathbb N^{k}_0,$ $\mathcal{E}=\mathcal{M}\ominus \sum_{j=1}^{n_1}{\bf S}_{1j}\mathcal{M}$ and $\mathcal{E}_{\bf n}=\mathcal{F}^2_{n_2}\ot \dots \mathcal{F}^2_{n_k}.$ Then $\mathcal{M}$ is submodule of $\mathcal{F}^2_{\bf n}$ if and only if there
	exist multi-analytic operators $\Phi_{ij}\in R^{\infty}_{n_1} \overline{\ot} B(\mathcal{E}),$ $j=1,\dots,n_i$ and $i=2,\dots,k,$ and an inner multi-analytic operator $\Theta\in R^{\infty}_{n_1}\overline{\ot} B(\mathcal{E},\mathcal{E}_{\bf n})$ such that 
	$$\mathcal{M}=\Theta (\mathcal{F}^2_{n_1}\ot \mathcal{E}) \quad and \quad {\bf S}_{ij}\Theta=\Theta \Phi_{ij},$$ where the Fourier coefficients of $\Phi_{ij}$ are given by $\phi_{ij,\alpha^t}=P_{\mathcal{E}}(S^{\alpha^*}\ot I_{\mathcal{E}_{\bf n}}){\bf S}_{ij}|_{\mathcal{E}}.$
\end{theorem}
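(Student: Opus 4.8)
The plan is to prove the two implications separately, concentrating essentially all of the work in the forward direction; the engine is Popescu's noncommutative Wold / Beurling--Lax--Halmos theorem on the full Fock module, used in concert with the cross (double) commutation relations. For the reverse implication, assume $\mathcal{M}=\Theta(\mathcal{F}^2_{n_1}\ot\mathcal{E})$ with $\Theta$ an inner multi-analytic operator and ${\bf S}_{ij}\Theta=\Theta\Phi_{ij}$. Since $\Theta$ intertwines the left creation operators on $\mathcal{F}^2_{n_1}$, one has ${\bf S}_{1q}\Theta=\Theta(S_{1q}\ot I_{\mathcal{E}})$, so ${\bf S}_{1q}\mathcal{M}=\Theta(S_{1q}\ot I_{\mathcal{E}})(\mathcal{F}^2_{n_1}\ot\mathcal{E})\subseteq\mathcal{M}$; and for $i\ge 2$, ${\bf S}_{ij}\mathcal{M}=\Theta\Phi_{ij}(\mathcal{F}^2_{n_1}\ot\mathcal{E})\subseteq\Theta(\mathcal{F}^2_{n_1}\ot\mathcal{E})=\mathcal{M}$. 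Hence $\mathcal{M}$ is invariant under every ${\bf S}_{ij}$, i.e. a submodule.

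For the forward implication I would first single out the row isometry ${\bf S}_{n_1}=({\bf S}_{11},\dots,{\bf S}_{1n_1})$. On $\mathcal{F}^2_{\bf n}=\mathcal{F}^2_{n_1}\ot\mathcal{E}_{\bf n}$ it acts as $(S_{11}\ot I_{\mathcal{E}_{\bf n}},\dots,S_{1n_1}\ot I_{\mathcal{E}_{\bf n}})$ and is a pure isometric tuple with pairwise orthogonal ranges, and its restriction to the invariant subspace $\mathcal{M}$ retains these properties. Popescu's Wold decomposition then yields an orthogonal decomposition $\mathcal{M}=\bigoplus_{\alpha}{\bf S}_1^{\alpha}\mathcal{E}$, the sum over all words $\alpha$ over $\{1,\dots,n_1\}$ (the free monoid $\mathbb{F}^{+}_{n_1}$), where ${\bf S}_1^{\alpha}={\bf S}_{1\alpha_1}\cdots{\bf S}_{1\alpha_{|\alpha|}}$ and $\mathcal{E}=\mathcal{M}\ominus\sum_{j}{\bf S}_{1j}\mathcal{M}$ is the wandering subspace. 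The resulting map $e_\alpha\ot e\mapsto{\bf S}_1^{\alpha}e$ is an isometry which intertwines the creation operators, hence an inner multi-analytic operator $\Theta\in R^{\infty}_{n_1}\overline{\ot}B(\mathcal{E},\mathcal{E}_{\bf n})$ with $\mathcal{M}=\Theta(\mathcal{F}^2_{n_1}\ot\mathcal{E})$, $\Theta^{*}\Theta=I$ and $\Theta\Theta^{*}=P_{\mathcal{M}}$.

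For each $i\in\{2,\dots,k\}$ and $j\in\{1,\dots,n_i\}$ I would set $\Phi_{ij}:=\Theta^{*}{\bf S}_{ij}\Theta$ on $\mathcal{F}^2_{n_1}\ot\mathcal{E}$. Because $\mathcal{M}$ is a submodule, ${\bf S}_{ij}\mathcal{M}\subseteq\mathcal{M}$, which gives $\Theta\Theta^{*}{\bf S}_{ij}\Theta={\bf S}_{ij}\Theta$, i.e. the intertwining ${\bf S}_{ij}\Theta=\Theta\Phi_{ij}$. To see that $\Phi_{ij}$ is multi-analytic, combine $\Theta(S_{1q}\ot I_{\mathcal{E}})={\bf S}_{1q}\Theta$ and $\Theta^{*}{\bf S}_{1q}\Theta=S_{1q}\ot I_{\mathcal{E}}$ with the double commutation ${\bf S}_{ij}{\bf S}_{1q}={\bf S}_{1q}{\bf S}_{ij}$:
$$\Phi_{ij}(S_{1q}\ot I_{\mathcal{E}})=\Theta^{*}{\bf S}_{ij}{\bf S}_{1q}\Theta=\Theta^{*}{\bf S}_{1q}{\bf S}_{ij}\Theta=\Theta^{*}{\bf S}_{1q}\Theta\,\Phi_{ij}=(S_{1q}\ot I_{\mathcal{E}})\Phi_{ij}.$$
Thus $\Phi_{ij}$ is a contraction commuting with all $S_{1q}\ot I_{\mathcal{E}}$, so by the standard characterization of the commutant of the left creation algebra, $\Phi_{ij}\in R^{\infty}_{n_1}\overline{\ot}B(\mathcal{E})$.

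For the Fourier coefficients, I would expand ${\bf S}_{ij}e$ for $e\in\mathcal{E}$: from ${\bf S}_{ij}e\in\mathcal{M}=\bigoplus_{\beta}{\bf S}_1^{\beta}\mathcal{E}$ write ${\bf S}_{ij}e=\sum_{\beta}{\bf S}_1^{\beta}m_\beta$ with $m_\beta\in\mathcal{E}$; applying $({\bf S}_1^{\beta})^{*}=S^{\beta^{*}}\ot I_{\mathcal{E}_{\bf n}}$ and the projection $P_{\mathcal{E}}$, the orthogonality of the ranges of the ${\bf S}_1^{\beta}$ together with the relation $\mathcal{E}\perp\sum_{q}{\bf S}_{1q}\mathcal{M}$ annihilate all cross terms, leaving $m_\beta=P_{\mathcal{E}}(S^{\beta^{*}}\ot I_{\mathcal{E}_{\bf n}}){\bf S}_{ij}e$. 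Transporting through $\Theta$, the $m_\beta$ are exactly the Fourier coefficients of $\Phi_{ij}$ up to the word reversal $\beta\mapsto\beta^{t}$, which gives $\phi_{ij,\alpha^{t}}=P_{\mathcal{E}}(S^{\alpha^{*}}\ot I_{\mathcal{E}_{\bf n}}){\bf S}_{ij}|_{\mathcal{E}}$. I expect the principal difficulties to be (i) invoking the double commutation at precisely the step where it forces $\Phi_{ij}$ into the analytic Toeplitz algebra rather than merely into $B(\mathcal{F}^2_{n_1}\ot\mathcal{E})$, and (ii) the bookkeeping with the left/right creation conventions and the word reversals in the last step, including the verification that no cross terms survive $P_{\mathcal{E}}$.
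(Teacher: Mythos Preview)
The paper does not actually prove this statement: Theorem~\ref{DPSS3} is quoted in the introduction as a result of Das--Pradhan--Sarkar \cite{DPS21}, to motivate the paper's own generalization to $C^*$-correspondences. So there is no proof in the paper to compare against directly.

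That said, your argument is correct and is precisely the specialization to the Fock-module setting of what the paper does in its own main result, Theorem~\ref{A6}. There the authors restrict the ambient representation to $\mathcal{M}$, take the Wold--von~Neumann decomposition $\Pi_T$ of the first coordinate (your Popescu step), set $\Pi_{\mathcal{M}}=i_{\mathcal{M}}\Pi_T^*$ to obtain the inner multi-analytic operator (your $\Theta$), and then invoke their commutant Theorem~\ref{SS6} to produce the $\Phi_i$'s as multi-analytic maps with the stated Fourier-type formula. Your definition $\Phi_{ij}:=\Theta^*{\bf S}_{ij}\Theta$ together with the direct check of commutation with $S_{1q}\otimes I$ is exactly the content of that commutant lemma in this concrete situation, and your Fourier-coefficient computation matches the formula in Theorem~\ref{SS6}. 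The only cosmetic difference is that the paper packages the ``$\Phi_{ij}$ is multi-analytic'' step as a separate lemma rather than proving it inline; the underlying idea is identical.
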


Operator theory and its inseparable counterpart, function theory, are well-established disciplines with a long history of overwhelming connections with other subjects. The overarching
field, in keeping with traditional questions, is undergoing continuous development while also
witnessing the generalizations of classical concepts in various settings. One of the generalized
ideas of classical operator and function theory is the notion of Hilbert $C^*$-modules in terms
of $C^*$-correspondences and covariant representation. The paper is along this
line.

Pimsner in \cite{P97} extended the construction of Cuntz-Kreiger algebras using isometric covariant representations of $C^*$-correspondences and the construction is in terms of Fock modules. The setup of isometric  covariant representations provides a general framework to study row isometries using the tensor product notion of $C^*$-correspondences. Muhly-Solel derived an analogue of Popescu's Wold decomposition for a row isometry known as the Wold decomposition for an isometric covariant representation in \cite{MS99} which is based on Fock module approach. In \cite{TV21} the Beurling-Lax-Halmos type theorem in the case of the covariant representations of a $C^*$-correspondence is discussed where the Beurling-Lax-Halmos type multiplier $\Theta$ is defined on the Fock module as a multi-analytic operator.

In \cite{A89}, Arveson investigated the idea of a tensor product system of Hilbert spaces to categorize $E_0$-semigroups. Fowler \cite{F02} introduced the notion of Fock modules.  Solel \cite{S08} studied the doubly commuting notion of covariant representations of product system of $C^*$-correspondences and discussed the theory of regular dilations. The setting of doubly commuting isometric covariant representation is a generalization of $\Lambda$-doubly commuting row isometries considered by Popescu \cite{PoQ} (see also \cite{JP} for a related doubly noncommuting setup). For the doubly commuting isometric representations, Skalski-Zacharias generalized S{\l}oci{\'n}ski's Wold-type decomposition. Recently, Berger-Coburn-Lebow representation for pure isometric covariant representations of product system over $\mathbb{N}_0^2$ was studied in \cite{STV22} and this explicit representation is based on the joint multi-analytic representation theorem for commutators of the induced representation of a product system. As an application of this joint multi-analytic description, the aim of this study is to find a Beurling-Lax-Halmos type theorem for pure doubly commuting isometric representations of a product system and to characterize the invariant subspaces as in Theorems \ref{th1} and \ref{DPSS3}.

This paper concerns the issue of the delicate structure of the
invariant subspaces of the Hardy space over the polydisc, but in the setting of correspondences. Here we followed the theme of \cite{MMSS19} and obtained analgous results in
the setting of correspondences. The plan of the paper is as follows: In Section 2, a fundamental commutant theorem for a doubly commuting pure isometric representation of the product system is  proved. Section 3, presents a complete characterization of invariant subspaces for a doubly commuting pure isometric representation of a product system, as outlined in \cite{MMSS19}. In  Section 4, an application to nested invariant subspaces of Fock module and a complete set of isomorphic invariants of the corresponding invariant subspaces is proved.

\subsection{Preliminaries for Beurling-Lax-Halmos type theorem}
In this paper, $E$ always a {\it $C^*$-correspondence} over a $C^*$-algebra $\mathcal{B}$ with the left module action  given by a non zero $*$-homomorphism $\phi:\mathcal B\to \mathcal L(E)$  in the following sense: $ b\xi :=\phi(b)\xi, \:b\in\mathcal B,\xi\in E,$ 
and $\mathcal L(E)$ denotes the $C^*$-algebra of all adjointable operators on $E.$ Assume all  such $*$-homomorphisms  in this article  are nondegenerate, that is, the closed linear span of $\phi(\mathcal B)E$ equals $E.$  Suppose $F$ is a $C^*$-correspondence over $\mathcal{B},$ then we have the notion of tensor product $F \otimes _{\phi} E$ (cf. \cite{L95}) that  satisfies $$\langle \eta_1 \otimes \xi_1, \eta_2 \otimes \xi_2 \rangle= \langle  \xi_1, \phi(\langle \eta_1, \eta_2\rangle )\xi_2\rangle, \: \eta_{1}, \eta_2 \in F, \xi_1, \xi_2 \in E. $$ By $B(\mathcal{H})$ we denote the algebra of all bounded linear maps acting on the Hilbert space $\mathcal{H}.$  

\begin{definition}
	Let $V:E\to B(\mathcal H)$ be a linear map and $\sigma:\mathcal B\to B(\mathcal H)$ be a representation.
	The pair $(\sigma,V)$ is said to be a {\rm completely bounded covariant representation} (simply say, {\rm c.b.c. representation}) (cf. \cite{MS98}) of $E$ acting on $\mathcal H$  if  $V$ is completely bounded  as a linear map on $E$ (endowed with the usual operator space structure by viewing as a left corner  in  the linking algebra $\mathcal{L}_E$ of $E$) and
	\[
	V(b\xi c)=\sigma(b)V(\xi)\sigma(c) \quad for \quad \xi\in E,
	b,c\in\mathcal B.
	\] Moreover, $(\sigma, V)$ is called {\rm isometric} if $$V(\xi_1)^*V(\xi_2)=\sigma(\langle \xi_1,\xi_2 \rangle)\quad for \quad \quad \xi_1,\xi_2\in E.$$
\end{definition}
Define a bounded linear map $\wV:E\ot \mathcal{H}\to \mathcal{H}$ by $\wV(\xi\ot h)=V(\xi)h,\xi\in E,h\in \mathcal{H},$ then $\wV$ satisfies   $\widetilde{V}(\phi(b)\otimes I_{\mathcal
	H})=\sigma(b)\widetilde{V}$ for all $b\in\mathcal B$. From \cite[Lemma 3.5]{MS98}, $(\sigma, V)$ is a c.b.c. representation ({resp. \it isometric representation}) if and only if $\wV$ is a bounded operator ({resp. isometry}). 
\begin{definition}
	Let $\mathcal{N}$ be a closed subspace of a Hilbert space $\mathcal{H}.$ We say that $\mathcal{N}$ is $(\sigma,V)$-{\rm invariant} (resp. $(\sigma,V)$-{\rm reducing}) (cf. \cite{SZ08}) if it (resp. both $\mathcal{N},\mathcal{N}^{\perp}$) is invariant by every operator $V(\xi)$ for all $\xi \in E$ and $\sigma(\mathcal{B})$-invariant. The restriction  of this representation provides a new representation $(\sigma , V)|_{\mathcal{N}}$ of $E$ on $\mathcal{N}.$	
\end{definition}

\noindent For  $k\in \mathbb{N},$ define $\wV_k : E^{\ot k}\ot \mathcal{H} \to \mathcal{H}$ by $$\wV_k (\xi_1 \ot \dots \ot \xi_k \ot h) = V (\xi_1) \dots V(\xi_k) h,$$ for  $\xi_i \in E, h \in \mathcal H.$
The \emph{Fock space} of   $E$ (cf. \cite{F02}), $\mathcal{F}(E)= \bigoplus_{n \geq 0}E^{\otimes n},$ is a $C^*$-correspondence over $\mathcal{B},$ with the left module action of $\mathcal{B}$ on $\mathcal{F}(E)$ is defined  by $$\phi_{\infty}(b)\left(\oplus_{n \geq 0}\xi_n\right)=\oplus_{n \geq 0}b\xi_n , \:\: \xi_n \in E^{\otimes n}.$$
For $\xi \in E,$  the \emph{creation operator} $V_{\xi}$ on $\mathcal{F}(E)$ is defined by $V_{\xi}(\eta)=\xi \otimes \eta, \:\: \eta \in E^{\otimes n}, n\ge 0.$ Let $\pi $ be a representation of $\mathcal{B}$ acting on a Hilbert space $\mathcal{K}$. Define an  isometric representation $(\rho, S)$ of $E$ acting on a Hilbert space  $\mathcal{F}(E)\otimes_{\pi}\mathcal{K}$  by
\begin{align*}
	\rho(b)=\phi_{\infty}(b) \otimes I_{\mathcal{K}} \quad and \quad  S(\xi)=V_{\xi}\otimes I_{\mathcal{K}} ,\:\: \xi \in E,b \in \mathcal{B},
\end{align*} then we say that such a representation $(\rho, S)$ is referred to as the   {\it induced representation} (cf. \cite{RM74}) induced  by $\pi$.

Suppose that $(\sigma , V)$ is an isometric representation of $E$ acting on $\mathcal{H}.$ By Wold-von Neumann decomposition theorem for $(\sigma , V),$ due to Muhly and Solel \cite{MS99}, there exists a unitary $\Pi_V:\mathcal{H} (=\mathcal{H}_1{\oplus}\mathcal{H}_2)\to (\mathcal{F}(E)\otimes\mathcal{W})\oplus\mathcal{H}_2$ such that
\begin{align*}
	\Pi_V
	\begin{bmatrix}
		\sigma_1(b) & 0 \\
		0 & \sigma_2(b)
	\end{bmatrix}	= \begin{bmatrix}
		\rho(b) & 0 \\
		0 & \sigma_2(b)
	\end{bmatrix}\Pi_V
\end{align*} and
\begin{align*}
	\Pi_V
	\begin{bmatrix}
		V_1(\xi) & 0 \\
		0 & V_2(\xi)
	\end{bmatrix}	= \begin{bmatrix}
		S(\xi) & 0\\
		0 & V_2(\xi) \\
	\end{bmatrix}\Pi_V,
\end{align*} where $b\in \mathcal{B},\xi\in E,$
$\mathcal{H}_1=\bigoplus_{n\geq 0} \wV_n(E^{\ot n}\ot \mathcal{W}),$ $\mathcal{H}_2= \bigcap_{n \geq 1}\wV_n(E^{\otimes  n }\otimes  \mathcal{H})$ and $\mathcal{W}$ is a wandering subspace for $(\sigma,V)$ (that is, $\mathcal{W}\perp\wV_n(E^{\ot n}\ot \mathcal{W})$ for all $n\in \mathbb{N}$). Moreover,   $(\sigma , V)$ is pure  (that is, SOT-$\lim_{n \rightarrow \infty}\wV_{n}\wV^*_{n}=0 $), then $\mathcal{H}_2$ = $\{0\}$  and we have
$$ \Pi_{V}\sigma(b)=\rho(b)\Pi_{V} \quad and \quad \Pi_V V(\xi)=S(\xi)\Pi_V.$$

\noindent Then $\Pi_{V}$ is called {\it Wold-von Neumann decomposition} for the pure isometric representation $(\sigma, V)$. 

\begin{definition} Let $\mathcal{W}$ be a wandering subspace for $(\sigma , V)$ of $E$ acting on $\mathcal{H}.$ We say that $\mathcal{W}$ is {\rm generating wandering subspace}
	if $$\mathcal{H}= \bigvee_{n\in \mathbb{N}_0}\wV_n(E^{\ot n}\ot \mathcal{W}).$$ 
\end{definition}
The following Beurling-Lax-Halmos type theorem is from \cite{TV21} (also see \cite{MS99}):
\begin{theorem}\label{WWWW8}
	Let $(\sigma,V)$ be a pure isometric representation of $E$ acting on a Hilbert space $\mathcal{H}$ and $\mathcal{M}$ be a closed subspace of $\mathcal{H}.$ Then $\mathcal{M}$ is invariant for $(\sigma,V)$ if and only if there exist a Hilbert space $\mathcal{K},$ an inner operator $\Psi:\mathcal{K}_{\mathcal{F}(E)\ot \mathcal{K}}\to \mathcal{H}$ and a representation $\sigma_1$ of $\mathcal{B}$ on $\mathcal{K}$ such that $$\mathcal{M}=M_{\Psi}({\mathcal{F}(E)\ot \mathcal{K}}),$$ where $\mathcal{K}_{\mathcal{F}(E)\ot \mathcal{K}}$ is the generating wandering subspace for an induced representation $(\rho,S)$ of $E$ induced by $\sigma_1.$
\end{theorem}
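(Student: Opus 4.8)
The plan is to prove the two implications separately. The direction ``if'' is formal: if a Hilbert space $\mathcal K$, a representation $\sigma_1$ of $\mathcal B$ on $\mathcal K$, and an inner operator $\Psi$ as in the statement are given, let $(\rho,S)$ be the induced representation of $E$ on $\mathcal F(E)\ot_{\sigma_1}\mathcal K$ and recall that $M_\Psi\colon\mathcal F(E)\ot\mathcal K\to\mathcal H$ is then an isometry with $M_\Psi\,\rho(b)=\sigma(b)\,M_\Psi$ and $M_\Psi\,S(\xi)=V(\xi)\,M_\Psi$ for all $b\in\mathcal B$ and $\xi\in E$. For $\mathcal M:=M_\Psi(\mathcal F(E)\ot\mathcal K)$ the intertwining relations give $V(\xi)\mathcal M=M_\Psi\,S(\xi)(\mathcal F(E)\ot\mathcal K)\subseteq\mathcal M$ and $\sigma(b)\mathcal M\subseteq\mathcal M$, while $\mathcal M$ is closed because $M_\Psi$ is isometric; hence $\mathcal M$ is $(\sigma,V)$-invariant.

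For the ``only if'' direction I would start from a $(\sigma,V)$-invariant $\mathcal M$ and restrict: $(\sigma,V)|_{\mathcal M}$ is again an isometric covariant representation of $E$, now acting on $\mathcal M$. Writing $\wt W$ for the isometry associated with this restriction and using that $\mathcal M$ is invariant under each $V(\xi)$, one gets $\wt W_n\wt W_n^{*}\le P_{\mathcal M}\wV_n\wV_n^{*}|_{\mathcal M}$ for every $n$. Since $(\sigma,V)$ is pure, $\wV_n\wV_n^{*}\to0$ in the strong operator topology, and therefore $\wt W_n\wt W_n^{*}\to0$ as well; thus $(\sigma,V)|_{\mathcal M}$ is a \emph{pure} isometric representation.

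Next I would feed $(\sigma,V)|_{\mathcal M}$ into the Muhly--Solel Wold--von Neumann decomposition recalled above. Put $\mathcal K:=\mathcal M\ominus\overline{\operatorname{span}}\{V(\xi)m:\xi\in E,\ m\in\mathcal M\}$, which is a wandering subspace for $(\sigma,V)|_{\mathcal M}$ and is $\sigma(\mathcal B)$-reducing because $\sigma(b)V(\xi)m=V(\phi(b)\xi)m$; set $\sigma_1:=\sigma(\cdot)|_{\mathcal K}$. Purity makes the part $\mathcal H_2$ of the decomposition vanish, so one obtains a unitary $\Pi\colon\mathcal M\to\mathcal F(E)\ot_{\sigma_1}\mathcal K$ with $\Pi\,\sigma(b)|_{\mathcal M}=\rho(b)\,\Pi$ and $\Pi\,V(\xi)|_{\mathcal M}=S(\xi)\,\Pi$, where $(\rho,S)$ is induced by $\sigma_1$; by nondegeneracy of $\sigma_1$ the degree-zero summand $E^{\ot0}\ot_{\sigma_1}\mathcal K$ is identified with $\mathcal K$, which is then the generating wandering subspace $\mathcal K_{\mathcal F(E)\ot\mathcal K}$ of $(\rho,S)$.

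The remaining step, which I expect to be the only delicate point, is to recognise $M_\Psi:=\iota_{\mathcal M}\circ\Pi^{*}\colon\mathcal F(E)\ot\mathcal K\to\mathcal H$ (with $\iota_{\mathcal M}$ the inclusion) as the operator attached to an inner symbol. The intertwining relations above say exactly that $M_\Psi$ is an isometric multi-analytic operator with range $\mathcal M$; since such a module operator on $\mathcal F(E)\ot\mathcal K$ is uniquely determined by its restriction to the degree-zero summand $\mathcal K_{\mathcal F(E)\ot\mathcal K}$, one defines $\Psi:=M_\Psi|_{\mathcal K_{\mathcal F(E)\ot\mathcal K}}$, which is inner, and obtains $\mathcal M=M_\Psi(\mathcal F(E)\ot\mathcal K)$. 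The main obstacle is thus the bookkeeping of this identification --- checking the $\sigma_1$-balancing of the tensor products and the compatibility of $\Psi$ with the creation operators --- rather than any new estimate; the substantive input is the Muhly--Solel decomposition, which does the real work.
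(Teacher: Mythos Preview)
Your argument is correct. The paper does not itself prove this theorem---it is quoted from \cite{TV21}---but the method you outline (restrict $(\sigma,V)$ to $\mathcal M$, observe that purity is inherited via $\wt W_n\wt W_n^{*}\le \wV_n\wV_n^{*}|_{\mathcal M}$, apply the Muhly--Solel Wold decomposition to the restriction to obtain a unitary $\Pi$, and set $M_\Psi=i_{\mathcal M}\circ\Pi^{*}$) is precisely the template the paper later employs in the proof of Theorem~\ref{A6} for the product-system analogue.
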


\section{A fundamental commutant theorem for a doubly commuting pure isometric representation of the product system}
In this section, first we recall the important notion of joint multi-analytic completely bounded multiplier which we introduced in \cite{STV22}. 

For $k\in \mathbb{N},$ we denote $I_k:=\{1,\dots,k\}$ and $\mathbb{N}_{0}:= \mathbb{N} \cup \{0\}.$ A product system $\mathbb{E}$ over $\mathbb N^{k}_0$ is defined by a family of $C^*$-correspondence $\{E_1,\dots,E_k\}$, along with unitary isomorphisms $t_{j,i}: E_j \ot E_i \to E_i \ot E_j$ ($j>i$). And define $t_{i,i} = \id_{E_i \ot E_i}$,  $t_{j,i} = t_{i,j}^{-1}$ for $j<i.$ Then, we identify the $C^*$-correspondence $\be({\bf{n}}),$ for all
${\bf{n}}=(n_1,\dots,n_k) \in \mathbb N^{k}_0$ with $E_1^{\ot^{ n_1}}\ot \dots \ot E_{k}^{\ot^{n_{k}}}$  (see \cite{F02,MS99,TV21,A98}).
First, we start with the following definitions.

\begin{definition}
	The tuple $(\sigma, V^{(1)},\dots, V^{(k)})$ is called {\rm completely bounded, covariant representation} $($simply say, {\rm c.b.c. representation}$)$ (cf. \cite{MS98}) of $\be$ over $\mathbb N^{k}_0$ acting on a Hilbert space $\mathcal{H}$ if each $(\sigma,V^{(i)}), i\in I_k$ is a c.b.c. representation of $E_i$ on $\mathcal{H}$
	and it satisfy the commutative relation
	\begin{equation} \label{rep} \wV^{(i)} (I_{E_i} \ot \wV^{(j)}) = \wV^{(j)} (I_{E_j} \ot \wV^{(i)}) (t_{i,j} \ot I_{{\mathcal H}}),\quad \quad i,j\in I_{k}.
	\end{equation} Moreover, $(\sigma, V^{(1)},\dots, V^{(k)})$ is {\rm isometric (respectively, fully co-isometric)} if each $(\sigma, V^{(i)}),i\in I_{k}$ is isometric {(respectively, fully co-isometric)}.
\end{definition}

\begin{definition}	
	Let $(\sigma , V^{(1)},\dots,V^{(k)})$ be a c.b.c. representation of ${\mathbb{E}}$ over $\mathbb{N}_0^k$ on $\mathcal{H}.$ Let $\mathcal{K}$ be a closed subspace of $\mathcal{H}.$ We say that $\mathcal{K}$ is {\rm invariant} for $(\sigma , V^{(1)},\dots,V^{(k)})$ if it is $(\sigma,V^{(i)})$-invariant for all $i\in I_k.$
\end{definition}

The {\it Fock module} of $\mathbb{E}$ over $\mathbb{N}^{k}_0,$
$\mathcal{F}(\mathbb{E})=\bigoplus_{{\bf n} \in \mathbb{N}^{k}_0}\mathbb{E}({\bf n}),$ is a $C^*$-correspondence over $\mathcal{B},$ with the left action $\phi_{\infty}$ given by $$\phi_{\infty}(b)(\bigoplus_{{\bf{n}} \in \mathbb{N}_0^{k}} \xi_{\bf{n}}) = \bigoplus_{{\bf{n}} \in \mathbb{N}_0^{k}} b\xi_{\bf{n}}, \: b\in \mathcal{B},\xi_{\bf{n}} \in \mathbb{E}(\bf{n}).$$
For ${\bf{n}}=(n_1,\dots,n_{k}) \in \mathbb{N}_0^{k},$ 
define a map $V_{\bf{n}}: \mathbb{E}({\bf{n}}) \rightarrow B(\mathcal{H})$ by $$V_{\bf{n}}(\xi_{\bf{n}})h=\wV_{\bf{n}}(\xi_{\bf{n}} \ot h)\quad \quad  h \in \mathcal{H},\xi_{\bf{n}} \in \mathbb{E}({\bf{n}}),$$ where $\wV_{\bf{n}}: \mathbb{E}({\bf{n}}) \ot \mathcal{H} \rightarrow \mathcal{H}$ is given by 
$$\wV_{\bf{n}}=\wV^{(1)}_{n_1}(I_{E_1^{\ot n_1}} \ot \wV^{(2)}_{n_2})\dots(I_{E_1^{\ot n_1}\ot\dots\ot E_{k-1}^{\ot n_{k-1}}} \ot \wV^{(k)}_{n_{k}}).$$

Let $\mathcal{K}$ be a Hilbert space and $\pi$ be a representation of $\mathcal{B}$ on $\mathcal{K}.$  For $i\in I_{k},$ define an isometric representation $(\rho, S^{(i)})$ of a $C^*$-correspondence $E_i$ acting on a Hilbert space $\mathcal{F}(\mathbb{E}) \otimes_{\pi} \mathcal{K}$ by \begin{align*}
	\rho(b)=\phi_{\infty}(b) \otimes I_{\mathcal{K}} \quad and \quad
	S^{(i)}(\xi_i)=V_{{\xi}_i} \otimes I_{\mathcal{K}},\:\:\: b \in \mathcal{B}\:, \xi_i \in E_i,
\end{align*} where $V_{\xi_i}$ is the {\it creation operator} determined by $\xi_i$ on $\mathcal{F}(\mathbb{E}).$
We say that $(\rho, S^{(1)}, \dots,S^{(k)})$ is an {\it induced representation} (cf.\cite{RM74}) of $\mathbb{E}$ induced by $\pi.$ 

Two such representations  $(\sigma, V^{(1)},\dots, V^{(k)})$ and  $(\psi, T^{(1)},\dots, T^{(k)})$ of $\be$ acting on the Hilbert spaces $\mathcal H$ and $\mathcal{K},$ respectively, are {\it isomorphic} if there exists a unitary $U:\mathcal H \to
\mathcal{K}$ such that   $\psi(b)U=U\sigma(b)$ and $T^{(i)}(\xi_i)U = U V^{(i)} (\xi_i)$ for all $\xi_i \in E_i, b\in \mathcal{B},i \in
I_{k}.$

\begin{notation}
	Let $\mathcal{K}$ be a Hilbert space, $\pi$ be a representation of $\mathcal{B}$ on $\mathcal{K}$ and $\mathbb{E}$ be a product system over $\mathbb{N}_0^{k+1},k\ge 1$. Let $\Theta: E_{k+1} \to B(\mathcal{K}, \mathcal{F}({\mathbb{E}}_{k}) \otimes_{\pi} \mathcal{K})$ be a completely bounded  bi-module map, that is, $\Theta$ is completely bounded  and $\Theta(a \xi b)=\rho(a)\Theta(\xi)\rho(b),$ where $\xi \in E_{k+1}, a,b \in \mathcal{B}$ and ${\mathbb{E}}_{k}$ is a product system determined by the $C^*$-correspondences $\{E_1,\dots, E_{k}\}.$ Define a bounded linear map $\widetilde{\Theta}: E_{k+1} \otimes \mathcal{K} \longrightarrow \mathcal{F}({\mathbb{E}}_{k}) \otimes_{\pi} \mathcal{K}$  by $\widetilde{\Theta}(\xi \otimes h)=\Theta(\xi)h.$ 
	Since $\mathcal{F}({\mathbb{E}}_{k}) \otimes_{\pi} \mathcal{K}= \bigoplus_{{\bf n} \in \mathbb{N}_0^k}\wt{S}_{\bf n}(\mathbb{E}({\bf n })\otimes \mathcal{K}),$ we define a corresponding completely bounded  map $M_{\Theta} :E_{k+1}  \longrightarrow B(\mathcal{F}({\mathbb{E}}_{k}) \otimes_{\pi} \mathcal{K})$ by
	\begin{align*}
		M_{\Theta}(\xi) (S_{\bf n}(\xi_{\bf n}) h)=\widetilde{S}_{\bf n}(I_{\mathbb{E}({\bf n})} \otimes \widetilde{\Theta})(t_{k+1,{\bf n}} \otimes I_{\mathcal{K}})(\xi \otimes \xi_{\bf n} \otimes h),
	\end{align*}
	where  $\xi \in E_{k+1}, \xi_{\bf n} \in {\mathbb{E}({\bf n})}, h\in \mathcal{K}, {\bf n} \in \mathbb{N}_0^{k},$ $(\rho,S^{(1)},\dots,S^{(k)})$ is an induced representation of ${\mathbb{E}}_{k}$ induced by $\pi,$ $\widetilde{S}_{\bf{n}}=\widetilde{S}^{(1)}_{n_1}(I_{E_1^{\ot n_1}} \ot \widetilde{S}^{(2)}_{n_2})\dots(I_{E_1^{\ot n_1}\ot\dots\ot E_{k-1}^{\ot n_{k-1}}} \ot \widetilde{S}^{(k)}_{n_{k}})$ and $t_{k+1,{\bf n}}: E_{k+1} \ot {\mathbb{E}({\bf n})}  \rightarrow {\mathbb{E}({\bf n})} \ot E_{k+1} $ is an isomorphism which is a composition of the isomorphisms $\{t_{i,j}\: :\: 1 \leq i, j \leq k+1\}.$
	Clearly  $M_{\Theta}(\xi)|_{\mathcal{K}}=\Theta(\xi)$ for each $\xi \in E_{k+1},$  $(\rho,{M}_{\Theta})$ is a c.b.c. representation of $E_{k+1}$ on $\mathcal{F}({\mathbb{E}}_{k}) \otimes_{\pi} \mathcal{K}$ and it satisfies
	\begin{align}\label{SS1}
		M_{\Theta}(\xi)\left(\bigoplus_{{\bf n} \in \mathbb{N}_0^k}\xi_{\bf n} \otimes h_{\bf n}\right)=\sum_{{\bf n} \in \mathbb{N}_0^k}\widetilde{S}_{\bf n}(I_{\mathbb{E}({\bf n})} \otimes \widetilde{\Theta})(t_{k+1,{\bf n}} \otimes I_{\mathcal{K}})(\xi \otimes \xi_{\bf n} \otimes h_{\bf n}),
	\end{align} 
	where $\xi \in E_{k+1}, \xi_{\bf n} \in {\mathbb{E}({\bf n})}, h_{\bf n} \in \mathcal{K}.$ It is easy to see that
	$\widetilde{M}_{\Theta} (I_{E_{k+1}} \otimes \widetilde{S}^{(i)})=\widetilde{S}^{(i)} (I_{{E}_i} \otimes \widetilde{M}_{\Theta})(t_{k+1,i} \otimes I_{\mathcal{F}(\mathbb{E}) \otimes \mathcal{K}})$ for all $i\in I_k.$ It follows that $(\rho, S^{(1)},\dots,S^{(k)},M_{\Theta})$ is a c.b.c. representation of $\mathbb{E}$ on $\mathcal{F}({\mathbb{E}}_{k}) \otimes_{\pi} \mathcal{K}.$ Observe that $(\rho,{M}_{\Theta})$ is isometric if and only if $\widetilde{\Theta}$ is isometry.
\end{notation}

The following important result characterize commutant of a doubly commuting pure isometric representation of product system in terms of a multi-analytic operator which is a generalization of \cite[Lemma 2.2]{STV22}.

\begin{lemma}\label{SS2}
	Assume $\mathbb{E}$ to be a product system over $\mathbb{N}_0^{k+1}$. Let   $(\rho,S^{(1)},\dots,S^{(k)})$ be an induced representation of ${\mathbb{E}}_{k}$ induced by $\pi$ and let $(\rho ,V)$ be a c.b.c. representation of a $C^*$-correspondence $E_{k+1}$ on $\mathcal{F}({\mathbb{E}}_{k}) \otimes_{\pi} \mathcal{K}.$  Then $(\rho, S^{(1)},\dots,S^{(k)}, V)$ is a c.b.c. representation of $\mathbb{E}$ on $\mathcal{F}({\mathbb{E}}_{k}) \otimes_{\pi} \mathcal{K}$ if and only if there exists a completely bounded bi-module map  $\Theta: E_{k+1} \to B(\mathcal{K}, \mathcal{F}({\mathbb{E}}_{k}) \otimes_{\pi} \mathcal{K}) $ such that $$V=M_{\Theta}.$$
\end{lemma}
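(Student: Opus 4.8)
The plan is to prove the two directions separately, with the nontrivial content lying in the forward implication. For the easy direction, suppose $V = M_\Theta$ for some completely bounded bi-module map $\Theta : E_{k+1} \to B(\mathcal{K}, \mathcal{F}(\mathbb{E}_k) \otimes_\pi \mathcal{K})$. Then the discussion in the Notation preceding the lemma already records that $(\rho, M_\Theta)$ is a c.b.c.\ representation of $E_{k+1}$ and that $\widetilde{M}_\Theta(I_{E_{k+1}} \otimes \widetilde{S}^{(i)}) = \widetilde{S}^{(i)}(I_{E_i} \otimes \widetilde{M}_\Theta)(t_{k+1,i} \otimes I_{\mathcal{F}(\mathbb{E}) \otimes \mathcal{K}})$ for all $i \in I_k$; together with the fact that $(\rho, S^{(1)}, \dots, S^{(k)})$ is an induced representation, this says precisely that $(\rho, S^{(1)}, \dots, S^{(k)}, M_\Theta)$ satisfies all the defining commutation relations \eqref{rep}, hence is a c.b.c.\ representation of $\mathbb{E}$ over $\mathbb{N}_0^{k+1}$. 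So this direction is essentially a citation of the Notation block.

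For the forward direction, assume $(\rho, S^{(1)}, \dots, S^{(k)}, V)$ is a c.b.c.\ representation of $\mathbb{E}$. The natural candidate is to set $\Theta(\xi) := V(\xi)|_{\mathcal{K}}$ for $\xi \in E_{k+1}$, where $\mathcal{K} \cong \mathbb{E}(\mathbf{0}) \otimes_\pi \mathcal{K}$ sits inside $\mathcal{F}(\mathbb{E}_k) \otimes_\pi \mathcal{K}$ as the ``degree-zero'' summand. First I would check that $\Theta$ is a completely bounded bi-module map: complete boundedness is inherited from $V$, and the bi-module property $\Theta(a\xi b) = \rho(a)\Theta(\xi)\rho(b)$ follows from $V(a\xi b) = \rho(a)V(\xi)\rho(b)$ once one notes $\rho(b)$ preserves $\mathcal{K}$ (since $\pi$ is a representation on $\mathcal{K}$ and $\rho(b)|_{\mathcal{K}} = \pi(b)$). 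The crux is then to show $V = M_\Theta$, i.e.\ that $V$ is determined on all of $\mathcal{F}(\mathbb{E}_k) \otimes_\pi \mathcal{K}$ by its restriction to $\mathcal{K}$ via the explicit formula \eqref{SS1}. To do this I would use that every vector in $\mathcal{F}(\mathbb{E}_k) \otimes_\pi \mathcal{K}$ is a sum over $\mathbf{n} \in \mathbb{N}_0^k$ of vectors of the form $\widetilde{S}_{\mathbf{n}}(\xi_{\mathbf{n}} \otimes h)$ with $\xi_{\mathbf{n}} \in \mathbb{E}(\mathbf{n})$, $h \in \mathcal{K}$, and then compute $V(\xi)\widetilde{S}_{\mathbf{n}}(\xi_{\mathbf{n}} \otimes h)$ by repeatedly applying the commutation relation \eqref{rep} between $\widetilde{V}$ and each $\widetilde{S}^{(j)}$: each application moves a factor of $V$ past a creation operator at the cost of inserting one of the braiding maps $t_{k+1,j}$. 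Iterating across the multi-index $\mathbf{n}$ assembles the composite braiding $t_{k+1,\mathbf{n}}$ and pushes $V$ all the way to the right, where it acts on $E_{k+1} \otimes \mathcal{K}$ as $\widetilde{\Theta}$; what remains on the left is exactly $\widetilde{S}_{\mathbf{n}}(I_{\mathbb{E}(\mathbf{n})} \otimes \widetilde{\Theta})(t_{k+1,\mathbf{n}} \otimes I_{\mathcal{K}})$, matching the right-hand side of \eqref{SS1}.

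The main obstacle, and where I would spend the most care, is the bookkeeping in this iterated-braiding computation: one must verify that the order in which the relations \eqref{rep} are applied produces precisely the canonical composite isomorphism $t_{k+1,\mathbf{n}} : E_{k+1} \otimes \mathbb{E}(\mathbf{n}) \to \mathbb{E}(\mathbf{n}) \otimes E_{k+1}$ used in the definition of $M_\Theta$, and not some other composition of the $t_{i,j}$'s — this is where the coherence (associativity) built into the product system structure is used, and it is worth checking the base case $k=1$ (which is essentially \cite[Lemma 2.2]{STV22}) and then proceeding by an induction on $k$ that peels off the last correspondence $E_k$. A secondary point requiring attention is a density/continuity argument: the formula is first established on the algebraic span of the vectors $\widetilde{S}_{\mathbf{n}}(\xi_{\mathbf{n}} \otimes h)$, and one then invokes boundedness of both $V(\xi)$ and $M_\Theta(\xi)$ to conclude equality on the closed Fock module. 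Finally, one checks the compatibility clause $M_\Theta(\xi)|_{\mathcal{K}} = \Theta(\xi)$, which is immediate from \eqref{SS1} applied to the $\mathbf{n} = \mathbf{0}$ summand, confirming that the $\Theta$ we recovered is consistent with the one producing $V$.
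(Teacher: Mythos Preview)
Your proposal is correct and follows essentially the same approach as the paper's proof: define $\Theta(\xi) = V(\xi)|_{\mathcal{K}}$, then verify $V(\xi)S_{\mathbf{n}}(\xi_{\mathbf{n}})h = M_\Theta(\xi)S_{\mathbf{n}}(\xi_{\mathbf{n}})h$ by pushing $\widetilde{V}$ past $\widetilde{S}_{\mathbf{n}}$ via the commutation relation, and cite the Notation block (specifically Equation~\eqref{SS1}) for the converse. The paper is terser than you are---it writes the composite identity $\widetilde{V}(\xi \otimes S_{\mathbf{n}}(\xi_{\mathbf{n}})h) = \widetilde{S}_{\mathbf{n}}(I_{\mathbb{E}(\mathbf{n})} \otimes \widetilde{V})(t_{k+1,\mathbf{n}} \otimes I_{\mathcal{K}})(\xi \otimes \xi_{\mathbf{n}} \otimes h)$ in one line without the inductive braiding bookkeeping you outline---but your extra care there is sound and not a deviation in method.
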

\begin{proof}
	Let $(\rho, S^{(1)},\dots,S^{(k)}, V)$ be a c.b.c. representation of $\mathbb{E}$ on $\mathcal{F}({\mathbb{E}}_{k}) \otimes_{\pi} \mathcal{K}.$ For $i\in I_k,$ we have
	\begin{align*}
		\widetilde{V} (I_{E_{k+1}} \otimes \widetilde{S}^{(i)})=\widetilde{S}^{(i)} (I_{{E}_i} \otimes \widetilde{V})(t_{k+1,i} \otimes I_{\mathcal{F}({\mathbb{E}}_{k}) \otimes \mathcal{K}}),
	\end{align*} 
	Define a map $\Theta: E_{k+1} \to {B}(\mathcal{K}, \mathcal{F}({\mathbb{E}}_{k}) \otimes_{\pi} \mathcal{K}) $ by $\Theta(\xi)=V(\xi)|_{\mathcal{K}}$ for each $\xi \in E_{k+1}.$ From Equation (\ref{SS1}), for $\xi \in E_{k+1}$, $V(\xi)$ is uniquely determined by $\Theta(\xi).$
	Indeed, since $\mathcal{F}({\mathbb{E}}_{k}) \otimes_{\pi} \mathcal{K}= \bigoplus_{{\bf n} \in \mathbb{N}_0^k}\wt{S}_{\bf n}(\mathbb{E}({\bf n })\otimes \mathcal{K})$ and  for each $\xi \in E_{k+1}, \xi_{\bf n} \in \mathbb{E}({\bf n }),{{\bf n} \in \mathbb{N}_0^k}, h \in \mathcal{K},$ we have
	\begin{align*} 
		V(\xi)S_{\bf n}(\xi_{\bf n})h&=\widetilde{V}(\xi \otimes S_{\bf n}(\xi_{\bf n})h)=\widetilde{S}_{\bf n}(I_{\mathbb{E}({\bf n })} \otimes \widetilde{V})(t_{k+1,{\bf n}} \otimes I_{\mathcal{K}})(\xi \otimes \xi_{\bf n} \otimes h)
		\\&=\widetilde{S}_{\bf n}(I_{\mathbb{E}({\bf n })} \otimes \widetilde{\Theta})(t_{k+1,{\bf n}} \otimes I_{\mathcal{K}})(\xi \otimes \xi_{\bf n} \otimes h)
		=M_{\Theta}(\xi)  S_{\bf n}(\xi_{\bf n}) h.
	\end{align*}
	It follows that $V$ is uniquely determined by $\Theta,$ and hence   $V=M_{{\Theta}}.$ The converse part follows from Equation (\ref{SS1}).
\end{proof}



\begin{definition} 
	A c.b.c. representation $(\sigma, V^{(1)},\dots, V^{(k)})$ of $\mathbb{E}$ on $\mathcal H$ is said to be {\rm doubly commuting } if \begin{equation}
		\wV^{(i)^*} \wV^{(j)} =
		(I_{E_i} \ot \wV^{(j)})  (t_{j,i} \ot I_{\mathcal{H}})  (I_{E_j} \ot \wV^{(i)^*}) \quad \quad i\neq j \quad and\quad i,j\in I_{k}.
	\end{equation} We say that $(\sigma, V^{(1)},\dots, V^{(k)})$ is {\rm pure} if each $(\sigma,V^{(i)})$ is pure for all $i\in I_k.$
\end{definition}


Suppose that $(\sigma,V^{(1)},\dots,V^{(k)})$ is a doubly commuting pure isometric representation of $\mathbb{E}$ on $\mathcal H,$ then by \cite[Corollary 3.4]{TV21} 
\begin{equation}\label{ZZZ5}
	\mathcal{H}=\bigoplus_{{\bf n} \in \mathbb{N}^{k}_0}\wV_{\bf n}(\mathbb{E}({\bf n})\ot \mathcal{W}_{\mathcal{H}}),	
\end{equation}
where $\mathcal{W}_{\mathcal{H}}=\bigcap_{i=1}^{k} \mathcal{W}_i(= ker(\wV^{(i)^*}))$ is a generating wandering subspace for $(\sigma , V^{(1)},\dots,V^{(k)}).$ Define a unitary $\Pi_V:\mathcal{H}\to \mathcal{F}(\mathbb{E})\ot \mathcal{W}_{\mathcal{H}}$ by $\Pi_V(\wV_{\bf n}({\bf \xi_n}\ot h))={\bf \xi_n}\ot h$ for all $\xi_{\bf n}\in \mathbb{E}({{\bf n}}),h\in \mathcal{W}_{\mathcal{H}}, {\bf n}\in \mathbb{N}_0^{k}.$ It is easy to verify that
\begin{equation}\label{SS4}
	\Pi_V\sigma(b) = \rho(b)\Pi_V \quad and \quad  \Pi_V V^{(i)}(\xi_i)=S^{(i)}(\xi_i)\Pi_V,
\end{equation} where $b\in \mathcal{B},\xi_i\in E_i, i\in I_{k}$ and an induced representation $(\rho,S^{(1)}, \dots,S^{(k)})$ of $\mathbb{E}$ induced by $\sigma|_{\mathcal{W}_{\mathcal{H}}}.$ It follows that  $(\rho,S^{(1)}, \dots,S^{(k)})$ and $(\sigma , V^{(1)},\dots,V^{(k)})$ of $\mathbb{E}$ acting on the Hilbert spaces $\mathcal{F}(\mathbb{E})\ot \mathcal{W}_{\mathcal{H}}$ and $\mathcal{H},$ respectively, are isomorphic.

The following characterization of commutants (see \cite{NF70,MSS18,MMSS19}) will play a central role in this article.
\begin{theorem}\label{SS6}
	Let $\mathbb{E}$ be a product system over $\mathbb{N}_0^{k+1}.$ Suppose that $(\sigma , V^{(1)},\dots,V^{(k)})$ is a doubly commuting pure isometric representation of ${\mathbb{E}}_{k}$ over $\mathbb{N}_0^k$ on $\mathcal{H}$ and  $\Pi_{V}$ is the Wold-von Neumann decomposition of $(\sigma , V^{(1)},\dots,V^{(k)})$ with the wandering subspace $\mathcal{W}_{\mathcal{H}}$. Let $(\sigma, V^{(k+1)})$ be a c.b.c. representation of $E_{k+1}$ on $\mathcal{H}.$ Define a c.b.c. representation $(\rho, T)$ of $E_{k+1}$ on $\mathcal{F}({\mathbb{E}}_{k}) \otimes \mathcal{W}_{\mathcal{H}}$ by  $$ T(\xi)=\Pi_{V} V^{(k+1)}(\xi) \Pi_{V}^*,$$ where $ \pi= \sigma|_{\mathcal{W}_{\mathcal{H}}}, \rho$ is defined as above and $ \xi \in E_{k+1}.$
	Then  $(\sigma , V^{(1)},\dots,V^{(k+1)})$  is a c.b.c. representation of $\mathbb{E}$ on $\mathcal{H}$ if and only if there exists a completely bounded bi-module map $\Theta : E_{k+1} \rightarrow {B}(\mathcal{W}_{\mathcal{H}}, \mathcal{F}({\mathbb{E}}_{k}) \otimes_{\pi} \mathcal{W}_{\mathcal{H}})$ such that $$T=M_{\Theta}.$$ Moreover, $\Theta(\xi)=\sum_{{\bf n} \in \mathbb{N}_0^k}\widetilde{S}_{\bf n}(I_{\mathbb{E}({\bf n})} \otimes P_{\mathcal{W}_{\mathcal{H}}})\widetilde{V}_{\bf n}^{*}V^{(k+1)}(\xi)|_{\mathcal{W}_{\mathcal{H}}}, ~\xi \in E_{k+1}$ and  $P_{\mathcal{W}_{\mathcal{H}}}$ is an orthogonal projection of $\mathcal{H}$ onto $\mathcal{W}_{\mathcal{H}}.$
\end{theorem}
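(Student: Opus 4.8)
The plan is to reduce Theorem~\ref{SS6} to Lemma~\ref{SS2} by transporting the c.b.c.\ representation from $\mathcal{H}$ to the Fock module $\mathcal{F}(\mathbb{E}_k)\otimes_\pi\mathcal{W}_{\mathcal{H}}$ via the Wold--von Neumann unitary $\Pi_V$. First I would observe that, since $(\sigma,V^{(1)},\dots,V^{(k)})$ is doubly commuting pure isometric, Equation~\eqref{SS4} gives $\Pi_V\sigma(b)=\rho(b)\Pi_V$ and $\Pi_V V^{(i)}(\xi_i)=S^{(i)}(\xi_i)\Pi_V$ for $i\in I_k$, so that conjugation by $\Pi_V$ carries $(\sigma,V^{(1)},\dots,V^{(k)})$ exactly onto the induced representation $(\rho,S^{(1)},\dots,S^{(k)})$ of $\mathbb{E}_k$ induced by $\pi=\sigma|_{\mathcal{W}_{\mathcal{H}}}$. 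Defining $T(\xi)=\Pi_V V^{(k+1)}(\xi)\Pi_V^*$ as in the statement, $(\rho,T)$ is a c.b.c.\ representation of $E_{k+1}$ on $\mathcal{F}(\mathbb{E}_k)\otimes_\pi\mathcal{W}_{\mathcal{H}}$ because $\Pi_V$ is unitary and intertwines $\sigma$ with $\rho$ (so the bimodule relation $T(b\xi c)=\rho(b)T(\xi)\rho(c)$ is inherited, and complete boundedness is preserved under conjugation by a unitary).

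Next I would argue that $(\sigma,V^{(1)},\dots,V^{(k+1)})$ is a c.b.c.\ representation of $\mathbb{E}$ on $\mathcal{H}$ if and only if $(\rho,S^{(1)},\dots,S^{(k)},T)$ is a c.b.c.\ representation of $\mathbb{E}$ on $\mathcal{F}(\mathbb{E}_k)\otimes_\pi\mathcal{W}_{\mathcal{H}}$. This is just the statement that the single commutation relation \eqref{rep} linking $\wV^{(k+1)}$ to each $\wV^{(i)}$, $i\in I_k$, transforms under $\Pi_V$ into the corresponding relation linking $\wT$ to each $\wt{S}^{(i)}$: writing $\widetilde{T}=\Pi_V\widetilde{V}^{(k+1)}(I_{E_{k+1}}\otimes\Pi_V^*)$ and using $\Pi_V\widetilde{V}^{(i)}=\widetilde{S}^{(i)}(I_{E_i}\otimes\Pi_V)$ together with $(\Pi_V^*\otimes\Pi_V)$ or $(I\otimes\Pi_V^*)$ insertions at the right slots, each side of \eqref{rep} for the pair $(i,k+1)$ maps to each side of the corresponding relation for $(\wt{S}^{(i)},\wT)$, because the unitaries $t_{i,j}$ act only on the correspondence factors and commute with $I\otimes\Pi_V$. (The relations among $S^{(1)},\dots,S^{(k)}$ themselves already hold since $(\rho,S^{(1)},\dots,S^{(k)})$ is an induced representation.) Hence the equivalence.

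Once this reduction is in place, I would invoke Lemma~\ref{SS2} directly with $V$ there taken to be $T$: $(\rho,S^{(1)},\dots,S^{(k)},T)$ is a c.b.c.\ representation of $\mathbb{E}$ on $\mathcal{F}(\mathbb{E}_k)\otimes_\pi\mathcal{W}_{\mathcal{H}}$ if and only if there is a completely bounded bimodule map $\Theta:E_{k+1}\to B(\mathcal{W}_{\mathcal{H}},\mathcal{F}(\mathbb{E}_k)\otimes_\pi\mathcal{W}_{\mathcal{H}})$ with $T=M_\Theta$, and the proof of that lemma identifies $\Theta(\xi)=T(\xi)|_{\mathcal{W}_{\mathcal{H}}}$. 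Combining the two halves yields the stated equivalence. It remains to verify the explicit formula for $\Theta$: starting from $\Theta(\xi)=T(\xi)|_{\mathcal{W}_{\mathcal{H}}}=\Pi_V V^{(k+1)}(\xi)\Pi_V^*|_{\mathcal{W}_{\mathcal{H}}}$ and using that $\Pi_V^*$ restricted to the zeroth summand $\mathcal{W}_{\mathcal{H}}\subset\mathcal{F}(\mathbb{E}_k)\otimes\mathcal{W}_{\mathcal{H}}$ is the inclusion $\mathcal{W}_{\mathcal{H}}\hookrightarrow\mathcal{H}$, while on the $\mathbf{n}$-th summand $\Pi_V^*(\xi_{\mathbf n}\otimes h)=\widetilde{V}_{\mathbf n}(\xi_{\mathbf n}\otimes h)$; then writing the identity on $\mathcal{F}(\mathbb{E}_k)\otimes\mathcal{W}_{\mathcal{H}}$ as $\Pi_V(\sum_{\mathbf n}\widetilde{V}_{\mathbf n}(I_{\mathbb{E}(\mathbf n)}\otimes P_{\mathcal{W}_{\mathcal{H}}})\widetilde{V}_{\mathbf n}^*)\Pi_V^*$ via \eqref{ZZZ5} and composing with $V^{(k+1)}(\xi)$ gives $\Theta(\xi)=\sum_{\mathbf n\in\mathbb{N}_0^k}\widetilde{S}_{\mathbf n}(I_{\mathbb{E}(\mathbf n)}\otimes P_{\mathcal{W}_{\mathcal{H}}})\widetilde{V}_{\mathbf n}^*V^{(k+1)}(\xi)|_{\mathcal{W}_{\mathcal{H}}}$, after using $\Pi_V\widetilde{V}_{\mathbf n}=\widetilde{S}_{\mathbf n}(I_{\mathbb{E}(\mathbf n)}\otimes\Pi_V)$ and $\Pi_V|_{\mathcal{W}_{\mathcal{H}}}=\text{inclusion into zeroth summand}$. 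The main obstacle I anticipate is purely bookkeeping: carefully tracking the isomorphisms $t_{k+1,i}$ and the placement of the tensor-leg identities $I\otimes\Pi_V$, $I\otimes\Pi_V^*$ when transporting relation~\eqref{rep}, so that the ``if and only if'' in the reduction step is genuinely an equivalence and not just one implication; no deep new idea is needed beyond Lemma~\ref{SS2} and the intertwining properties of $\Pi_V$.
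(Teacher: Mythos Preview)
Your proposal is correct and follows essentially the same route as the paper: transport the commutation relations \eqref{rep} through the unitary $\Pi_V$ to show that $(\sigma,V^{(1)},\dots,V^{(k+1)})$ is a c.b.c.\ representation of $\mathbb{E}$ if and only if $(\rho,S^{(1)},\dots,S^{(k)},T)$ is, then invoke Lemma~\ref{SS2}, and finally derive the explicit formula for $\Theta$ using the resolution of the identity $\sum_{\mathbf n}\wV_{\mathbf n}(I_{\mathbb{E}(\mathbf n)}\otimes P_{\mathcal{W}_{\mathcal{H}}})\wV_{\mathbf n}^*=I_{\mathcal{H}}$ together with $\Pi_V\wV_{\mathbf n}=\wt{S}_{\mathbf n}(I_{\mathbb{E}(\mathbf n)}\otimes\Pi_V)$ and $\Pi_V^*|_{\mathcal{W}_{\mathcal{H}}}=\text{inclusion}$. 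The paper writes out the intertwining computation for $\wT(I_{E_{k+1}}\otimes\wt{S}^{(i)})$ line by line and states the converse direction briefly, exactly as you anticipated.
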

\begin{proof}
	Let $(\sigma , V^{(1)},\dots,V^{(k+1)})$ be a c.b.c. representation of $\mathbb{E}$ on $\mathcal{H}.$ Since $\Pi_{V}\widetilde{V}^{(i)}=\widetilde{S}^{(i)}(I_{E_i} \otimes \Pi_{V}),i\in I_k$ we have
	\begin{align*}
		\widetilde{T}(I_{E_{k+1}} \otimes \widetilde{S}^{(i)})&=\Pi_{V}\widetilde{V}^{(k+1)}(I_{E_{k+1}} \otimes \Pi^*_{V}\widetilde{S}^{(i)})
		\\&=\Pi_{V}\widetilde{V}^{(k+1)}(I_{E_{k+1}} \otimes \widetilde{V}^{(i)}(I_{E_i} \otimes \Pi^*_{V}))
		\\&=\Pi_{V}\widetilde{V}^{(i)}(I_{E_i} \otimes \widetilde{V}^{(k+1)})(t_{k+1,i} \otimes I_{\mathcal{H}})( I_{E_{k+1} \otimes E_i}\otimes \Pi^*_{V})
		\\&=\widetilde{S}^{(i)}(I_{E_i} \otimes \Pi_{V})(I_{E_i} \otimes \widetilde{V}^{(k+1)})(t_{k+1,i} \otimes \Pi^*_{V})
		\\&=\widetilde{S}^{(i)}(I_{E_i} \otimes \widetilde{T})(t_{k+1,i} \otimes I_{\mathcal{F}({\mathbb{E}}_{k}) \otimes \mathcal{W}_{\mathcal{H}}}).
	\end{align*}
	This implies that $(\rho, S^{(1)},\dots,S^{(k)}, T)$ is a c.b.c. representation of $\mathbb{E}$ on $\mathcal{F}({\mathbb{E}}_{k}) \otimes \mathcal{W}_{\mathcal{H}}$. By Lemma \ref{SS2}, there exists a completely bounded bi-module map $\Theta: E_{k+1} \rightarrow B(\mathcal{W}_{\mathcal{H}}, \mathcal{F}({\mathbb{E}}_{k}) \otimes \mathcal{W}_{\mathcal{H}})$ such that $T=M_{\Theta}.$  Since each $(\sigma, V^{(i)}),i\in I_k$ is pure and from Equation (\ref{ZZZ5}), we have
	\begin{align*}
		\sum_{{\bf n} \in \mathbb{N}_0^k} \widetilde{V}_{\bf n}(I_{\mathbb{E}({\bf n})} \otimes P_{\mathcal{W}_{\mathcal{H}}})\widetilde{V}_{\bf n}^* &=\sum_{{\bf n} \in \mathbb{N}_0^k} \widetilde{V}_{\bf n}(I_{\mathbb{E}({\bf n})} \otimes (I_{\mathcal{H}}-\widetilde{V} \widetilde{V}^*))\widetilde{V}_{\bf n}^{*} =I_{\mathcal{H}}.
	\end{align*}
	Let $w\in\mathcal{W}_{\mathcal{H}}$, then ${\Pi^*_{V}}(w)=w$ and it follows from the above equality, we get   $$V^{(k+1)}(\xi)w=\sum_{{\bf n} \in \mathbb{N}_0^k} \widetilde{V}_{\bf n}(I_{\mathbb{E}({\bf n})} \otimes P_{\mathcal{W}_{\mathcal{H}}})\widetilde{V}_{\bf n}^* V^{(k+1)}(\xi)w, \:\:  \xi \in  E_{k+1}.$$ Therefore\begin{align*}
		\Pi_{V}V^{(k+1)}(\xi)w&=\Pi_{V}\sum_{{\bf n} \in \mathbb{N}_0^k} \widetilde{V}_{\bf n}(I_{\mathbb{E}({\bf n})} \otimes P_{\mathcal{W}_{\mathcal{H}}})\widetilde{V}_{\bf n}^* V^{(k+1)}(\xi)w\\&=\sum_{{\bf n} \in \mathbb{N}_0^k}\widetilde{S}_{\bf n}(I_{\mathbb{E}({\bf n})} \otimes P_{\mathcal{W}_{\mathcal{H}}})\widetilde{V}_{\bf n}^{*}V^{(k+1)}(\xi)w.
	\end{align*} Hence  for $\xi \in E_{k+1},$  $\Theta(\xi)=\sum_{{\bf n} \in \mathbb{N}_0^k}\widetilde{S}_{\bf n}(I_{\mathbb{E}({\bf n})} \otimes P_{\mathcal{W}_{\mathcal{H}}})\widetilde{V}_{\bf n}^{*}V^{(k+1)}(\xi).$

	Conversely, suppose that $T=M_{\Theta},$ then by Lemma \ref{SS2}, it is easy to verify that $(\sigma , V^{(1)},\dots,\\V^{(k+1)})$  is a c.b.c. representation of $\mathbb{E}$ on $\mathcal{H}.$
\end{proof}

\begin{observation}
	(1). In the setting of Theorem \ref{SS6}, if $(\sigma, V^{(k+1)})$ is an isometric representation of $E_{k+1}$ on $\mathcal{H}.$ 
	Then the corresponding bi-module map $\Theta : E_{k+1} \rightarrow {B}(\mathcal{W}_{\mathcal{H}}, \mathcal{F}({\mathbb{E}}_{k})\otimes_{\pi} \mathcal{W}_{\mathcal{H}})$ is isometric.
	
	(2) If $(\sigma , V^{(1)},\dots,V^{(k+1)})$ is a doubly commuting, then by above and \cite[Proposition 4.6]{HS19},  $$\Theta(\xi)= P_{\mathcal{W}_{\mathcal{H}}}V^{(k+1)}(\xi)|_{\mathcal{W}_{\mathcal{H}}}=V^{(k+1)}(\xi)|_{\mathcal{W}_{\mathcal{H}}}, ~\xi \in E_{k+1}.$$ 
\end{observation}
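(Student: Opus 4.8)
Let me outline a proof of the Observation, treating its two assertions in turn; both reduce to elementary properties of the Wold--von Neumann unitary $\Pi_V$ and of the wandering subspace $\mathcal{W}_{\mathcal{H}}$.

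For part (1), I would use that $(\sigma,V^{(k+1)})$ isometric means $V^{(k+1)}(\xi_1)^*V^{(k+1)}(\xi_2)=\sigma(\langle\xi_1,\xi_2\rangle)$ for all $\xi_1,\xi_2\in E_{k+1}$. Combining $T(\xi)=\Pi_{V}V^{(k+1)}(\xi)\Pi_{V}^*$ with the unitarity of $\Pi_V$ and the intertwining $\Pi_V\sigma(b)=\rho(b)\Pi_V$ from \eqref{SS4}, one computes
\[
T(\xi_1)^*T(\xi_2)=\Pi_V V^{(k+1)}(\xi_1)^*V^{(k+1)}(\xi_2)\Pi_V^*=\Pi_V\sigma(\langle\xi_1,\xi_2\rangle)\Pi_V^*=\rho(\langle\xi_1,\xi_2\rangle),
\]
so $(\rho,T)$ is an isometric representation of $E_{k+1}$, equivalently $\widetilde{T}$ is an isometry. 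By Theorem~\ref{SS6} we have $T=M_\Theta$, and by the last assertion of the Notation preceding Lemma~\ref{SS2}, $(\rho,M_\Theta)$ is isometric if and only if $\widetilde{\Theta}$ is an isometry; hence $\widetilde{\Theta}$ is an isometry, i.e.\ $\Theta$ is isometric. Equivalently, one can argue directly: $\Theta(\xi)=T(\xi)|_{\mathcal{W}_{\mathcal{H}}}$ with $\mathcal{W}_{\mathcal{H}}$ identified with the summand $\widetilde{S}_{\bf 0}(\mathbb{E}({\bf 0})\ot\mathcal{W}_{\mathcal{H}})$ of $\mathcal{F}({\mathbb{E}}_{k})\ot_\pi\mathcal{W}_{\mathcal{H}}$, and restricting the displayed identity to this summand, using $\rho(b)|_{\mathcal{W}_{\mathcal{H}}}=\pi(b)$, gives $\Theta(\xi_1)^*\Theta(\xi_2)=\pi(\langle\xi_1,\xi_2\rangle)$.

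For part (2), the key point is that double commutativity of the whole tuple forces $V^{(k+1)}(\xi)$ to leave $\mathcal{W}_{\mathcal{H}}$ invariant, after which the series for $\Theta$ in Theorem~\ref{SS6} collapses. Fix $\xi\in E_{k+1}$ and $w\in\mathcal{W}_{\mathcal{H}}=\bigcap_{i=1}^{k}\ker(\widetilde{V}^{(i)*})$. For each $i\in I_k$ apply the doubly commuting identity
\[
\widetilde{V}^{(i)*}\widetilde{V}^{(k+1)}=(I_{E_i}\ot\widetilde{V}^{(k+1)})(t_{k+1,i}\ot I_{\mathcal{H}})(I_{E_{k+1}}\ot\widetilde{V}^{(i)*})
\]
to the vector $\xi\ot w$; since $\widetilde{V}^{(i)*}w=0$, the right-hand side vanishes, so $\widetilde{V}^{(i)*}V^{(k+1)}(\xi)w=0$, whence $V^{(k+1)}(\xi)w\in\bigcap_{i=1}^{k}\ker(\widetilde{V}^{(i)*})=\mathcal{W}_{\mathcal{H}}$ (this is \cite[Proposition~4.6]{HS19}). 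In particular $P_{\mathcal{W}_{\mathcal{H}}}V^{(k+1)}(\xi)w=V^{(k+1)}(\xi)w$. Next, for ${\bf n}\neq{\bf 0}$, writing $\widetilde{V}_{\bf n}^{*}=(I\ot\widetilde{V}^{(k)*}_{n_k})\cdots\widetilde{V}^{(1)*}_{n_1}$ and applying it to a vector of $\mathcal{W}_{\mathcal{H}}$, the factors $\widetilde{V}^{(i)*}_{0}$ act as the canonical isomorphisms until the first index $j$ with $n_j\ge 1$ is reached, and $\widetilde{V}^{(j)*}_{n_j}$ then annihilates $\mathcal{W}_{\mathcal{H}}$ since $\mathcal{W}_{\mathcal{H}}\subseteq\ker(\widetilde{V}^{(j)*})$; hence $\widetilde{V}_{\bf n}^{*}V^{(k+1)}(\xi)w=0$ for all ${\bf n}\neq{\bf 0}$. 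Substituting these two facts into
\[
\Theta(\xi)=\sum_{{\bf n}\in\mathbb{N}_0^k}\widetilde{S}_{\bf n}(I_{\mathbb{E}({\bf n})}\ot P_{\mathcal{W}_{\mathcal{H}}})\widetilde{V}_{\bf n}^{*}V^{(k+1)}(\xi)|_{\mathcal{W}_{\mathcal{H}}}
\]
from Theorem~\ref{SS6} leaves only the ${\bf n}={\bf 0}$ term, namely $P_{\mathcal{W}_{\mathcal{H}}}V^{(k+1)}(\xi)|_{\mathcal{W}_{\mathcal{H}}}=V^{(k+1)}(\xi)|_{\mathcal{W}_{\mathcal{H}}}$, as claimed.

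The only steps requiring care are the bookkeeping of $\widetilde{V}_{\bf n}$, its adjoint, and the degree-zero identifications $\mathbb{E}({\bf 0})\ot_\pi\mathcal{W}_{\mathcal{H}}\cong\mathcal{W}_{\mathcal{H}}$; the genuinely structural step, and the main obstacle, is the double-commutation argument in part (2) showing that $V^{(k+1)}(\cdot)$ preserves the wandering subspace of the first $k$ representations, for which one may instead simply invoke \cite[Proposition~4.6]{HS19}.
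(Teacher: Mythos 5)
Your argument is correct and follows essentially the route the paper intends: part (1) is exactly the combination of $T=\Pi_V V^{(k+1)}(\cdot)\Pi_V^*$ being isometric with the remark in the Notation that $(\rho,M_{\Theta})$ is isometric if and only if $\widetilde{\Theta}$ is an isometry, and part (2) is the collapse of the series for $\Theta$ in Theorem \ref{SS6} once one knows $V^{(k+1)}(\xi)\mathcal{W}_{\mathcal{H}}\subseteq\mathcal{W}_{\mathcal{H}}$, which the paper simply cites from \cite[Proposition 4.6]{HS19}. Your direct verification of that invariance from the doubly commuting relation (and of $\widetilde{V}_{\bf n}^{*}|_{\mathcal{W}_{\mathcal{H}}}=0$ for ${\bf n}\neq{\bf 0}$) is accurate and merely makes the cited step self-contained.
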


\section{Invariant subspaces for doubly commuting pure isometric representations}

In this section, we characterize invariant subspaces for doubly commuting pure isometric representation of a product system $\mathbb{E}$ over $\mathbb{N}_0^{k+1},k\ge 1$ on $\mathcal{H}$ $($see Theorem \ref{A6} $).$ Now we recall the following definitions and conclusions are from \cite{TV21}.

\begin{definition}
	Let $(\sigma,V^{(1)},\dots, V^{(k)})$ and $(\psi, T^{(1)},\dots, T^{(k)} )$  be two c.b.c. representations of $\mathbb{E}$ acting on the Hilbert spaces  $\mathcal{H}$ and $\mathcal{K},$  respectively. A bounded linear  map $B: \mathcal{H}\to \mathcal{K}$ is said to be {\rm multi-analytic} if 
	\begin{equation}\label{G1}
		B \sigma(b)h= \psi(b)Bh \quad and \quad	BV^{(i)}(\xi_i)h=T^{(i)}(\xi_i)Bh,
	\end{equation}
	for $ h \in \mathcal{H},\:\xi_i \in E_i, b \in \mathcal{B}$ and $ i\in I_{k}.$
\end{definition}

\begin{remark}
	\begin{itemize}
		\item [(1)]	Suppose that   $(\sigma,V^{(1)},\dots, V^{(k)})$ and $(\psi, T^{(1)},\dots, T^{(k)} )$ are  isometric representations of $\mathbb{E}$ acting on the Hilbert spaces   $\mathcal{H}$ and $\mathcal{K},$ respectively. Further, assume that  $(\sigma,V^{(1)},\dots, V^{(k)})$  is pure and doubly commuting, then
		$$ \mathcal{H}=\bigoplus_{\mathbf{n} \in \mathbb{N}_0^{k}}\wt{V}_{\mathbf{n}}(E(\mathbf{n}) \otimes \mathcal{W}_{\mathcal{H}}),$$ where  $\mathcal{W}_{\mathcal{H}}$ is the generating wandering subspace for  $(\sigma,V^{(1)},\dots,V^{(k)}).$ 
		Let $B: \mathcal{H} \to \mathcal{K}$ be a bounded operator that satisfies the intertwining relation (\ref{G1}), then $B$ is determined uniquely by the operator  $\Psi: \mathcal{W}_{\mathcal{H}} \to \mathcal{K}$ such that $\Psi \sigma(b)h=\psi(b)\Psi h$ for all $b\in \mathcal{B},h\in \mathcal{W}_{\mathcal{H}},$ where $ \Psi:= B|_{\mathcal{W}_{\mathcal{H}}}.$ This happens because for every $h \in \mathcal{W}_{\mathcal{H}}$ and $\xi_{\mathbf{n}} \in E(\mathbf{n}),$ we have $BV_{\mathbf{n}}(\xi_{\mathbf{n}})h=T_{\mathbf{n}}(\xi_{\mathbf{n}})\Psi h,$ where $\mathcal{H}=\bigoplus_{\mathbf{n} \in \mathbb{N}_0^{k}}\wt{V}_{\mathbf{n}}(E(\mathbf{n}) \otimes \mathcal{W}_{\mathcal{H}} ).$
		\item [(2)]	On the other hand, if we start a bounded operator $\Psi: \mathcal{W}_{\mathcal{H}} \to \mathcal{K} ( = \bigoplus_{\mathbf{n} \in \mathbb{N}_0^{k}}\wt{T}_{\mathbf{n}}(E(\mathbf{n}) \otimes \mathcal{W}_{\mathcal{K}} ))$  such that $\Psi \sigma(b)h=\psi(b)\Psi h,  h\in \mathcal{W}_{\mathcal{H}}.$ Since $\mathcal{H}=\bigoplus_{\mathbf{n} \in \mathbb{N}_0^{k}}\wt{V}_{\mathbf{n}}(E(\mathbf{n}) \otimes \mathcal{W}_{\mathcal{H}} ),$ define a bounded operator $M_{\Psi}: \mathcal{H}\to \mathcal{K}$ by $$M_{\Psi}V_{\mathbf{n}}(\xi_{\mathbf{n}})h=T_{\mathbf{n}}(\xi_{\mathbf{n}})\Psi h ,\:\:\: \xi_{\mathbf{n}} \in E(\mathbf{n}), h \in \mathcal{W}_{\mathcal{H}}.$$ Observe that $M_{\Psi}$ is the multi-analytic operator  for the representations  $(\sigma,V^{(1)},\dots, V^{(k)})$ and $(\psi, T^{(1)},\dots, T^{(k)} ).$  Moreover, 
		$$M_{\Psi}\left(\bigoplus_{\mathbf{n} \in \mathbb{N}^{k}_0}h_{\mathbf{n}} \right)=\sum_{\mathbf{n} \in \mathbb{N}^{k}_0} \wt{T}_{\mathbf{n}}(I_{E(\mathbf{n})} \otimes \Psi)\wt{V}_{\mathbf{n}}^*h_n, \hspace{1cm} \: \bigoplus_{\mathbf{n} \in \mathbb{N}^{k}_0}h_{\mathbf{n}} \in \mathcal{H}.$$
		\item [(3)]  An operator $\Psi: \mathcal{W}_{\mathcal{H}} \to \mathcal{K}$ satisfying $\Psi \sigma(b)h=\psi(b)\Psi h$ for all $b\in \mathcal{B},h\in \mathcal{W}_{\mathcal{H}}$ is called {\rm inner} if $M_{\Psi}$ is an isometry.
		\item [(4)] $\Psi$ is inner if and only if $\Psi$ is an isometry and $\Psi (\mathcal{W}_{\mathcal{H}})$ is the wandering subspace for  $(\psi, T^{(1)},\dots, T^{(k)} ),$ that means, $\Psi (\mathcal{W}_{\mathcal{H}})\perp\wT_{\bf n}(\mathbb{E}({\bf {n}}) \ot \Psi (\mathcal{W}_{\mathcal{H}}))$ for all $\bf n\in \mathbb{N}_0^k \setminus \{\bf 0\}.$
	\end{itemize}
\end{remark}

The following theorem for a covariant representation of a product system is motivated from \cite[Theorem 3.1]{MMSS19}. 

\begin{theorem}\label{ZZZ2}
	Let $(\sigma , V^{(1)},\dots,V^{(k)},V^{(k+1)},\dots,V^{(k+m)})$ be a doubly commuting pure isometric representation of $\mathbb{E}$ acting on the Hilbert space $\mathcal{H}.$ Then there exist a Hilbert space $\mathcal{K},$ a  representation of $\mathcal{B}$ on $\mathcal{K}$ and an isometric bi-module maps  $\Theta_j:E_{k+j}\to B(\mathcal{K},\mathcal{F}({\mathbb{E}_{k}})\ot \mathcal{K})$ such that $(\sigma , V^{(1)},\dots,V^{(k)},V^{(k+1)},\dots,V^{(k+m)})$ is isomorphic to a covariant representation $(\rho,S^{(1)},\dots,S^{(k)},M_{\Theta_1},\dots,M_{\Theta_m})$ of ${\mathbb{E}}$ on $\mathcal{F}({\mathbb{E}_{k}})\ot \mathcal{K}.$ 	Moreover,  
	$$\widetilde{\Theta}_i(I_{E_{k+i}}\ot \widetilde{\Theta}_j)=\widetilde{\Theta}_j(I_{E_{k+j}}\ot \widetilde{\Theta}_i)(t_{k+i,k+j}\ot I_{\mathcal{K}}),\quad \quad i,j\in I_m.$$
\end{theorem}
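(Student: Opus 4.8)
The plan is to build the isomorphism by iterating the Wold--von Neumann decomposition in the first $k$ directions, then repeatedly applying Theorem \ref{SS6} to peel off the remaining $m$ creation-type generators one at a time, and finally to deduce the compatibility relations among the $\widetilde\Theta_j$'s by transporting the commutation relations \eqref{rep} through the isomorphism.

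\medskip

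\noindent\textbf{Step 1: Set up the base case.} Since $(\sigma,V^{(1)},\dots,V^{(k)})$ is a doubly commuting pure isometric representation of $\mathbb{E}_k$ on $\mathcal H$, Equation \eqref{ZZZ5} and the discussion following it give a generating wandering subspace $\mathcal K:=\mathcal W_{\mathcal H}=\bigcap_{i=1}^k\ker(\widetilde V^{(i)^*})$ and a unitary $\Pi_V:\mathcal H\to\mathcal F(\mathbb E_k)\otimes\mathcal K$ intertwining $(\sigma,V^{(1)},\dots,V^{(k)})$ with the induced representation $(\rho,S^{(1)},\dots,S^{(k)})$ induced by $\pi:=\sigma|_{\mathcal K}$, as in \eqref{SS4}. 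This identifies $\mathcal H$ with $\mathcal F(\mathbb E_k)\otimes_\pi\mathcal K$ and the first $k$ generators with $S^{(1)},\dots,S^{(k)}$; the remaining generators $V^{(k+1)},\dots,V^{(k+m)}$ become, via conjugation by $\Pi_V$, c.b.c. representations $T_1,\dots,T_m$ of $E_{k+1},\dots,E_{k+m}$ on $\mathcal F(\mathbb E_k)\otimes\mathcal K$.

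\medskip

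\noindent\textbf{Step 2: Produce the multipliers.} For each $j\in I_m$, apply Theorem \ref{SS6} to the product system $\mathbb E_{k,j}$ determined by $\{E_1,\dots,E_k,E_{k+j}\}$: since $(\sigma,V^{(1)},\dots,V^{(k)},V^{(k+j)})$ is a c.b.c. representation of this product system on $\mathcal H$, there is a completely bounded bi-module map $\Theta_j:E_{k+j}\to B(\mathcal K,\mathcal F(\mathbb E_k)\otimes_\pi\mathcal K)$ with $T_j=M_{\Theta_j}$, given explicitly by $\Theta_j(\xi)=\sum_{\mathbf n\in\mathbb N_0^k}\widetilde S_{\mathbf n}(I_{\mathbb E(\mathbf n)}\otimes P_{\mathcal K})\widetilde V_{\mathbf n}^*V^{(k+j)}(\xi)|_{\mathcal K}$. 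Because $(\sigma,V^{(k+j)})$ is isometric, Observation following Theorem \ref{SS6}, part (1), gives that $\widetilde\Theta_j$ is an isometry, i.e. each $\Theta_j$ is an isometric bi-module map. Thus $\Pi_V$ implements an isomorphism of $(\sigma,V^{(1)},\dots,V^{(k+m)})$ with $(\rho,S^{(1)},\dots,S^{(k)},M_{\Theta_1},\dots,M_{\Theta_m})$ on $\mathcal F(\mathbb E_k)\otimes\mathcal K$.

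\medskip

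\noindent\textbf{Step 3: Derive the compatibility relations.} Fix $i,j\in I_m$ with $i\neq j$. Since $(\sigma,V^{(1)},\dots,V^{(k+m)})$ is a covariant representation of $\mathbb E$, the generators $V^{(k+i)}$ and $V^{(k+j)}$ satisfy \eqref{rep}, so $\widetilde V^{(k+i)}(I_{E_{k+i}}\otimes\widetilde V^{(k+j)})=\widetilde V^{(k+j)}(I_{E_{k+j}}\otimes\widetilde V^{(k+i)})(t_{k+i,k+j}\otimes I_{\mathcal H})$. Conjugating by $\Pi_V$ (and using that $\Pi_V$ intertwines, hence $\widetilde{M_{\Theta_i}}=\Pi_V\widetilde V^{(k+i)}(I_{E_{k+i}}\otimes\Pi_V^*)$) transfers this to $\widetilde{M_{\Theta_i}}(I_{E_{k+i}}\otimes\widetilde{M_{\Theta_j}})=\widetilde{M_{\Theta_j}}(I_{E_{k+j}}\otimes\widetilde{M_{\Theta_i}})(t_{k+i,k+j}\otimes I)$ on $\mathcal F(\mathbb E_k)\otimes\mathcal K$. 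Now restrict both sides to the generating subspace $E_{k+i}\otimes E_{k+j}\otimes\mathcal K$ (respectively $E_{k+i}\otimes E_{k+j}\otimes\mathcal K$ after applying $t$), on which $M_{\Theta_i}(\xi)|_{\mathcal K}=\Theta_i(\xi)$ and $M_{\Theta_j}$ acts through $\widetilde\Theta_j$; this collapses the $M_{\Theta}$-identity to the stated relation $\widetilde\Theta_i(I_{E_{k+i}}\otimes\widetilde\Theta_j)=\widetilde\Theta_j(I_{E_{k+j}}\otimes\widetilde\Theta_i)(t_{k+i,k+j}\otimes I_{\mathcal K})$. For $i=j$ the relation is trivial since $t_{k+i,k+i}=\mathrm{id}$.

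\medskip

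\noindent\textbf{Main obstacle.} The routine but delicate point is Step 3: one must check that the $M_\Theta$-commutation identity, which a priori only holds on all of $\mathcal F(\mathbb E_k)\otimes\mathcal K$, does genuinely restrict to the ``degree-zero'' piece $\mathcal K$ in a way that reads off the $\widetilde\Theta$-relation, rather than producing only a weaker consequence. This hinges on the defining formula \eqref{SS1} for $M_\Theta$ together with $M_\Theta(\xi)|_{\mathcal K}=\Theta(\xi)$, and on tracking the various isomorphisms $t_{\bullet,\bullet}$ through the tensor legs; once the bookkeeping of the $t$-maps is done carefully (using that $t_{k+i,k+j}$ is a composition of the basic $t_{p,q}$'s and is compatible with the Fock-module structure), the identity follows. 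The rest of the argument is an assembly of results already established in the excerpt.
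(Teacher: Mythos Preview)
Your overall strategy matches the paper's: conjugate the first $k$ generators to the induced representation via the Wold--von Neumann unitary $\Pi_V$, identify the remaining $m$ generators as multipliers $M_{\Theta_j}$ via the commutant description, and then read off the compatibility relation. Steps 1 and 2 are fine.

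The gap is in Step 3, and you have misdiagnosed the obstacle. The issue is not bookkeeping of the $t$-maps. Recall that by definition $\Theta_j:E_{k+j}\to B(\mathcal K,\mathcal F(\mathbb E_k)\otimes\mathcal K)$, so a priori $\widetilde\Theta_j$ maps $E_{k+j}\otimes\mathcal K$ into $\mathcal F(\mathbb E_k)\otimes\mathcal K$, not into $\mathcal K$. Hence the composition $\widetilde\Theta_i(I_{E_{k+i}}\otimes\widetilde\Theta_j)$ as written in the statement does not even type-check unless the range of $\widetilde\Theta_j$ lies in $\mathcal K$. Concretely, after you restrict the $M_\Theta$-identity to $E_{k+i}\otimes E_{k+j}\otimes\mathcal K$ and apply the inner factor $\widetilde{M_{\Theta_j}}=\widetilde\Theta_j$, you land in $E_{k+i}\otimes\mathcal F(\mathbb E_k)\otimes\mathcal K$; applying $\widetilde{M_{\Theta_i}}$ there invokes the full formula \eqref{SS1} and does \emph{not} collapse to $\widetilde\Theta_i$. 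So the ``restriction'' argument as you stated it does not produce the claimed relation.

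What is missing is exactly what the paper supplies: the subspace $\mathcal K=\bigcap_{i=1}^k\ker(\widetilde V^{(i)*})$ is \emph{reducing} for each $(\sigma,V^{(k+j)})$, because the representation is doubly commuting (this is Observation (2) after Theorem \ref{SS6}, or can be checked directly from the doubly commuting relations between $V^{(k+j)}$ and $V^{(i)}$, $i\le k$). Hence $\Theta_j(\xi)=V^{(k+j)}(\xi)|_{\mathcal K}$ actually maps $\mathcal K$ to $\mathcal K$, so $(\pi,\Theta_1,\dots,\Theta_m)$ is itself a covariant representation of the product system determined by $\{E_{k+1},\dots,E_{k+m}\}$ on $\mathcal K$, and the desired relation is nothing but \eqref{rep} for the restricted representation. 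Once you insert this observation, Step 3 becomes immediate; without it, the argument does not go through.
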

\begin{proof}
	Let $(\sigma , V^{(1)},\dots,V^{(k+m)})$ be a doubly commuting pure isometric representation of $\mathbb{E}$ on $\mathcal{H},$ then by \cite[Corollary 3.4]{TV21} $\mathcal{H}=\bigoplus_{{\bf n} \in \mathbb{N}^{k+m}_0}\wV_{\bf n}(\mathbb{E}({\bf n})\ot \mathcal{W}_{\mathcal{H}}),$ where $\mathcal{W}_{\mathcal{H}}$ is the generating wandering subspace for $(\sigma, V^{(1)},$ $ \dots, V^{(k+m)}).$ For ${\bf n}=(n_1,\dots,n_k,n_{k+1},\dots,n_{k+m}) \in \mathbb{N}^{k+m}_0,$ one can identify $\check{\bf n}_{k}=(n_{k+1},\dots,n_{k+m})\in \mathbb{N}^{m}_0$ with $(0,\dots,0,n_{k+1},\dots,n_{k+m})\in \mathbb{N}^{k+m}_0,$ similarly  ${\bf n}_{k} \in \mathbb{N}^{k}_0$ 
	so that ${\bf n}=({\bf n}_{k},\check{\bf n}_{k}) \in   \mathbb{N}^{k+m}_0.$ 
	Therefore  $$\mathcal{H}= \bigoplus_{{\bf n}_{k}\in \mathbb{N}_0^k}\wV_{{\bf n}_{k}}(\mathbb{E}({\bf n}_{k})\ot \mathcal{K}),$$ where ${\mathcal{K}}=\bigoplus_{{\check{\bf n}_{k}} \in \mathbb{N}^{m}_0}\wV_{{\check{\bf n}_{k}}}({\mathbb{E}}({\check{\bf n}_{ k}})\ot \mathcal{W}_{\mathcal{H}}).$  In fact $\mathcal{K}$ is the generating wandering subspace for $(\sigma,V^{(1)},\dots,V^{(k)}).$ Since $\mathcal{K}$ is  $\sigma(\mathcal{B})$-invariant, let $(\rho,S^{(1)},\dots,S^{(k)})$ be the induced representation of $\mathbb{E}_{k}$ over $\mathbb{N}_0^{k}$ induced by $\sigma|_{\mathcal{K}}.$  Define a unitary  map $U:\mathcal{H}\to \mathcal{F}(\mathbb{E}_{k})\ot \mathcal{K}$ by $$U(\wV_{{\bf n}_{k}}(\xi_{{\bf n}_k}\ot h))=\xi_{{\bf n}_k}\ot h, \quad  \quad \xi_{{\bf n}_k}\in \mathbb{E}({{\bf n}_k}),h\in \mathcal{K}.$$ Then, for $\xi_i\in E_i$  and $i\in I_k,$ we have 
	\begin{align*}
		UV^{(i)}(\xi_i) \wV_{{\bf n}_{k}}(\xi_{{\bf n}_k}\ot h)&=U \wV_{e_i+{{\bf n}_{k}}}(\xi_i\ot \xi_{{\bf n}_k}\ot h)=\xi_i\ot\xi_{{\bf n}_k}\ot h\\&=S^{(i)}(\xi_i)(\xi_{{\bf n}_k}\ot h)=S^{(i)}(\xi_i)U \wV_{{\bf n}_{k}}(\xi_{{\bf n}_k}\ot h),
	\end{align*} where $e_i \in \mathbb{N}_0^{k}$  whose  $i^{th}$ entry is 1 and all other entries zero. Similarly, $U$ satisfies  $U\sigma(b)=\rho(b)U$ for all $b\in \mathcal{B}.$ This shows that $(\sigma,V^{(1)},\dots, V^{(k)})$ is isomorphic to $(\rho,S^{(1)},\dots, S^{(k)}).$ For $j\in I_m,$ note that $\mathcal{K}$ is invariant for $(\sigma, V^{(k+j)})$ and $\mathcal{F}(\mathbb{E}_{k})\ot \mathcal{K}= \bigoplus_{{{\bf n}_{ k}}\in \mathbb{N}_0^k}\wt{S}_{{{\bf n}_{ k}}}(\mathbb{E}({{\bf n}_{k}})\otimes \mathcal{K}),$ we define a completely bounded bi-module maps $W^{(j)}:E_{k+j}\to B(\mathcal{F}(\mathbb{E}_{k})\ot \mathcal{K})$ by $$W^{(j)}(\eta)(S_{{{\bf n}_{k}}}(\xi_{{{\bf n}_{k}}})h)=\widetilde{S}_{{{\bf n}_{ k}}}(I_{\mathbb{E}({{\bf n}_{k}})}\ot \wV^{(k+j)})(t_{k+j,{{{\bf n}_{ k}}}}\ot I_{\mathcal{K}})(\eta\ot \xi_{{{\bf n}_{k}}}\ot h),$$  $\eta \in E_{k+j},\xi_{{{\bf n}_{k}}}\in \mathbb{E}({{\bf n}_{k}}),h\in \mathcal{K}$ and ${{\bf n}_{k}}\in \mathbb{N}_0^k.$ Clearly $(\rho,W^{(j)})$ is an isometric representation of $E_{k+j}$ on $\mathcal{F}(\mathbb{E}_{k})\ot \mathcal{K.}$
	Since $(\sigma , V^{(k+1)},\dots,V^{(k+m)})$ is a doubly commuting pure isometric representation, $(\rho,W^{(1)},\dots,W^{(m)})$ is also a doubly commuting pure isometric representation. Also,  it is easy to verify that $(\rho,S^{(1)},\dots,S^{(k)},W^{(1)},\dots,W^{(m)})$ is a doubly commuting pure isometric representation of $\mathbb{E}$  on $\mathcal{F}(\mathbb{E}_{ k})\ot \mathcal{K}.$  
	For each $\eta\in E_{k+j},\xi_{{{\bf n}_{k}}}\in \mathbb{E}({{\bf n}_{ k}})$ and $h\in \mathcal{K},$ 
	\begin{align*}
		&UV^{(k+j)}(\eta) U^*|_{\mathbb{E}({{\bf n}_{ k}})\ot \mathcal{K}}(\xi_{{{\bf n}_{k}}}\ot h)=UV^{(k+j)}(\eta)\wV_{{\bf n}_{k}}(\xi_{{\bf n}_k}\ot h)\\&=U\wV^{(k+j)}(I_{E_{k+j}}\ot \wV_{{\bf n}_{k}})(\eta \ot \xi_{{{\bf n}_{k}}}\ot h)\\&=U\wV_{{{\bf n}_{ k}}}(I_{\mathbb{E}({{\bf n}_{ k}})}\ot \wV^{(k+j)})(t_{k+j,{{\bf n}_{ k}}}\ot I_{\mathcal{K}})(\eta \ot \xi_{{{\bf n}_{k}}}\ot h)\\&=(I_{\mathbb{E}({{\bf n}_{k}})}\ot \wV^{(k+j)})(t_{k+j,{{\bf n}_{ k}}}\ot I_{\mathcal{K}})(\eta \ot \xi_{{{\bf n}_{k}}}\ot h)={W}^{(j)}(\eta)(\xi_{{{\bf n}_{k}}}\ot h).
	\end{align*} Then $U\wV^{(k+j)}(I_{E_{k+j}}\ot U^*)=\widetilde{W}^{(j)}, j \in I_m$
	and hence the representation $(\rho,S^{(1)},\dots,S^{(k)},W^{(1)},\\\dots,W^{(m)})$ is isomorphic to $(\sigma , V^{(1)},\dots,V^{(k+m)}).$ By using Lemma \ref{SS2}, there exist isometric bi-module maps  $\Theta_j:E_{k+j}\to B(\mathcal{K}, \mathcal{F}(\mathbb{E}_{ k})\ot \mathcal{K})$  such that $$W^{(j)}=M_{\Theta_j},\quad \quad j\in I_{m}.$$
	
	Observe that  the subspace $\mathcal K$ is $(\rho,W^{(1)},\dots,W^{(m)})$-invariant  (infact $\mathcal{K}$ is  reducing), therefore the bi-module maps 	 $\Theta_j$ satisfies 
	$$\widetilde{\Theta}_i(I_{E_{k+i}}\ot \widetilde{\Theta}_j)=\widetilde{\Theta}_j(I_{E_{k+j}}\ot \widetilde{\Theta}_i)(t_{k+i,k+j}\ot I_{\mathcal{K}}), $$ for all $i,j\in I_m.$
\end{proof}

\begin{observation}
	(1) In the setting of Theorem \ref{ZZZ2}, if $(\sigma,V^{(k+1)},\dots,V^{(k+m)})$ is a c.b.c. (resp. isometric, doubly commuting) representation, then the corresponding $(\rho,M_{\Theta_1},\dots,M_{\Theta_m})$ is a c.b.c. (resp. isometric, doubly commuting) representation. 
	
	(2) Let $(\sigma , V^{(1)},\dots,V^{(k+1)})$ be a doubly commuting pure isometric representation of $\mathbb{E}$ on $\mathcal{H}.$ Then there exist a representation of $\mathcal{B}$ acting on a Hilbert space $\mathcal{K}$ and an isometric bi-module maps  $\Theta_i:E_{i+1}\to B(\mathcal{K})$ such that $(\sigma , V^{(1)},\dots,V^{(k+1)})$ is isomorphic to $(\rho,S,M_{\Theta_1},\dots,M_{\Theta_k})$ of ${\mathbb{E}}$ acting on a Hilbert space $\mathcal{F}({E}_1)\ot \mathcal{K},$ where $$\Theta_i(\xi_i)=V^{(i+1)}(\xi_i)|_{\mathcal{K}}\quad for \quad \xi_i\in E_{i+1},i\in I_k.$$ 
\end{observation}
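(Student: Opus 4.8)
The plan is to extract both assertions directly from the construction already carried out in the proof of Theorem~\ref{ZZZ2}; I do not expect either part to require a genuinely new idea.

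\textbf{Part (1).} That proof furnishes a unitary $U\colon\mathcal H\to\mathcal F(\mathbb E_{k})\ot\mathcal K$ satisfying $U\sigma(b)=\rho(b)U$, $UV^{(i)}(\xi_i)=S^{(i)}(\xi_i)U$ for $i\in I_{k}$, and $U\wV^{(k+j)}(I_{E_{k+j}}\ot U^{*})=\widetilde M_{\Theta_j}$ for $j\in I_{m}$. First I would observe that the last batch of identities says precisely that $U$ implements an isomorphism between $(\sigma,V^{(k+1)},\dots,V^{(k+m)})$ and $(\rho,M_{\Theta_1},\dots,M_{\Theta_m})$. Since conjugation by a unitary preserves boundedness, complete boundedness, and the isometry property, the c.b.c.\ and isometric cases follow at once; for the doubly commuting case I would substitute $\widetilde M_{\Theta_j}=U\wV^{(k+j)}(I\ot U^{*})$ into $\widetilde M_{\Theta_i}^{*}\widetilde M_{\Theta_j}$, cancel the inner $U^{*}U=I_{\mathcal H}$, apply the doubly commuting relation for $\wV^{(k+i)},\wV^{(k+j)}$, and reinsert the $U$'s --- the flip $t_{k+i,k+j}$ passes through unchanged because $U$ acts only on the Hilbert-space factor. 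This is routine.

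\textbf{Part (2).} Here I would apply Theorem~\ref{ZZZ2} with the retained block taken to be the single correspondence $E_1$ and the absorbed block to be $E_2,\dots,E_{k+1}$, i.e.\ with the parameters ``$k$'' and ``$m$'' of that theorem replaced by $1$ and $k$. This produces in one stroke: the Hilbert space $\mathcal K=\bigoplus_{(n_2,\dots,n_{k+1})\in\mathbb N_0^{k}}\wV_{(0,n_2,\dots,n_{k+1})}\big(\mathbb E(0,n_2,\dots,n_{k+1})\ot\mathcal W_{\mathcal H}\big)$, which is the generating wandering subspace of $(\sigma,V^{(1)})$; the representation $\pi=\sigma|_{\mathcal K}$ and the induced representation $(\rho,S)$ on $\mathcal F(E_1)\ot\mathcal K$; isometric bi-module maps $\Theta_i$, $i\in I_{k}$, which from the explicit formula in that proof are $\Theta_i(\xi_i)=UV^{(i+1)}(\xi_i)|_{\mathcal K}$; the isomorphism of $(\sigma,V^{(1)},\dots,V^{(k+1)})$ with $(\rho,S,M_{\Theta_1},\dots,M_{\Theta_k})$; and the relations $\widetilde\Theta_i(I_{E_{i+1}}\ot\widetilde\Theta_j)=\widetilde\Theta_j(I_{E_{j+1}}\ot\widetilde\Theta_i)(t_{i+1,j+1}\ot I_{\mathcal K})$. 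The only point not literally contained in Theorem~\ref{ZZZ2} is that $\Theta_i$ takes values in $B(\mathcal K)$, equivalently that each $V^{(i+1)}$ leaves $\mathcal K$ invariant, for then $U|_{\mathcal K}=\id_{\mathcal K}$ lets one drop the $U$ and write $\Theta_i(\xi_i)=V^{(i+1)}(\xi_i)|_{\mathcal K}$.

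To close that last gap I would use that the \emph{full} tuple $(\sigma,V^{(1)},\dots,V^{(k+1)})$ --- not merely the absorbed block --- is doubly commuting: a short computation from the commutation relation~\eqref{rep} and the doubly commuting relation between $V^{(1)}$ and $V^{(i+1)}$ shows that both $\mathcal K$ (which here equals $\ker\wV^{(1)^{*}}$) and $\mathcal K^{\perp}$ (which equals $\operatorname{ran}\wV^{(1)}$) are $V^{(i+1)}$-invariant, so $\mathcal K$ in fact reduces each $(\sigma,V^{(i+1)})$; alternatively one may invoke the Observation following Theorem~\ref{SS6}, which rests on \cite[Proposition~4.6]{HS19}. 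The main obstacle, such as it is, is purely organisational: carrying the relabelling of Theorem~\ref{ZZZ2} through consistently and tracking the flip maps $t_{i,j}$ across the identification $\mathcal H\cong\mathcal F(E_1)\ot\mathcal K$; no new estimate or construction enters.
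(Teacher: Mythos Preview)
Your proposal is correct and matches the paper's intended approach: the Observation is stated without proof precisely because both parts are direct consequences of the construction in Theorem~\ref{ZZZ2}, and you have correctly identified how. In particular, the invariance of $\mathcal K$ under each $V^{(i+1)}$ (needed so that $\Theta_i$ lands in $B(\mathcal K)$) is already noted in the proof of Theorem~\ref{ZZZ2} (``$\mathcal K$ is invariant for $(\sigma,V^{(k+j)})$'' and later ``in fact $\mathcal K$ is reducing''), and your doubly-commuting computation and your citation of the Observation after Theorem~\ref{SS6} both recover this fact.
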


The Beurling-Lax-Halmos theorem plays an important role to study invariant and doubly commuting invariant subspaces. The next theorem is motivated from theorems \ref{th1},\ref{DPSS3} and \ref{WWWW8}, which is a characterization of invariant subspaces for doubly commuting pure isometric representation. Throughout this paper, we will use $\mathcal{F}({E}_1)\ot \mathcal{K}$ defined in the previous observation unless otherwise stated.

\begin{theorem}\label{A6}
	Suppose that $(\sigma , V^{(1)},\dots,V^{(k+1)})$ is a doubly commuting pure isometric representation of $\mathbb{E}$  on $\mathcal{H}.$ Let $\mathcal{M}$ be a closed subspace of $\mathcal{F}({E}_1)\ot \mathcal{K}.$ Then $\mathcal{M}$ is invariant  for $(\rho,S,M_{\Theta_1},\dots,M_{\Theta_k})$ of ${\mathbb{E}}$ on $\mathcal{F}({E}_1)\ot \mathcal{K}$ if and only if there exist a representation of $\mathcal{B}$ acting on a Hilbert space $\mathcal{W},$ a pure isometric representation  $(\varrho',S^{\mathcal{W}},M_{\Phi_1},\dots,M_{\Phi_k})$ of ${\mathbb{E}}$ on  $\mathcal{F}({E}_1)\ot \mathcal{W}$ and an inner operator  $\Psi :\mathcal{W}\to  \mathcal{K}$ $($for the representations $(\varrho',S^{\mathcal{W}},M_{\Phi_1},\dots,M_{\Phi_k})$ and $(\rho,S,M_{\Theta_1},\dots,M_{\Theta_k}))$ such that $$\mathcal{M}=M_{\Psi}(\mathcal{F}({E}_1)\ot \mathcal{W}),$$ where
	$\Phi_i(\xi_i)=\sum_{n \in \mathbb{N}_0}\widetilde{S}_n^{\mathcal{W}}(I_{E_1^{\otimes n}} \otimes P_{\mathcal{W}}P_{\mathcal{M}})\widetilde{S}_{n}^{*}M_{\Theta_i}(\xi_i)|_{\mathcal{W}}, ~\xi_i \in E_{i+1}, i\in I_{k}.$
\end{theorem}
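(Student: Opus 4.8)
The plan is to follow the classical Beurling--Lax--Halmos strategy, now executed inside the Fock module $\mathcal{F}(E_1)\ot\mathcal{K}$ and with the extra coordinates handled by the commutant theorem of Section 2. For the forward direction, assume $\mathcal{M}$ is invariant for $(\rho, S, M_{\Theta_1},\dots,M_{\Theta_k})$. The key first step is to restrict the representation to $\mathcal{M}$: since $\mathcal{M}$ is $S(\xi_1)$-invariant for all $\xi_1\in E_1$ and $\sigma$-invariant (here $\sigma$ is the relevant representation of $\mathcal{B}$, i.e. $\rho$), the pair $(\rho|_{\mathcal{M}}, S|_{\mathcal{M}})$ is an isometric representation of $E_1$ on $\mathcal{M}$; because the ambient $(\rho,S)$ is pure (being an induced representation), so is its restriction. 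Apply the Wold--von Neumann decomposition (Theorem \ref{WWWW8} / \cite{MS99}) to $(\rho,S)|_{\mathcal{M}}$: this produces a Hilbert space $\mathcal{W}=\mathcal{M}\ominus\overline{\mathrm{span}}\,\{S(\xi_1)\mathcal{M}:\xi_1\in E_1\}$, a representation $\varrho$ of $\mathcal{B}$ on $\mathcal{W}$ (namely $\rho|_{\mathcal{W}}$, using that $\mathcal{W}$ is $\rho(\mathcal{B})$-reducing inside $\mathcal{M}$ — here one must check $\mathcal{W}$ is in fact $\rho(\mathcal{B})$-invariant, which follows since $\rho(b)$ commutes with each $S(\xi_1)$ and preserves $\mathcal{M}$), and an inner operator $\Psi:\mathcal{W}\to\mathcal{K}$ — more precisely $\Psi:=$ the inclusion of the wandering subspace followed by the canonical identification $\mathcal{F}(E_1)\ot\mathcal{W}\cong\mathcal{M}\subseteq\mathcal{F}(E_1)\ot\mathcal{K}$ — with $\mathcal{M}=M_{\Psi}(\mathcal{F}(E_1)\ot\mathcal{W})$.

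The second step is to transport the remaining operators $M_{\Theta_1},\dots,M_{\Theta_k}$. Because each $M_{\Theta_i}$ maps $\mathcal{M}$ into $\mathcal{M}$ and doubly commutes with $S$, the compression $V^{\mathcal{W}}_i(\xi_i):=\Psi^* M_{\Theta_i}(\xi_i)\Psi$ (equivalently, transporting $P_{\mathcal{M}}M_{\Theta_i}(\xi_i)|_{\mathcal{M}}$ through the unitary $M_{\Psi}$) defines a c.b.c. representation of $E_{i+1}$ on $\mathcal{F}(E_1)\ot\mathcal{W}$. Here I invoke Theorem \ref{SS6} with $k$ replaced by $1$: the pair $(\varrho', V^{\mathcal{W}}_i)$ obtained by conjugating by the Wold decomposition unitary $\Pi$ of $(\varrho,S^{\mathcal{W}})$ is a c.b.c. representation of $E_{i+1}$ on $\mathcal{F}(E_1)\ot\mathcal{W}$, hence equals $M_{\Phi_i}$ for a completely bounded bi-module map $\Phi_i:E_{i+1}\to B(\mathcal{W},\mathcal{F}(E_1)\ot\mathcal{W})$, and the moreover-clause of Theorem \ref{SS6} gives exactly the stated formula
$$\Phi_i(\xi_i)=\sum_{n\in\mathbb{N}_0}\widetilde{S}^{\mathcal{W}}_n(I_{E_1^{\ot n}}\ot P_{\mathcal{W}}P_{\mathcal{M}})\widetilde{S}^*_n\, M_{\Theta_i}(\xi_i)|_{\mathcal{W}},$$
once one identifies $P_{\mathcal{W}_{\mathcal{H}}}$ in that statement with $P_{\mathcal{W}}P_{\mathcal{M}}$ under the identifications in force. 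One then checks that $(\varrho',S^{\mathcal{W}},M_{\Phi_1},\dots,M_{\Phi_k})$ is again a pure isometric representation of $\mathbb{E}$ (purity and the isometric property of each $M_{\Phi_i}$ descend from those of the $M_{\Theta_i}$ via the isometry $M_{\Psi}$, as in Observation after Theorem \ref{SS6}), and that $\Psi$ intertwines: $M_{\Theta_i}(\xi_i)M_{\Psi}=M_{\Psi}M_{\Phi_i}(\xi_i)$ by construction, so $M_{\Psi}$ is the multi-analytic operator for the two $\mathbb{E}$-representations and $\Psi$ is inner in the sense of Remark (3)--(4).

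The converse direction is the easy one: given the data $(\mathcal{W},\varrho',S^{\mathcal{W}},M_{\Phi_i},\Psi)$ with $\Psi$ inner, the operator $M_{\Psi}:\mathcal{F}(E_1)\ot\mathcal{W}\to\mathcal{F}(E_1)\ot\mathcal{K}$ is an isometric multi-analytic map, so its range $\mathcal{M}=M_{\Psi}(\mathcal{F}(E_1)\ot\mathcal{W})$ is closed and, by the intertwining relations $S(\xi_1)M_{\Psi}=M_{\Psi}S^{\mathcal{W}}(\xi_1)$ and $M_{\Theta_i}(\xi_i)M_{\Psi}=M_{\Psi}M_{\Phi_i}(\xi_i)$ together with $\rho(b)M_{\Psi}=M_{\Psi}\varrho'(b)$, it is invariant for $(\rho,S,M_{\Theta_1},\dots,M_{\Theta_k})$. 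I expect the main obstacle to be bookkeeping rather than conceptual: one must verify carefully that the wandering subspace $\mathcal{W}$ produced by the Wold decomposition of $(\rho,S)|_{\mathcal{M}}$ is genuinely reducing for $\varrho=\rho|_{\mathcal{M}}$ (so that Theorem \ref{SS6} applies with $\pi=\sigma|_{\mathcal{W}}$), and that the double-commutation relations among $S,M_{\Theta_1},\dots,M_{\Theta_k}$ really do pass to the compressions on $\mathcal{M}$ — this last point uses that $\mathcal{M}$ is invariant (not reducing), so one needs the standard fact that for doubly commuting isometries an invariant subspace for one is reducing for the compression structure of the others in the appropriate sense, exactly as in \cite{MMSS19}. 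Matching the explicit Fourier-coefficient formula for $\Phi_i$ with the abstract output of Theorem \ref{SS6} is then a direct substitution.
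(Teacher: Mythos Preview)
Your proposal is correct and follows essentially the same route as the paper: restrict to $\mathcal{M}$, take the Wold--von Neumann decomposition of $(\rho,S)|_{\mathcal{M}}$ to produce $\mathcal{W}$ and the inner $\Psi$ (the paper packages the multi-analytic map as $\Pi_{\mathcal{M}}=i_{\mathcal{M}}\Pi_T^*$, which is your $M_\Psi$), then apply Theorem~\ref{SS6} with $k=1$ to each $M_{\Theta_i}|_{\mathcal{M}}$ to obtain the $\Phi_i$'s and the intertwining relations~(\ref{A3})--(\ref{A4}). Your closing worry about double commutation passing to the restriction is unnecessary---since Theorem~\ref{SS6} is invoked with $k=1$ the doubly-commuting hypothesis on the first block is vacuous, and only the product-system relation~(\ref{rep}) for $(S|_{\mathcal{M}}, M_{\Theta_i}|_{\mathcal{M}})$ is required, which follows directly from invariance of $\mathcal{M}$.
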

\begin{proof}
	Let $\mathcal{M}$ be a invariant subspace for $(\rho,S,M_{\Theta_1},\dots,M_{\Theta_k}),$ then the  isometric representation  define by  $(\varrho , T,T^{(1)},\dots,T^{(k)}):=(\rho,S,M_{\Theta_1},\dots,M_{\Theta_k})|_{\mathcal{M}}$ of ${\mathbb{E}}$ on  $\mathcal{M}$ is  pure. Since $\mathcal{W}=\mathcal{M}\ominus \widetilde{T}(E_1\ot \mathcal{M})$ is $\varrho$-invariant, in fact $\mathcal{W}$ is a generating wandering subspace for $(\varrho,T),$ consider  the induced representation  $(\varrho',S^{\mathcal{W}})$   of $E_1$ induced by $\varrho|_{\mathcal{W}}.$
	Let $\Pi_T:\mathcal{M}\to \mathcal{F}(E_1)\ot \mathcal{W}$ be the Wold-von Neumann decomposition of $(\varrho,T),$  that is, $\Pi_T$ is unitary and  satisfies
	\begin{equation}\label{WD}
		\Pi_{T}\varrho(b)= \varrho'(b)\Pi_{T}  \quad and \quad	\Pi_{T}\widetilde{T}=\widetilde{S}^{\mathcal{W}}(I_{E_1 }\ot \Pi_{T}), \quad b\in \mathcal{B}.
	\end{equation} For $i \in I_k$ and since $(\varrho,T)$ is pure isometric, apply Theorem \ref{SS6} $(k=1$ case$)$ for the representation $(\varrho,T,T^{(i)}),$ there exist isometric bi-module maps $\Phi_i:E_{i+1}\to B(\mathcal{W},\mathcal{F}({E}_1)\ot \mathcal{W})$ such that
	\begin{equation*}
		\Pi_{T}\widetilde{T}^{(i)}=\widetilde{M}_{\Phi_i}(I_{E_{i+1}}\ot \Pi_{T}),
	\end{equation*}
	where $\Phi_i(\xi_i)=\sum_{n \in \mathbb{N}_0}\widetilde{S}_n^{\mathcal{W}}(I_{E_1^{\otimes n}} \otimes P_{\mathcal{W}})\widetilde{T}_{n}^{*}T^{(i)}(\xi_i)|_{\mathcal{W}}, ~\xi_i \in E_{i+1}.$ Thus,  $(\varrho',M_{\Phi_1},\dots,M_{\Phi_k})$ is a pure isometric representation of a product system over $\mathbb{N}_0^k$ determined by $\{E_2,\dots, E_{k+1}\}$ on $\mathcal{F}({E}_1)\ot \mathcal{W}$ and by Lemma \ref{SS2} $(\varrho',S^{\mathcal{W}},M_{\Phi_1},\dots,M_{\Phi_k})$ is also a pure isometric representation of $\mathbb{E}$ on $\mathcal{F}({E}_1)\ot \mathcal{W}.$ Let $i_{\mathcal{M}}$ be the inclusion map from  ${\mathcal{M}}$ to $\mathcal{F}({E}_1)\ot \mathcal{K}.$ Define an isometry $\Pi_{\mathcal{M}}: \mathcal{F}(E_1)\ot \mathcal{W}\to \mathcal{F}({E}_1)\ot \mathcal{K}$ by 
	\begin{align*}\label{A7}
		\Pi_{\mathcal{M}}=i_{\mathcal{M}}\Pi_{T}^*,
	\end{align*} then  $\Pi_{\mathcal{M}}\Pi_{\mathcal{M}}^*=i_{\mathcal{M}}i_{\mathcal{M}}^*$ and  range of $\Pi_{\mathcal{M}}={\mathcal{M}}.$ Since  $\mathcal{M}$ is  invariant for $(\rho,S,M_{\Theta_1},\dots,M_{\Theta_k})$ and by Equation (\ref{WD}), 
	we have
	\begin{equation}\label{A3}
		\Pi_{\mathcal{M}}\varrho'(b)=\rho(b)\Pi_{\mathcal{M}}, \quad 	\Pi_{\mathcal{M}}\widetilde{S}^{\mathcal{W}}=i_{\mathcal{M}}\wT(I_{E_1}\ot \Pi_{T}^*)=\widetilde{S}(I_{E_1}\ot \Pi_{\mathcal{M}})
	\end{equation} and \begin{equation}\label{A4}
		\Pi_{\mathcal{M}}\widetilde{M}_{\Phi_i}=i_{\mathcal{M}}\wT^{(i)}(I_{E_{i+1}}\ot \Pi_{T}^*)=\widetilde{M}_{\Theta_i}(I_{E_{i+1}}\ot \Pi_{\mathcal{M}}).
	\end{equation} Then $\Pi_{\mathcal{M}}$ is an isometric multi-analytic operator and  by  the discussion of mutli-analytic operator, there exists an inner operator $\Psi:\mathcal{W}\to  \mathcal{K}$ such that  $\Pi_{\mathcal{M}}=M_{\Psi}.$ This shows that   $$\mathcal{M}=\Pi_{\mathcal{M}}(\mathcal{F}(E_1)\ot \mathcal{W})=M_{\Psi}(\mathcal{F}(E_1)\ot \mathcal{W}).$$ The converse part  follows from the definition of  multi-analytic operator.
\end{proof}

\begin{remark}\label{A91}
	Let  $\mathcal{M} \subseteq \mathcal{F}({E}_1)\ot \mathcal{K}$ be an invariant subspace for $(\rho,S,M_{\Theta_1},\dots,M_{\Theta_k})$ of ${\mathbb{E}}.$   In particular $\mathcal{M}$ is invariant  for $(\rho,S),$ by Theorem \ref{WWWW8}, therefore there exist a $\rho$-invariant subspace  $\mathcal{W}$ and an inner operator $\Psi:\mathcal{W}\to \mathcal{K}$ (for the representations $(\varrho',S^{\mathcal{W}})$ and $(\rho,S)$) such that $$\mathcal{M}=M_{\Psi}(\mathcal{F}({E}_1)\ot \mathcal{W}),$$ where  $(\varrho',S^{\mathcal{W}})$ is the induced representation  induced by $\rho|_{\mathcal{W}}.$ In fact $\mathcal{W}=\mathcal{M}\ominus \widetilde{S}(E_1\ot \mathcal{M}).$ Note that, for each $i \in I_k,$ $\wt M_{\Theta_i}({E_{i+1}} \ot \mathcal{M})=\wt M_{\Theta_i}(I_{E_{i+1}} \ot M_{\Psi})(E_{i+1} \otimes \mathcal{F}({E}_1)\ot \mathcal{W} )  \subseteq M_{\Psi}(\mathcal{F}({E}_1)\ot \mathcal{W}).$ Let $\zeta_i \in E_{i+1} \otimes \mathcal{F}({E}_1)\ot \mathcal{W}, \wt M_{\Theta_i}(I_{E_{i+1}} \ot M_{\Psi})(\zeta_i) \in   range (M_{\Psi}) $ and  thus there exists a unique $\eta_i \in \mbox{ker}M_{\Psi}^{\bot}$ such that $\wt M_{\Theta_i}(I_{E_{i+1}} \ot M_{\Psi})(\zeta_i)=M_{\Psi}(\eta_i).$ It follows from the above, for $i \in I_k$ define the linear maps $\widetilde{X}^{(i)}: E_{i+1} \otimes \mathcal{F}({E}_1)\ot \mathcal{W} \rightarrow  \mathcal{F}({E}_1)\ot \mathcal{W} $ by $\widetilde{X}^{(i)}(\zeta_i)=\eta_i.$ Then it is easy to observe that $\widetilde{X}^{(i)}$ is the  bounded module map, that is, $\widetilde{X}^{(i)}(\phi_{i+1}(a) \ot I_{\mathcal{F}({E}_1)\ot \mathcal{W}})=\varrho'(a)\widetilde{X}^{(i)}, a \in \mathcal{B},$ where $\phi_{i+1}$ is a left action on $E_{i+1},$ and it satisfies 
	\begin{equation}\label{A33}
		\wt M_{\Theta_i}(I_{E_{i+1}} \ot M_{\Psi})=M_{\Psi}\widetilde{X}^{(i)}.
	\end{equation}
	Therefore by using \cite[Lemma 3.5]{MS98}, we can define a completely bounded covariant representation  $(\varrho', X^{(i)})$ of $E_{i+1}$ on   $\mathcal{F}({E}_1)\ot \mathcal{W}$ by $X^{(i)}(\xi_i) h=\wt X^{(i)}(\xi_i \ot h),  \xi_i\in E_{i+1},h\in \mathcal{F}({E}_1)\ot \mathcal{W},i\in I_k.$ Moreover, by using Equation (\ref{A33}), one can varify that  $(\varrho',X^{(1)},\dots,X^{(k)})$ is a pure isometric representation of the product system  determined by $\{E_2,\dots, E_{k+1}\}$ that satisfies \begin{align*}
		&\widetilde{X}^{(i)}(I_{E_{i+1}}\ot \widetilde{S}^{\mathcal{W}})=M_{\Psi}^*\widetilde{M}_{\Theta_i}(I_{E_{i+1}}\ot M_{\Psi}\widetilde{S}^{\mathcal{W}})\\&=M_{\Psi}^*\widetilde{M}_{\Theta_i}(I_{E_{i+1}}\ot\widetilde{S}(I_{E_1}\ot M_{\Psi}))=M_{\Psi}^*\widetilde{S}(I_{E_1}\ot \widetilde{M}_{\Theta_i})(t_{i+1,1}\ot M_{\Psi})\\&=\widetilde{S}(I_{E_1}\ot \widetilde{X}^{(i)})(t_{i+1,1}\ot I_{\mathcal{F}({E}_1)\ot \mathcal{W}}).
	\end{align*}
	This shows that $(\varrho', S^{\mathcal{W}},X^{(1)},\dots,X^{(k)})$  is the pure isometric representaion and by Lemma \ref{SS2} (for $k=1$ case), there exist isometric bi-module maps $\psi_i: E_{i+1} \to B(\mathcal{W}, \mathcal{F}(E_1) \otimes \mathcal{W}) $ such that $X^{(i)}=M_{\psi_i}.$  Finally, by comparing  Equations (\ref{A4}) and (\ref{A33}), we obtain $\psi_i=\phi_i, $ for all $i \in I_k.$
\end{remark}

\begin{remark}\label{A92}
	Let $\mathcal{M}$ be a closed invariant subspace for $(\rho,S,M_{\Theta_1},\dots,M_{\Theta_k})$ of ${\mathbb{E}}$ on $\mathcal{F}({E}_1)\ot \mathcal{K}.$ Let $\mathcal{W},\Psi$ and  $\Phi_i:E_{i+1}\to B(\mathcal{W},\mathcal{F}({E}_1)\ot \mathcal{W})$ be as in Theorem \ref{A6}. Since $P_{\mathcal{M}}=M_{\Psi}M_{\Psi}^*,$ for $n \in \mathbb{N}, \wT_n^*=(I_{E_1^{\ot n}}\ot P_{\mathcal{M}})\widetilde{S}_{n}^{*}=(I_{E_1^{\ot n}}\ot M_{\Psi})\widetilde{S}_{n}^{\mathcal{W}^*}M_{\Psi}^*$ so that $\Phi_i$ can be written as $$\Phi_i(\xi_i)=\sum_{n \in \mathbb{N}_0}\widetilde{S}_{n}^{\mathcal{W}}(I_{E_1^{\otimes n}} \otimes P_{\mathcal{W}}M_{\Psi})\widetilde{S}_{n}^{\mathcal{W}^*}M_{\Psi}^*M_{\Theta_i}(\xi_i)|_{\mathcal{W}}, ~\xi_i \in E_{i+1}, i\in I_k.$$
\end{remark}

\begin{remark}\label{A93}
	Let $\mathcal{M}$ be an invariant subspace for a pure isometric representaion $(\sigma, V)$ of $E$ on  $ \mathcal{H},$ then  by  Theorem \ref{WWWW8} there exist a Hilbert space $\mathcal{W}$, a representaion of $\mathcal{B}$ on $\mathcal{W}$  and  an inner operator $\Psi :\mathcal{W}\to \mathcal{H}$ such that $\mathcal{M}=M_{\Psi}(\mathcal{F}({E}_1)\ot \mathcal{W}),$ where $\mathcal{W}=\mathcal{M}\ominus \widetilde{S}(E_1\ot \mathcal{M}).$ Then $(\rho, S)$  is isomorphic to $(\sigma, V)|_{\mathcal{M}}.$ This result builds the stage for an analogous outcome in the following way: 
	Suppose that  $\mathcal{M}$ is  an invariant subspace for $(\rho,S,M_{\Theta_1},\dots,M_{\Theta_k})$ on $\mathcal{F}({E}_1)\ot \mathcal{K},$ by  Theorem \ref{A6}, $M_{\Psi} :\mathcal{F}({E}_1)\ot \mathcal{W}\to \mathcal{M}(=M_{\Psi}(\mathcal{F}({E}_1)\ot \mathcal{W}))$  is unitary. It is easy to observe that  $M_{\Psi}$  which restricted to range of $M_{\Psi}$ is also intertwine the representaions  $(\rho,S,M_{\Theta_1},\dots,M_{\Theta_k})|_{\mathcal{M}}$ and $(\varrho',S^{\mathcal{W}} ,M_{\Phi_1},\dots,M_{\Phi_k}).$ This shows that $(\rho,S,M_{\Theta_1},\dots,M_{\Theta_k})|_{\mathcal{M}}$  and  $(\varrho',S^{\mathcal{W}} ,M_{\Phi_1},\dots,M_{\Phi_k})$ are isomorphic. Let $\mathcal{N}$ be an invariant subspace for a doubly commuting pure isometric  representation $(\sigma , V^{(1)},\dots,V^{(k+1)})$ of $\mathbb{E}$ on  $\mathcal{H}. $ By Theorem \ref{ZZZ2}, there exists a unitary operator $U: \mathcal{H} \to \mathcal{F}({E}_1)\ot \mathcal{K}$  such that $U$ intertwine  the representaions $(\sigma , V^{(1)},\dots,V^{(k+1)})$ and $(\rho,S,M_{\Theta_1},\dots,M_{\Theta_k}).$ Define  $\mathcal{M}'=U(\mathcal{N}),$  then $\mathcal{M}'$ is invariant for $(\rho,S,M_{\Theta_1},\dots,M_{\Theta_k})$ and by the above $(\sigma , V^{(1)},\dots,V^{(k+1)})|_{\mathcal{N}}$ is isomorphic to $(\varrho',S^{\mathcal{W}} ,M_{\Phi_1},\dots,M_{\Phi_k}).$ 
\end{remark}

\begin{remark}(Uniqueness of Theorem \ref{A6}). Let $\pi$ be a representation of $\mathcal{B}$ acting on a Hilbert space $\mathcal{W}'$ and $(\varrho'',S^{\mathcal{W}'}, T^{(1)},\dots, T^{(k)})$ be a pure isometric representaion of $\mathbb{E}$ on $\mathcal{F}({E}_1)\ot \mathcal{W}'.$ By Lemma \ref{SS2}, for each $i \in I_k,$ there exist isometric bi-module maps $\Phi_{i}': E_{i+1} \to B(\mathcal{W}', \mathcal{F}(E_1) \otimes \mathcal{W}') $ such that $T^{(i)}=M_{\Phi_{i}'}.$ 
	In the setting of Theorem \ref{A6}, let $\Psi':\mathcal{W}'\to \mathcal{K}$ be an inner operator for the representations $(\varrho'',S^{\mathcal{W}'},M_{\Phi_{1}'},\dots,M_{\Phi_{k}'})$ and $(\rho,S,M_{\Theta_1},\dots,M_{\Theta_k})$ such that $\mathcal{M}=M_{\Psi'}(\mathcal{F}({E}_1)\ot \mathcal{W}').$  Define a bounded operator  $U:\mathcal{F}({E}_1)\ot \mathcal{W} \to \mathcal{F}({E}_1)\ot \mathcal{W}'$ by $$U=M_{\Psi'}^*M_{\Psi}.$$
	Since $M_{\Psi}(\mathcal{F}({E}_1)\ot \mathcal{W})=M_{\Psi'}(\mathcal{F}({E}_1)\ot \mathcal{W}'),$ $U$ is the unitary operator  that satisfies  $M_{\Psi}=M_{\Psi'}U.$ Moreover, $U$ intertwine the isometric covariant  representations   $(\varrho',S^{\mathcal{W}},M_{\Phi_1},\dots,M_{\Phi_k})$  and $(\varrho'',S^{\mathcal{W}'},M_{\Phi_{1}'},\dots,M_{\Phi_{k}'}).$  Indeed, from Equation (\ref{A4}) and  the definition  of $\Psi',$   \begin{align*}
		M_{\Psi'}U\widetilde{M}_{\Phi_{i}}&=M_{\Psi}\widetilde{M}_{\Phi_{i}}=\widetilde{M}_{\Theta_i}(I_{E_{i+1}}\ot M_{\Psi})\\&=\widetilde{M}_{\Theta_i}(I_{E_{i+1}}\ot M_{\Psi'}U)=M_{\Psi'}\widetilde{M}_{\Phi_{i}'}
		(I_{E_{i+1}}\ot U).
	\end{align*} But $M_{\Psi'}$ is an isometry, we have $U\widetilde{M}_{\Phi_{i}}=\widetilde{M}_{\Phi_{i}'}
	(I_{E_{i+1}}\ot U).$ Similarly, we prove  $U\varrho'(b)=\varrho''(b)U$ and $U\widetilde{S}^{\mathcal{W}}=\widetilde{S}^{\mathcal{W}'}(I_{E_{1}}\ot U).$
\end{remark}
Let $\mathcal{N}$ be a doubly commuting invariant subspace for a completely bounded covariant representation $(\sigma , V^{(1)},\dots,V^{(k)})$ of $\mathbb{E}$ on a Hilbert sapce $\mathcal{H},$ that is, $\mathcal{N}$ is   invariant  and  $(\sigma , V^{(1)},\dots,V^{(k)})|_{\mathcal{N}}$ is  doubly commuting.  In \cite[Theorem 4.11]{TV21} completely characterize the doubly commuting subspaces. However, we prove the same result in a different approach here: If $\mathcal{M} \subseteq \mathcal{F}({E}_1)\ot \mathcal{K}$   is a doubly commuting invariant subspace for $(\rho,S,M_{\Theta_1},\dots,M_{\Theta_k}),$  then   $(\varrho',S^{\mathcal{W}},M_{\Phi_1},\dots,M_{\Phi_k})$  be defined as in the proof of  Theorem \ref{A6} is  also  doubly commuting. In adddition to this and Theorem \ref{ZZZ2},  the 
following corollary    analyzes the doubly commuting invariant subspaces in the context of Theorem \ref{A6}.
\begin{corollary}
	Let $\mathcal{N}$ be an invariant subspace for a pure isometric covariant representation $(\sigma , V^{(1)},\dots,V^{(k+1)})$  of $\mathbb{E}$ on a Hilbert sapce $\mathcal{H}.$
	Then $\mathcal{N}$ is doubly commuting  for $(\sigma , V^{(1)},\dots,V^{(k+1)})$ if and only if there exist a representation of $\mathcal{B}$ acting on a Hilbert space $\mathcal{W},$ a doubly commuting pure isometric representation  $(\varrho',S^{\mathcal{W}},M_{\Phi_1},\dots,M_{\Phi_k})$ of ${\mathbb{E}}$ on $\mathcal{F}({E}_1)\ot \mathcal{W}$ and an inner operator  $\Psi :\mathcal{W}\to  \mathcal{H}$ $($for the representations $(\varrho',S^{\mathcal{W}},M_{\Phi_1},\dots,M_{\Phi_k})$ and $(\sigma , V^{(1)},\dots,V^{(k+1)})$ such that $$\mathcal{N}=M_{\Psi}(\mathcal{F}({E}_1)\ot \mathcal{W}).$$
\end{corollary}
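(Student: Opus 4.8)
The plan is to reduce the statement to the two ingredients that have just been assembled: the structure theorem for invariant subspaces (Theorem \ref{A6}) and the transport of doubly commuting invariant subspaces along the isomorphism constructed in Theorem \ref{ZZZ2}. First I would invoke Theorem \ref{ZZZ2} to fix a unitary $U:\mathcal{H}\to\mathcal{F}(E_1)\ot\mathcal{K}$ intertwining $(\sigma,V^{(1)},\dots,V^{(k+1)})$ with $(\rho,S,M_{\Theta_1},\dots,M_{\Theta_k})$, and set $\mathcal{M}=U(\mathcal{N})$, so that $\mathcal{N}$ is invariant and doubly commuting for $(\sigma,V^{(1)},\dots,V^{(k+1)})$ if and only if $\mathcal{M}$ is invariant and doubly commuting for $(\rho,S,M_{\Theta_1},\dots,M_{\Theta_k})$; the inner operator and auxiliary representations then transfer verbatim by composing with $U$ and $U^*$. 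So it suffices to prove the corollary in the concrete model, i.e.\ with $\mathcal{H}$ replaced by $\mathcal{F}(E_1)\ot\mathcal{K}$, which is the promise made in the paragraph preceding the statement.

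For the forward direction, assume $\mathcal{M}\subseteq\mathcal{F}(E_1)\ot\mathcal{K}$ is doubly commuting invariant. Apply Theorem \ref{A6} to obtain the representation of $\mathcal{B}$ on $\mathcal{W}=\mathcal{M}\ominus\widetilde{S}(E_1\ot\mathcal{M})$, the pure isometric representation $(\varrho',S^{\mathcal{W}},M_{\Phi_1},\dots,M_{\Phi_k})$ on $\mathcal{F}(E_1)\ot\mathcal{W}$, and the inner operator $\Psi:\mathcal{W}\to\mathcal{K}$ with $\mathcal{M}=M_{\Psi}(\mathcal{F}(E_1)\ot\mathcal{W})$. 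The only thing left to check is that $(\varrho',S^{\mathcal{W}},M_{\Phi_1},\dots,M_{\Phi_k})$ is doubly commuting. By Remark \ref{A93} this representation is isomorphic to the restriction $(\rho,S,M_{\Theta_1},\dots,M_{\Theta_k})|_{\mathcal{M}}$ via the unitary $M_{\Psi}:\mathcal{F}(E_1)\ot\mathcal{W}\to\mathcal{M}$, and the restriction is doubly commuting by hypothesis; since the doubly commuting identities from the corresponding definition are preserved under conjugation by a unitary intertwiner (each factor $\widetilde{S}^{\mathcal{W}}$, $\widetilde{M}_{\Phi_i}$ is carried to the corresponding factor for the restricted representation), the conclusion follows. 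Conversely, if such $\mathcal{W}$, $(\varrho',S^{\mathcal{W}},M_{\Phi_1},\dots,M_{\Phi_k})$ doubly commuting, and inner $\Psi$ are given with $\mathcal{N}=M_{\Psi}(\mathcal{F}(E_1)\ot\mathcal{W})$, then $\mathcal{N}$ is invariant because $M_{\Psi}$ is a multi-analytic operator (so its range is invariant), and $(\sigma,V^{(1)},\dots,V^{(k+1)})|_{\mathcal{N}}$ is isomorphic via the unitary $M_{\Psi}$ to $(\varrho',S^{\mathcal{W}},M_{\Phi_1},\dots,M_{\Phi_k})$, hence doubly commuting.

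The step I expect to carry the most weight is the forward implication's claim that $(\varrho',S^{\mathcal{W}},M_{\Phi_1},\dots,M_{\Phi_k})$ inherits double commutativity. This is exactly the assertion flagged in the unnumbered paragraph before the corollary ("If $\mathcal{M}$ is a doubly commuting invariant subspace \ldots then $(\varrho',S^{\mathcal{W}},M_{\Phi_1},\dots,M_{\Phi_k})$ \ldots is also doubly commuting"), so I would either cite that discussion directly or spell out the one-line computation: for $i\ne j$ in $\{1,\dots,k+1\}$ (treating $S^{\mathcal{W}}$ as the $i=1$ slot), conjugate the identity $\widetilde{T}^{(i)^*}\widetilde{T}^{(j)}=(I_{E_i}\ot\widetilde{T}^{(j)})(t_{j,i}\ot I)(I_{E_j}\ot\widetilde{T}^{(i)^*})$ for the restriction $(\rho,S,M_{\Theta_1},\dots,M_{\Theta_k})|_{\mathcal{M}}$ by $M_{\Psi}$ and use $M_{\Psi}\widetilde{M}_{\Phi_i}=\widetilde{M}_{\Theta_i}(I_{E_{i+1}}\ot M_{\Psi})$ together with $M_{\Psi}^*M_{\Psi}=I$. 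Everything else—transport via $U$, invariance of the range of a multi-analytic operator, isomorphism of restrictions—is routine and already packaged in Theorems \ref{ZZZ2}, \ref{A6} and Remark \ref{A93}.
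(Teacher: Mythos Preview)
Your proposal is correct and follows essentially the same route as the paper: the paper's ``proof'' is the discussion paragraph immediately preceding the corollary, which says exactly that one transports to the model $\mathcal{F}(E_1)\ot\mathcal{K}$ via Theorem \ref{ZZZ2}, applies Theorem \ref{A6}, and observes that the representation $(\varrho',S^{\mathcal{W}},M_{\Phi_1},\dots,M_{\Phi_k})$ inherits double commutativity from the restriction (which is the content of Remark \ref{A93} and the paragraph you cite). Your treatment of the converse direction and the transfer of the inner operator back to $\mathcal{H}$ via $U^*$ is slightly more explicit than the paper's, but the argument is the same.
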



Let $\mathcal{M}$ be an invariant subspace for $(\rho,S)$ of $E_1$ on $\mathcal{F}({E}_1)\ot \mathcal{K},$ from Theorem \ref{WWWW8} there exists an inner operator $\Psi :\mathcal{W}\to \mathcal{K}$ such that $\mathcal{M}=M_{\Psi}(\mathcal{F}({E}_1)\ot \mathcal{W}),$ where $\mathcal{W}=\mathcal{M}\ominus \widetilde{S}(E_1\ot \mathcal{M}).$ The corresponding question is that what additional conditions on $\Psi$ is $\mathcal{M}$ also a invariant subspace for $(\rho,M_{\Theta_1},\dots,M_{\Theta_k})?$ We get the answer of this question by reformulating Remark \ref{A92} and Theorem \ref{A6}.

\begin{theorem}\label{AAAA}
	Let $\mathcal{M}$ be a closed invariant subspace for $(\rho,S)$ of $E_1$ on $\mathcal{F}({E}_1)\ot \mathcal{K}$ and $\Psi:\mathcal{W}\to \mathcal{K}$  be an inner operator  for the representations $(\varrho',S^{\mathcal{W}})$ and $(\rho,S)$ such that $\mathcal{M}=M_{\Psi}(\mathcal{F}({E}_1)\ot \mathcal{W}),$  where $\mathcal{W}=\mathcal{M}\ominus \widetilde{S}(E_1\ot \mathcal{M}).$ Then $\mathcal{M}$ is invariant for $(\rho,M_{\Theta_1},\dots,M_{\Theta_k})$ if and only if there exist a pure isometric representation $(\varrho',M_{\Phi_1},\dots,M_{\Phi_k})$ of the product system  determined by $\{E_2,\dots, E_{k+1}\}$ on $\mathcal{F}({E}_1)\ot \mathcal{W}$ such that $$M_{\Psi}\varrho'(b)=\rho(b)M_{\Psi} \quad and \quad M_{\Psi}\widetilde{M}_{\Phi_i}=\widetilde{M}_{\Theta_i}(I_{E_{i+1}}\ot M_{\Psi}), \quad \quad i\in I_k,$$ where $\Phi_i(\xi_i)=\sum_{n \in \mathbb{N}_0}\widetilde{S}_{n}^{\mathcal{W}}(I_{E_1^{\otimes n}} \otimes P_{\mathcal{W}}M_{\Psi})\widetilde{S}_{n}^{\mathcal{W}^*}M_{\Psi}^*M_{\Theta_i}(\xi_i)|_{\mathcal{W}}, ~\xi_i \in E_{i+1}, i\in I_k.$ Moreover, the representations $(\rho,S,M_{\Theta_1},\dots,M_{\Theta_k})|_{\mathcal{M}}$ and $(\varrho',S^{\mathcal{W}} ,M_{\Phi_1},\dots,M_{\Phi_k})$ are isomorphic. 
\end{theorem}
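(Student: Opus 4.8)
The plan is to observe that Theorem \ref{AAAA} is essentially a repackaging of the mechanism already developed in Theorem \ref{A6} together with Remark \ref{A92}, now read in the direction where the $(\rho,S)$-invariant subspace $\mathcal{M}$ is given first and one asks for extra structure making it invariant under the remaining representations $M_{\Theta_1},\dots,M_{\Theta_k}$. For the forward implication, I would start by assuming $\mathcal{M}$ is invariant for $(\rho, M_{\Theta_1},\dots, M_{\Theta_k})$ as well; then $\mathcal{M}$ is invariant for the full representation $(\rho, S, M_{\Theta_1},\dots, M_{\Theta_k})$, so the restriction $(\varrho, T, T^{(1)},\dots, T^{(k)}) := (\rho, S, M_{\Theta_1},\dots, M_{\Theta_k})|_{\mathcal{M}}$ is a pure isometric representation of $\mathbb{E}$ on $\mathcal{M}$. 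Since $\mathcal{W} = \mathcal{M} \ominus \widetilde{S}(E_1 \ot \mathcal{M}) = \mathcal{M} \ominus \widetilde{T}(E_1 \ot \mathcal{M})$ is the generating wandering subspace for $(\varrho, T)$ (purity giving the orthogonal decomposition via \cite[Corollary 3.4]{TV21}), the Wold--von Neumann decomposition $\Pi_T : \mathcal{M} \to \mathcal{F}(E_1) \ot \mathcal{W}$ is available, and I would transport each $T^{(i)}$ through $\Pi_T$ and apply Theorem \ref{SS6} in the $k=1$ case to the triple $(\varrho, T, T^{(i)})$. This produces isometric bi-module maps $\Phi_i : E_{i+1} \to B(\mathcal{W}, \mathcal{F}(E_1)\ot\mathcal{W})$ with $\Pi_T \widetilde{T}^{(i)} = \widetilde{M}_{\Phi_i}(I_{E_{i+1}} \ot \Pi_T)$, and the commutation relations among the $T^{(i)}$ (inherited from the $M_{\Theta_i}$) together with Lemma \ref{SS2} upgrade $(\varrho', S^{\mathcal{W}}, M_{\Phi_1}, \dots, M_{\Phi_k})$ to a pure isometric representation of $\mathbb{E}$ on $\mathcal{F}(E_1)\ot\mathcal{W}$. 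Setting $M_{\Psi} := i_{\mathcal{M}} \Pi_T^*$ as in the proof of Theorem \ref{A6}, the intertwining identities $M_{\Psi}\varrho'(b) = \rho(b) M_{\Psi}$ and $M_{\Psi} \widetilde{M}_{\Phi_i} = \widetilde{M}_{\Theta_i}(I_{E_{i+1}} \ot M_{\Psi})$ follow verbatim from Equations (\ref{A3})--(\ref{A4}); note that $\Psi = M_{\Psi}|_{\mathcal{W}}$ is exactly the inner operator of the hypothesis because $M_{\Psi}$ agrees with the Beurling--Lax--Halmos representer already attached to $\mathcal{M}$ as a $(\rho,S)$-invariant subspace (uniqueness of that representer, \cite{TV21}). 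The formula for $\Phi_i$ is then read off from the formula in Theorem \ref{SS6}, and rewritten in the $P_{\mathcal{W}}M_{\Psi}$--form exactly as in Remark \ref{A92}, using $P_{\mathcal{M}} = M_{\Psi}M_{\Psi}^*$ and $\widetilde{T}_n^* = (I_{E_1^{\ot n}} \ot M_{\Psi})\widetilde{S}_n^{\mathcal{W}^*}M_{\Psi}^*$.

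For the converse, suppose such a pure isometric $(\varrho', M_{\Phi_1},\dots,M_{\Phi_k})$ exists on $\mathcal{F}(E_1)\ot\mathcal{W}$ with the two displayed intertwining identities. Then for each $i$ the relation $M_{\Psi}\widetilde{M}_{\Phi_i} = \widetilde{M}_{\Theta_i}(I_{E_{i+1}}\ot M_{\Psi})$ shows that $\widetilde{M}_{\Theta_i}$ maps $E_{i+1}\ot \mathcal{M} = E_{i+1}\ot M_{\Psi}(\mathcal{F}(E_1)\ot\mathcal{W})$ into $\mathrm{range}(M_{\Psi}) = \mathcal{M}$, i.e. $\mathcal{M}$ is invariant under every $M_{\Theta_i}(\xi_i)$, $\xi_i \in E_{i+1}$; combined with the assumed $(\rho,S)$-invariance this gives invariance for $(\rho, M_{\Theta_1},\dots,M_{\Theta_k})$. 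The final isomorphism claim is immediate: $M_{\Psi}$ is an isometry with range $\mathcal{M}$, hence a unitary from $\mathcal{F}(E_1)\ot\mathcal{W}$ onto $\mathcal{M}$, and the two displayed identities say precisely that $M_{\Psi}$ intertwines $(\varrho', S^{\mathcal{W}}, M_{\Phi_1},\dots,M_{\Phi_k})$ with $(\rho, S, M_{\Theta_1},\dots,M_{\Theta_k})|_{\mathcal{M}}$ (the $S^{\mathcal{W}}$-$S$ intertwining being part of the inner-operator hypothesis, Equation (\ref{A3})).

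I expect the only genuinely delicate point to be the \emph{bookkeeping of the two descriptions of $\mathcal{W}$ and $\Psi$}: one must check that the wandering subspace $\mathcal{M}\ominus\widetilde{S}(E_1\ot\mathcal{M})$ arising from the $(\rho,S)$-structure and the wandering subspace $\mathcal{M}\ominus\widetilde{T}(E_1\ot\mathcal{M})$ arising from the restricted $(\varrho,T)$-structure coincide — which they do since $T = S|_{\mathcal{M}}$ — and that the inner operator $\Psi$ produced by the construction is literally the one fixed in the hypothesis, so that the stated formula for $\Phi_i$ (which involves $M_{\Psi}$ explicitly) is the correct one and not merely one valid up to the unitary equivalence recorded in the uniqueness remark. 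Everything else is a direct transcription of the arguments in Theorem \ref{SS6}, Theorem \ref{A6}, and Remark \ref{A92}, with $k$ there specialized to $1$ and the roles of the correspondences relabeled as $\{E_2,\dots,E_{k+1}\}$ over $\mathcal{F}(E_1)$.
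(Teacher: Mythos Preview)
Your proposal is correct and follows precisely the route the paper intends: the paper does not give a separate proof of Theorem \ref{AAAA} but states it as a direct reformulation of Theorem \ref{A6} and Remark \ref{A92}, and your argument spells out exactly that reformulation (forward direction via the restricted representation, Wold decomposition, and Theorem \ref{SS6}; converse via the intertwining identity; isomorphism via Remark \ref{A93}). Your observation about the delicate bookkeeping---that the two descriptions of $\mathcal{W}$ coincide because $T = S|_{\mathcal{M}}$ and that the constructed $\Psi$ agrees with the hypothesized one---is the only point requiring care, and you have handled it correctly.
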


\section{Applications}

In this section, we study some useful results about the structure of invariant subspaces and  the characterization of nested invariant subspaces    for doubly commuting  pure isometric representation of the product system over $\mathbb{N}_0^k$.  In addition, we also verify our important results for the invariant subspaces that are isomorphic to $\mathcal{F}({E}_1)\ot \mathcal{K}.$


First, we  discuss the characterization of nested invariant subspaces for pure doubly commuting isometric representation: 	Let $(\sigma , V^{(1)},\dots,V^{(k+1)})$ be a doubly commuting pure isometric representation of $\mathbb{E}$ acting on the Hilbert space $\mathcal{H}.$ By Theorem \ref{ZZZ2}, $(\sigma , V^{(1)},\dots,V^{(k+1)})$ is isomorphic to a covariant representation $(\rho,S,M_{\Theta_1},\dots,M_{\Theta_k})$ of ${\mathbb{E}}$ on $\mathcal{F}({{E}_{1}})\ot \mathcal{K},$ where  $(\rho,S)$ is the induced representaion  of $E_1$ on $\mathcal{F}({{E}_{1}})\ot \mathcal{K}.$ Therefore,  considering such a representation $(\rho,S,M_{\Theta_1},\dots,M_{\Theta_k}),$ it is sufficient to analyze the nested invariant subspaces.

Let   $\mathcal{M}_1$ and $\mathcal{M}_2$ be   invariant subspaces for $(\rho,S,M_{\Theta_1},\dots,M_{\Theta_k}),$   by Theorem \ref{A6}, there exist an inner operators ${\Psi}_1 : \mathcal{W}_1\to  \mathcal{K}$ (for the representations $(\varrho'_1,S^{\mathcal{W}_1},M_{\Phi_{1,1}},\dots,M_{\Phi_{k,1}})$ and $(\rho,S,M_{\Theta_1},\dots,M_{\Theta_k})$) and ${\Psi}_2 : \mathcal{W}_2\to  \mathcal{K}$ (for the representations $(\varrho'_2,S^{\mathcal{W}_2},M_{\Phi_{1,2}},\dots,M_{\Phi_{k,2}})$ and $(\rho,S,M_{\Theta_1},\dots,M_{\Theta_k})$) such that $$\mathcal{M}_1=M_{\Psi_1}(\mathcal{F}({E}_1)\ot \mathcal{W}_1) \quad and\quad \mathcal{M}_2=M_{\Psi_2}(\mathcal{F}({E}_1)\ot \mathcal{W}_2),$$  where $\Phi_{i,j}(\xi_i)=\sum_{n \in \mathbb{N}_0}\widetilde{S}_n^{\mathcal{W}_j}(I_{E_1^{\otimes n}} \otimes P_{\mathcal{W}_j}P_{\mathcal{M}_j})\widetilde{S}_{n}^{*}M_{\Theta_i}(\xi_i)|_{\mathcal{W}_j}, ~\xi_i \in E_{i+1}, i\in I_k, j\in I_2$ and $ \mathcal{W}_j={\mathcal{M}_j} \ominus \widetilde{S}(E_1 \ot {\mathcal{M}_j}).$ Define a bounded operator $\Pi: \mathcal{F}({E}_1)\ot  \mathcal{W}_1\to \mathcal{F}({E}_1)\ot \mathcal{W}_2$ by
\begin{align*}
	\Pi=M_{\Psi_2}^*M_{\Psi_1}.
\end{align*} From Equations (\ref{A3}) and (\ref{A4}) for $M_{\Psi_1}$ and $M_{\Psi_2},$ we get $$\Pi\varrho'_1(b)=\varrho'_2(b)\Pi, \quad 	\Pi\widetilde{S}^{\mathcal{W}_1}=\widetilde{S}^{\mathcal{W}_2}(I_{E_1}\ot \Pi)$$ and $$\Pi\widetilde{M}_{\Phi_{i,1}}=\widetilde{M}_{\Phi_{i,2}}(I_{E_{i+1}}\ot \Pi), \quad  \quad i\in I_k.$$ This shows that $\Pi$ is  multi-analytic   for the representations $(\varrho'_1,S^{\mathcal{W}_1},M_{\Phi_{1,1}},\dots,M_{\Phi_{k,1}})$ and $(\varrho'_2,S^{\mathcal{W}_2},M_{\Phi_{1,2}},\dots,M_{\Phi_{k,2}}).$
Furthermore, let's assume that   $\mathcal{M}_1\subseteq\mathcal{M}_2,$  then $\Pi$ is  an  isometry and $M_{\Psi_1}=M_{\Psi_2}\Pi,$  and therefore  there exists an inner operator $\Psi:\mathcal{W}_1\to  \mathcal{W}_2$ such that  $\Pi=M_{\Psi}.$ Note that $M_{\Psi_2}=\sum_{{n} \in \mathbb{N}_0} \widetilde{S}_n(I_{E_1^{\ot n}} \otimes \Psi_2)\widetilde{S}_n^{\mathcal{W}_2^*}$ and $\widetilde{S}_n^*|_{\mathcal{K}}=0$ for all $n\in \mathbb{N},$ we get $M_{\Psi_2}^*|_{\mathcal{K}}={\Psi_2}^*.$ Hence
$\Psi={M_{\Psi}}|_{\mathcal{W}_1}=M_{\Psi_2}^*M_{\Psi_1}|_{\mathcal{W}_1}={\Psi_2}^*{\Psi_1}.$  On the other hand, suppose  that $M_{\Psi_1}=M_{\Psi_2}M_{\Psi}$ and consider the subspaces  $\mathcal{M}_1=M_{\Psi_1}(\mathcal{F}({E}_1)\ot \mathcal{W}_1)$ and $\mathcal{M}_2= M_{\Psi_2}(\mathcal{F}({E}_1)\ot \mathcal{W}_2),$  thus it seems obviously that  $\mathcal{M}_1\subseteq \mathcal{M}_2.$ This observation leads to the following theorem which  characterize the nested invariant subspaces.


\begin{theorem}
	Let $\mathcal{M}_1$ and $\mathcal{M}_2$ be two invariant subspaces for $(\rho,S,M_{\Theta_1},\dots,M_{\Theta_k})$ of ${\mathbb{E}}$ on $\mathcal{F}({E}_1)\ot \mathcal{K}$ and $\Psi_j:\mathcal{W}_j\to \mathcal{K},j\in I_2$ be an inner operators (for the representations $(\varrho'_j,S^{\mathcal{W}_j},M_{\Phi_{1,j}},\dots,M_{\Phi_{k,j}})$ and $(\rho,S,M_{\Theta_1},\dots,M_{\Theta_k})$) such that $\mathcal{M}_j=M_{\Psi}(\mathcal{F}({E}_1)\ot \mathcal{W}_j),$  where $ \mathcal{W}_j={\mathcal{M}_j} \ominus \widetilde{S}(E_1 \ot {\mathcal{M}_j}).$ Then $\mathcal{M}_1\subseteq\mathcal{M}_2$ if and only if there exists an inner operator ${\Psi} : \mathcal{W}_1\to  \mathcal{W}_2$ (for the representations $(\varrho'_1,S^{\mathcal{W}_1},M_{\Phi_{1,1}},\dots,M_{\Phi_{k,1}})$ and $(\varrho'_2,S^{\mathcal{W}_2},M_{\Phi_{1,2}},\dots,M_{\Phi_{k,2}})$) such that $$M_{\Psi_1}=M_{\Psi_2}M_{\Psi},$$ where $\Phi_{i,j}(\xi_i)=\sum_{n \in \mathbb{N}_0}\widetilde{S}_n^{\mathcal{W}_j}(I_{E_1^{\otimes n}} \otimes P_{\mathcal{W}_j}P_{\mathcal{M}_j})\widetilde{S}_{n}^{*}M_{\Theta_i}(\xi_i)|_{\mathcal{W}_j}, ~\xi_i \in E_{i+1}, i\in I_k,j\in I_2.$
\end{theorem}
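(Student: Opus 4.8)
The plan is to prove both implications by transcribing the discussion immediately preceding the theorem into a clean argument, using the multi-analytic operator $\Pi = M_{\Psi_2}^* M_{\Psi_1} : \mathcal{F}(E_1)\ot \mathcal{W}_1 \to \mathcal{F}(E_1)\ot \mathcal{W}_2$ as the central object. For the forward direction, suppose $\mathcal{M}_1 \subseteq \mathcal{M}_2$. First I would record that $M_{\Psi_1}$ and $M_{\Psi_2}$ are inner (hence isometric) multi-analytic operators with $\mathrm{range}(M_{\Psi_j}) = \mathcal{M}_j$, so that $M_{\Psi_2}M_{\Psi_2}^* = P_{\mathcal{M}_2}$. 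Since $\mathcal{M}_1 \subseteq \mathcal{M}_2$ we get $P_{\mathcal{M}_2}M_{\Psi_1} = M_{\Psi_1}$, whence $M_{\Psi_2}\Pi = M_{\Psi_2}M_{\Psi_2}^*M_{\Psi_1} = P_{\mathcal{M}_2}M_{\Psi_1} = M_{\Psi_1}$; in particular $\Pi$ is an isometry because $M_{\Psi_2}$ is isometric on $\mathrm{range}(\Pi) \subseteq \mathrm{range}(M_{\Psi_2}^*M_{\Psi_1})$ and $\|\Pi x\| = \|M_{\Psi_2}\Pi x\| = \|M_{\Psi_1}x\| = \|x\|$.

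Next I would verify that $\Pi$ is multi-analytic for the pair of representations $(\varrho'_1,S^{\mathcal{W}_1},M_{\Phi_{1,1}},\dots,M_{\Phi_{k,1}})$ and $(\varrho'_2,S^{\mathcal{W}_2},M_{\Phi_{1,2}},\dots,M_{\Phi_{k,2}})$. This is exactly the computation in the paragraph before the theorem: applying the intertwining relations \eqref{A3} and \eqref{A4} for $M_{\Psi_1}$ and (after composing with $M_{\Psi_2}^*$) for $M_{\Psi_2}$, one obtains $\Pi\varrho'_1(b)=\varrho'_2(b)\Pi$, $\Pi\widetilde{S}^{\mathcal{W}_1}=\widetilde{S}^{\mathcal{W}_2}(I_{E_1}\ot \Pi)$ and $\Pi\widetilde{M}_{\Phi_{i,1}}=\widetilde{M}_{\Phi_{i,2}}(I_{E_{i+1}}\ot \Pi)$ for $i\in I_k$. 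Since $\Pi$ is now an isometric multi-analytic operator for a pure isometric representation, the discussion of multi-analytic operators (Remark after Theorem \ref{WWWW8}, items (1)--(4)) guarantees an inner operator $\Psi := \Pi|_{\mathcal{W}_1}: \mathcal{W}_1 \to \mathcal{W}_2$ with $\Pi = M_{\Psi}$, so $M_{\Psi_1} = M_{\Psi_2}M_{\Psi}$. (As a by-product, using $M_{\Psi_2}^*|_{\mathcal{K}} = \Psi_2^*$ — which follows from $\widetilde{S}_n^*|_{\mathcal{K}}=0$ for $n\ge 1$ — one identifies $\Psi = \Psi_2^*\Psi_1$.)

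For the converse, suppose there is an inner $\Psi:\mathcal{W}_1\to\mathcal{W}_2$ with $M_{\Psi_1}=M_{\Psi_2}M_{\Psi}$. Then $\mathcal{M}_1 = M_{\Psi_1}(\mathcal{F}(E_1)\ot\mathcal{W}_1) = M_{\Psi_2}M_{\Psi}(\mathcal{F}(E_1)\ot\mathcal{W}_1) \subseteq M_{\Psi_2}(\mathcal{F}(E_1)\ot\mathcal{W}_2) = \mathcal{M}_2$, which is immediate. The only subtlety to flag is the claim that $\mathcal{W}_j = \mathcal{M}_j \ominus \widetilde{S}(E_1\ot\mathcal{M}_j)$ is genuinely the generating wandering subspace of the restricted induced shift and that the formula for $\Phi_{i,j}$ is the one coming from Theorem \ref{A6} applied to each $\mathcal{M}_j$; these are inherited verbatim from Theorem \ref{A6} and Remark \ref{A92}, so no new work is required. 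I expect the main (and essentially only) obstacle to be the bookkeeping in the forward direction: carefully chaining the intertwining identities \eqref{A3}, \eqref{A4} for $M_{\Psi_1}$ with the adjointed identities for $M_{\Psi_2}$ to land the three multi-analyticity relations for $\Pi$, and checking that $\Pi$ is an isometry onto its range so that the representer theorem for multi-analytic operators applies to produce the inner $\Psi$. Everything else is a direct quotation of results established earlier in the paper.
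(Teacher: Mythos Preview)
Your proposal is correct and follows essentially the same route as the paper: define $\Pi=M_{\Psi_2}^*M_{\Psi_1}$, use the intertwining relations \eqref{A3}--\eqref{A4} to see that $\Pi$ is multi-analytic, use $\mathcal{M}_1\subseteq\mathcal{M}_2$ together with $M_{\Psi_2}M_{\Psi_2}^*=P_{\mathcal{M}_2}$ to get $M_{\Psi_2}\Pi=M_{\Psi_1}$ and hence that $\Pi$ is isometric, then invoke the representer theorem to write $\Pi=M_\Psi$ with $\Psi=\Psi_2^*\Psi_1$; the converse is the immediate range inclusion. The only cosmetic remark is that your phrase ``$M_{\Psi_2}$ is isometric on $\mathrm{range}(\Pi)$'' is unnecessary---$M_{\Psi_2}$ is an isometry on all of $\mathcal{F}(E_1)\ot\mathcal{W}_2$, which is what makes the chain $\|\Pi x\|=\|M_{\Psi_2}\Pi x\|=\|M_{\Psi_1}x\|=\|x\|$ work.
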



Now we  discuss the structure of the invariant subspaces for pure doubly commuting isometric representation. We have sufficient data from Theorem \ref{ZZZ2} to consider the representation $(\rho,S,M_{\Theta_1},\dots,M_{\Theta_k})$ on $\mathcal{F}({{E}_{1}})\ot \mathcal{K},$ and discuss about the invariant subspces. 
First, we start with the following definition:
Let $(\sigma,V^{(1)},\dots, V^{(k)})$ and $(\psi, T^{(1)},\dots, T^{(k)} )$ be  c.b.c. representations of $\mathbb{E}$ acting on the Hilbert spaces $\mathcal{H}$ and $\mathcal{H}',$ respectively. Let $\mathcal{M}\subseteq \mathcal{H}$ and $\mathcal{M}'\subseteq \mathcal{H}'$ be  invariant subspaces for the representations $(\sigma,V^{(1)},\dots, V^{(k)})$ and $(\psi, T^{(1)},\dots, T^{(k)}),$ respectively. We say that the subspaces $\mathcal{M}$ and $\mathcal{M}'$ are {\it isomorphic} if $(\sigma,V^{(1)},\dots, V^{(k)})|_{\mathcal{M}}$ is isomorphic to $(\psi, T^{(1)},\dots, T^{(k)} )|_{\mathcal{M}'}.$

Suppose that  $(\sigma , V^{(1)},\dots,V^{(k)})$  and   $(\psi, T^{(1)},\dots, T^{(k)} )$ are    doubly commuting pure isometric representations.   From Theorem \ref{ZZZ2} $(k=1$ case$),$ there exist a Hilbert space $\mathcal{K}$ and a unitary $U_{\mathcal{H}}:\mathcal{H}\to \mathcal{F}({E}_1)\ot \mathcal{K}$ such that $$U_{\mathcal{H}}\sigma(b)=\rho(b) U_{\mathcal{H}},\quad  U_{\mathcal{H}}\wV^{(1)}=\widetilde{S}(I_{E_1}\ot U_{\mathcal{H}})$$  and  $$U_{\mathcal{H}}\wV^{(i+1)}=\widetilde{M}_{\Theta_i}(I_{E_{i+1}}\ot U_{\mathcal{H}}),\quad  i\in I_k.$$
Similarly, for  the representaion $(\psi, T^{(1)},\dots, T^{(k+1)})$ of $\mathbb{E}$ on  $\mathcal{H}',$ there exist a Hilbert space $\mathcal{K}'$ and a unitary $U_{\mathcal{H}'}:\mathcal{H}'\to \mathcal{F}({E}_1)\ot \mathcal{K}'$ such that $$U_{\mathcal{H}'}\psi(b)=\rho'(b) U_{\mathcal{H}'},\quad U_{\mathcal{H}'}\wT^{(1)}=\widetilde{S}'(I_{E_1}\ot U_{\mathcal{H}'})$$ and   $$U_{\mathcal{H}'}\wT^{(i+1)}=\widetilde{M}_{\Theta_i'}(I_{E_{i+1}}\ot U_{\mathcal{H}'}), \quad  i\in I_k.$$  In fact, $U_{\mathcal{H}}$ and $U_{\mathcal{H}'}$ are mutli-analytic.
Suppose that $A:\mathcal{H}\to \mathcal{H}'$ is a multi-analytic operator  for the representaions  $(\sigma , V^{(1)},\dots,V^{(k+1)})$  and   $(\psi, T^{(1)},\dots, T^{(k)} ),$ then the operator  $A':\mathcal{F}({E}_1)\ot \mathcal{K}\to \mathcal{F}({E}_1)\ot \mathcal{K}'$ define  by $A'=U_{\mathcal{H}'}AU_{\mathcal{H}}^*$   is also multi-analytic for the representaions $(\rho,S,M_{\Theta_1},\dots,M_{\Theta_k})$ and $(\rho',S',M_{\Theta_1'},\dots,M_{\Theta_k'}).$ 

On the other hand, suppose $A':\mathcal{F}({E}_1)\ot \mathcal{K}\to \mathcal{F}({E}_1)\ot \mathcal{K}'$ is  multi-analytic for  the representaions $(\rho,S,M_{\Theta_1},\dots,M_{\Theta_k})$ and $(\rho',S',M_{\Theta_1'},\dots,M_{\Theta_k'}).$ Then the operator $A$ yields a multi-analytic for the representaions $(\sigma , V^{(1)},\dots,V^{(k+1)})$  and   $(\psi, T^{(1)},\dots, T^{(k)} ),$ namely $A=U_{\mathcal{H}'}^*A'U_{\mathcal{H}}.$ Moreover, the  construction above  shows that  $A \in B(\mathcal{H},\mathcal{H}')$ is a contraction (resp. isometry, unitary) if and only if  $A' \in B(\mathcal{F}({E}_1)\ot \mathcal{K}, \mathcal{F}({E}_1)\ot \mathcal{K}')$  is a contraction (resp. isometry, unitary).





\begin{definition}
	Let $\pi_1$ and $\pi_2$ be  representations of $\mathcal{B}$ acting on the Hilbert spaces $\mathcal{W}$ and $\mathcal{W}',$ respectively. For $i\in I_k,$ let $\Phi_i:E_i\to B(\mathcal{W},\mathcal{F}({E}_1)\ot \mathcal{W})$ and $\Phi'_i:E_i\to B(\mathcal{W}',\mathcal{F}({E}_1)\ot \mathcal{W}')$ be  completely bounded bi-module maps such that $(\rho,M_{\Phi_1}, \dots,M_{\Phi_k})$ and $(\rho',M_{\Phi_1'},\dots,M_{\Phi_k'})$  are c.b.c representations of $\mathbb{E}$ on  $\mathcal{F}({E}_1)\ot \mathcal{W}$  and $\mathcal{F}({E}_1)\ot \mathcal{W}',$ respectively.   We say that the pairs $(\rho,{\Phi}_1,\dots,{\Phi}_k)$ and $(\rho',{\Phi}'_1,\dots,{\Phi}'_k)$ {\rm coincide} if there exists a unitary $Z:\mathcal{W}\to \mathcal{W}'$ such that $$(I_{\mathcal{F}({E}_1)}\ot Z)\rho(b)=\rho'(b)(I_{\mathcal{F}({E}_1)}\ot Z)$$ and $$ (I_{\mathcal{F}({E}_1)}\ot Z)\widetilde{\Phi}_i=\widetilde{\Phi}_i'(I_{E_{i}}\ot Z), \quad i \in I_k.$$
\end{definition}

The following theorem is a complete set of isomorphic invariants of the corresponding invariant subspaces.

\begin{theorem}\label{CCC}
	Let $(\sigma,V^{(1)},\dots, V^{(k+1)})$ and $(\psi, T^{(1)},\dots, T^{(k+1)} )$ be doubly commuting pure isometric representations of $\mathbb{E}$ acting on the Hilbert spaces $\mathcal{H}$ and $\mathcal{H}',$ respectively. Let $\mathcal{M}\subseteq \mathcal{F}({E}_1)\ot \mathcal{K}$ and $\mathcal{M}'\subseteq \mathcal{F}({E}_1)\ot \mathcal{K}'$ be  invariant subspaces for the representations $(\rho,S,M_{\Theta_1},\dots,M_{\Theta_k})$ and $(\rho',S',M_{\Theta_1'},\dots,M_{\Theta_k'})$ of ${\mathbb{E}}$ acting on the Hilbert spaces $\mathcal{F}({E}_1)\ot \mathcal{K}$ and $\mathcal{F}({E}_1)\ot \mathcal{K}',$ respectively. Then $\mathcal{M}$ is isomorphic to $\mathcal{M}'$ if and only if $\{\varrho',{\Phi}_1,\dots,{\Phi}_k\}$ is coincide with $\{\varrho'',{\Phi}'_1,\dots,{\Phi}'_k\}.$
\end{theorem}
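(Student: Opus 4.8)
The plan is to reduce the statement to an isomorphism question between the two ``model'' representations sitting on $\mathcal{F}(E_1)\ot\mathcal{W}$ and $\mathcal{F}(E_1)\ot\mathcal{W}'$, and then to recognise that any such isomorphism is forced to be an ampliation $I_{\mathcal{F}(E_1)}\ot Z$. First I would invoke Remark \ref{A93}: $(\rho,S,M_{\Theta_1},\dots,M_{\Theta_k})|_{\mathcal{M}}$ is isomorphic to $(\varrho',S^{\mathcal{W}},M_{\Phi_1},\dots,M_{\Phi_k})$ and, symmetrically, $(\rho',S',M_{\Theta_1'},\dots,M_{\Theta_k'})|_{\mathcal{M}'}$ is isomorphic to $(\varrho'',S^{\mathcal{W}'},M_{\Phi_1'},\dots,M_{\Phi_k'})$. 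Hence, by the definition of isomorphic invariant subspaces, $\mathcal{M}$ is isomorphic to $\mathcal{M}'$ if and only if $(\varrho',S^{\mathcal{W}},M_{\Phi_1},\dots,M_{\Phi_k})$ and $(\varrho'',S^{\mathcal{W}'},M_{\Phi_1'},\dots,M_{\Phi_k'})$ are isomorphic as covariant representations of $\mathbb{E}$, i.e.\ there is a unitary $U:\mathcal{F}(E_1)\ot\mathcal{W}\to\mathcal{F}(E_1)\ot\mathcal{W}'$ with $U\varrho'(b)=\varrho''(b)U$, $U\widetilde{S}^{\mathcal{W}}=\widetilde{S}^{\mathcal{W}'}(I_{E_1}\ot U)$ and $U\widetilde{M}_{\Phi_i}=\widetilde{M}_{\Phi_i'}(I_{E_{i+1}}\ot U)$ for all $i\in I_k$. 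The whole theorem thus reduces to showing that the existence of such a $U$ is equivalent to $\{\varrho',\Phi_1,\dots,\Phi_k\}$ coinciding with $\{\varrho'',\Phi_1',\dots,\Phi_k'\}$.

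For the ``if'' direction I would take the unitary $Z:\mathcal{W}\to\mathcal{W}'$ supplied by the coincidence and set $U=I_{\mathcal{F}(E_1)}\ot Z$. Then $U$ is unitary, $U\widetilde{S}^{\mathcal{W}}=\widetilde{S}^{\mathcal{W}'}(I_{E_1}\ot U)$ holds because the creation operators act only on the Fock factor, and $U\varrho'(b)=\varrho''(b)U$ is precisely the first displayed relation in the definition of coincidence. The relation $U\widetilde{M}_{\Phi_i}=\widetilde{M}_{\Phi_i'}(I_{E_{i+1}}\ot U)$ is then obtained by substituting $(I_{\mathcal{F}(E_1)}\ot Z)\widetilde{\Phi}_i=\widetilde{\Phi}_i'(I_{E_{i+1}}\ot Z)$ into the explicit series formula \eqref{SS1} for $M_{\Phi_i}$ and $M_{\Phi_i'}$, using the compatibility of $Z$ with the iterated creation maps $\widetilde{S}^{\mathcal{W}}_{\bf n}$, $\widetilde{S}^{\mathcal{W}'}_{\bf n}$ and the twist isomorphisms. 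Hence $U$ implements an isomorphism of representations, and by the reduction above $\mathcal{M}$ is isomorphic to $\mathcal{M}'$.

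For the ``only if'' direction I would start from a unitary $U$ as above and show it must be an ampliation. Since $U$ intertwines the pure isometric induced representations $(\varrho',S^{\mathcal{W}})$ and $(\varrho'',S^{\mathcal{W}'})$, taking adjoints in $U\widetilde{S}^{\mathcal{W}}=\widetilde{S}^{\mathcal{W}'}(I_{E_1}\ot U)$ shows that $U$ carries $\ker(\widetilde{S}^{\mathcal{W}\,*})$ onto $\ker(\widetilde{S}^{\mathcal{W}'\,*})$; but these kernels are exactly the generating wandering subspaces $\mathcal{W}$ and $\mathcal{W}'$ (the $n=0$ summands), so $Z:=U|_{\mathcal{W}}$ is a unitary from $\mathcal{W}$ onto $\mathcal{W}'$. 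Iterating $U\widetilde{S}^{\mathcal{W}}=\widetilde{S}^{\mathcal{W}'}(I_{E_1}\ot U)$ against the Fock decomposition $\mathcal{F}(E_1)\ot\mathcal{W}=\bigoplus_{n\ge0}\widetilde{S}^{\mathcal{W}}_{n}(E_1^{\ot n}\ot\mathcal{W})$ gives $U\bigl(\widetilde{S}^{\mathcal{W}}_n(\xi_n\ot w)\bigr)=\widetilde{S}^{\mathcal{W}'}_n(\xi_n\ot Zw)$, i.e.\ $U=I_{\mathcal{F}(E_1)}\ot Z$ after the standard identifications; in particular $U\varrho'(b)=\varrho''(b)U$ becomes the first coincidence relation. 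Finally, restricting $U\widetilde{M}_{\Phi_i}=\widetilde{M}_{\Phi_i'}(I_{E_{i+1}}\ot U)$ to $E_{i+1}\ot\mathcal{W}$ and using $M_{\Phi_i}(\xi)|_{\mathcal{W}}=\Phi_i(\xi)$ and $M_{\Phi_i'}(\xi)|_{\mathcal{W}'}=\Phi_i'(\xi)$ yields $(I_{\mathcal{F}(E_1)}\ot Z)\widetilde{\Phi}_i=\widetilde{\Phi}_i'(I_{E_{i+1}}\ot Z)$, the second coincidence relation.

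The step I expect to be the main obstacle is the structural claim in the ``only if'' direction that any unitary intertwining the two induced representations of $E_1$ is necessarily of the form $I_{\mathcal{F}(E_1)}\ot Z$ with $Z$ a unitary \emph{onto} $\mathcal{W}'$ (and not merely into some wandering subspace of the target); this is where the Wold--von Neumann decomposition and the fact that a pure isometric representation is the closed span of the iterated images of its generating wandering subspace do the real work. Everything else --- transporting the $\varrho$- and $\Phi_i$-relations through $U$, and the reverse substitution into \eqref{SS1} --- is a routine unwinding of the definitions in the Notation preceding Lemma \ref{SS2}.
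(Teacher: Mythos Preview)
Your proposal is correct and follows essentially the same route as the paper: reduce via Remark \ref{A93} to the model representations on $\mathcal{F}(E_1)\ot\mathcal{W}$ and $\mathcal{F}(E_1)\ot\mathcal{W}'$, and then show that any unitary intertwining $(\varrho',S^{\mathcal{W}})$ and $(\varrho'',S^{\mathcal{W}'})$ must be an ampliation $I_{\mathcal{F}(E_1)}\ot Z$. The only cosmetic difference is that the paper phrases the ``only if'' step through the multi-analytic formalism (writing $U=M_\theta$ for an inner $\theta$ and reading off $U=I_{\mathcal{F}(E_1)}\ot\theta$ from the definition of $M_\theta$), whereas you argue directly via $\ker(\widetilde{S}^{\mathcal{W}\,*})=\mathcal{W}$ and iteration of the intertwining relation along the Fock grading; these are the same computation in different clothing, and your worry about surjectivity of $Z$ is handled exactly as you say, by the unitarity of $U$ together with $U^*$ mapping $\mathcal{W}'=\ker(\widetilde{S}^{\mathcal{W}'\,*})$ into $\mathcal{W}$.
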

\begin{proof}
	Let $\mathcal{M}$ be a inavriant subspace for  $(\rho,S,M_{\Theta_1},\dots,M_{\Theta_k}),$ by Theorem \ref{A6} and Remark \ref{A93},  there exist a representaion of $\mathcal{B}$ acting on a Hilbert space $\mathcal{W},$ completely bi-module maps $\Phi_i:E_{i+1}\to B(\mathcal{W},\mathcal{F}({E}_1)\ot \mathcal{W})$ such that    $(\varrho',S^{\mathcal{W}} ,M_{\Phi_1},\dots,M_{\Phi_k})$  is isomorpic to $(\rho,S,M_{\Theta_1},\dots,M_{\Theta_k})|_{\mathcal{M}}.$ Moreover, $\mathcal{W}=\mathcal{M}\ominus \widetilde{S}(E_1\ot \mathcal{M})$  and  $\Phi_i(\xi_i)=\sum_{n \in \mathbb{N}_0}\widetilde{S}_n^{\mathcal{W}}(I_{E_1^{\otimes n}} \otimes P_{\mathcal{W}}P_{\mathcal{M}})\widetilde{S}_{n}^{*}M_{\Theta_i}(\xi_i)|_{\mathcal{W}}$ $\xi_i \in E_{i+1}, i\in I_k.$
	Suppose $\mathcal{M}\subseteq \mathcal{F}({E}_1)\ot \mathcal{K}$ is isomorphic to $\mathcal{M}'\subseteq \mathcal{F}({E}_1)\ot \mathcal{K}',$ i.e., the representations $(\rho,S,M_{\Theta_1},\dots,M_{\Theta_k})|_{\mathcal{M}}$   and $(\rho',S',M_{\Theta_1'},\dots,M_{\Theta_k'})|_{\mathcal{M}'}$  are isomorphic, then using the corresponding covariant representations     $(\varrho',S^{\mathcal{W}} ,M_{\Phi_1},\dots,M_{\Phi_k})$  of $(\rho,S,M_{\Theta_1},\dots,M_{\Theta_k})|_{\mathcal{M}}$ and $(\varrho'',S^{\mathcal{W}'} ,M_{\Phi'_1},\dots,M_{\Phi'_k})$  of $(\rho',S',M_{\Theta_1'},\dots,M_{\Theta_k}')|_{\mathcal{M}'}$ from above are isomorphic. 
	This implies that there exists a unitary operator $U:\mathcal{F}({E}_1)\ot \mathcal{W} \to \mathcal{F}({E}_1)\ot \mathcal{W}'$ such that
	\begin{align*}
		U\varrho'(b)=\varrho''(b)U,~  U\widetilde{S}^{\mathcal{W}}=\widetilde{S}^{\mathcal{W}'}(I_{E_1}\ot U), \quad U\widetilde{M}_{\Phi_{i}}=\widetilde{M}_{\Phi'_{i}}(I_{E_{i+1}}\ot U),
	\end{align*} for all $i\in I_k.$
	Note that  the unitary operator  $U$ is multi-analytic, thus $U=M_{\theta}$ for some inner operator $\theta: \mathcal{W} \rightarrow \mathcal{W}'.$ In fact,  the corresponding inner operator $\theta$ is onto. Since $U$ intertwine the representations $(\varrho',S^{\mathcal{W}})$ and $(\varrho'',S^{\mathcal{W}'})$ and by the definition  of $M_{\theta}$ (see Equation (\ref{SS1})),	we can easily see that   $U=I_{\mathcal{F}({E}_1)}\ot Z,$ where $Z=\theta.$ Using the above  equation, we get $$  (I_{\mathcal{F}({E}_1)}\ot Z)\widetilde{\Phi}_i=\widetilde{\Phi}_i'(I_{E_{i+1}}\ot Z), \quad i \in I_k.$$ Conversely, suppose that $\{\varrho',{\Phi}_1,\dots,{\Phi}_k\}$ and  $\{\varrho'',{\Phi}'_1,\dots,{\Phi}'_k\}$ are coincide, there exists a unitary $Z:\mathcal{W}\to \mathcal{W}'$ such that $$(I_{\mathcal{F}({E}_1)}\ot Z)\varrho'(b)=\varrho''(b)(I_{\mathcal{F}({E}_1)}\ot Z) \quad and\quad (I_{\mathcal{F}({E}_1)}\ot Z)\widetilde{\Phi}_i=\widetilde{\Phi}_i'(I_{E_{i+1}}\ot Z),$$ for all $i\in I_k.$ Clearly $U=I_{\mathcal{F}({E}_1)}\ot Z$ is a unitary from $\mathcal{F}({E}_1)\ot \mathcal{W}$ to $\mathcal{F}({E}_1)\ot \mathcal{W}'.$ From Equation (\ref{SS1}), it is easy to verify that $	U\widetilde{S}^{\mathcal{W}}=\widetilde{S}^{\mathcal{W}'}(I_{E_1}\ot U)$ and $ U\widetilde{M}_{\Phi_{i}}=\widetilde{M}_{\Phi'_{i}}(I_{E_{i+1}}\ot U)$ for $i\in I_k.$ Then, by Remark \ref{A93}, $\mathcal{M}$ is isomorphic to $\mathcal{M}'.$
\end{proof}

Form the above theorem, since $\mathcal{M}$ and $\mathcal{M}'$ are  the invariant subspaces for  the representations $(\rho,S)$ and $(\rho',S')$ and by Theorem \ref{WWWW8}, there are Hilbert spaces $\mathcal{W}$ and $\mathcal{W}'$ such that  $$\mathcal{M}=M_{\Psi}(\mathcal{F}({E}_1)\ot \mathcal{W})\quad and \quad \mathcal{M}'=M_{\Psi'}(\mathcal{F}({E}_1)\ot \mathcal{W}'),$$ for some inner operators  $\Psi:\mathcal{W}\to \mathcal{K}$ and $\Psi':\mathcal{W}'\to \mathcal{K}'.$  Then by Remark \ref{A92}, the completely bounded bi-module maps $\Phi_i, \Phi'_i,\: i \in I_k,$  in Theorem \ref{CCC} can be written as 
$$\Phi_i(\xi_i)=\sum_{n \in \mathbb{N}_0}\widetilde{S}_{n}^{\mathcal{W}}(I_{E_1^{\otimes n}} \otimes P_{\mathcal{W}}M_{\Psi})\widetilde{S}_{n}^{\mathcal{W}^*}M_{\Psi}^*M_{\Theta_i}(\xi_i)|_{\mathcal{W}}$$ and $$\Phi'_i(\xi_i)=\sum_{n \in \mathbb{N}_0}\widetilde{S}_{n}^{\mathcal{W}'}(I_{E_1^{\otimes n}} \otimes P_{\mathcal{W}'}M_{\Psi'})\widetilde{S}_{n}^{\mathcal{W}'^*}M_{\Psi'}^*M_{\Theta'_i}(\xi_i)|_{\mathcal{W}'}$$ for all $\xi_i \in E_{i+1}, i\in I_k .$

The  following theorem characterize the  invariant subspaces for  $(\rho,S,M_{\Theta_1},\dots,M_{\Theta_k})$ which  is isomorphic to $\mathcal{F}({E}_1)\ot \mathcal{K}'.$
\begin{theorem}\label{LLL}
	Let $(\sigma,V^{(1)},\dots, V^{(k+1)})$ and $(\psi, T^{(1)},\dots, T^{(k+1)} )$ be two doubly commuting pure isometric representations of $\mathbb{E}$ acting on the Hilbert spaces $\mathcal{H}$ and $\mathcal{H}',$ respectively. Let $\mathcal{M}\subseteq \mathcal{F}({E}_1)\ot \mathcal{K}$ be an invariant subspace for $(\rho,S,M_{\Theta_1},\dots,M_{\Theta_k})$ of ${\mathbb{E}}$ on $\mathcal{F}({E}_1)\ot \mathcal{K}$. Then $\mathcal{M}$ is isomorphic to $\mathcal{F}({E}_1)\ot \mathcal{K}'$ if and only if there exists an inner operator $\Psi :\mathcal{K}'\to  \mathcal{K}$ ( for the representations $(\rho',S',M_{\Theta_1'},\dots,M_{\Theta_k'})$ and $(\rho,S,M_{\Theta_1},\dots,M_{\Theta_k})$) such that $$\mathcal{M}=M_{\Psi}(\mathcal{F}({E}_1)\ot \mathcal{K}').$$
\end{theorem}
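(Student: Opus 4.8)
The plan is to reduce, by Theorem~\ref{A6} together with Remark~\ref{A93}, the statement ``$\mathcal{M}$ is isomorphic to $\mathcal{F}({E}_1)\ot\mathcal{K}'$'' to the statement that the standard model $(\varrho',S^{\mathcal{W}},M_{\Phi_1},\dots,M_{\Phi_k})$ attached to $\mathcal{M}$ on $\mathcal{F}({E}_1)\ot\mathcal{W}$ is isomorphic to $(\rho',S',M_{\Theta_1'},\dots,M_{\Theta_k'})$. The key observation will be that any unitary implementing such an isomorphism is forced to be of the form $I_{\mathcal{F}({E}_1)}\ot z$ in the Fock variable, and it is exactly this $z$ (composed with the Beurling representer of $\mathcal{M}$) that produces the required inner operator $\Psi:\mathcal{K}'\to\mathcal{K}$.

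The converse implication is immediate: if $\Psi:\mathcal{K}'\to\mathcal{K}$ is inner for $(\rho',S',M_{\Theta_1'},\dots,M_{\Theta_k'})$ and $(\rho,S,M_{\Theta_1},\dots,M_{\Theta_k})$ with $\mathcal{M}=M_{\Psi}(\mathcal{F}({E}_1)\ot\mathcal{K}')$, then by definition $M_{\Psi}$ is an isometric multi-analytic operator, so it restricts to a unitary from $\mathcal{F}({E}_1)\ot\mathcal{K}'$ onto $\mathcal{M}$ intertwining the two $(k+1)$-tuples; hence $(\rho,S,M_{\Theta_1},\dots,M_{\Theta_k})|_{\mathcal{M}}$ is isomorphic to $(\rho',S',M_{\Theta_1'},\dots,M_{\Theta_k'})$, that is, $\mathcal{M}$ is isomorphic to $\mathcal{F}({E}_1)\ot\mathcal{K}'$.

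For the forward implication, apply Theorem~\ref{A6} to get a Hilbert space $\mathcal{W}$ carrying a representation of $\mathcal{B}$, a pure isometric representation $(\varrho',S^{\mathcal{W}},M_{\Phi_1},\dots,M_{\Phi_k})$, and an inner operator $\Psi_0:\mathcal{W}\to\mathcal{K}$ with $\mathcal{M}=M_{\Psi_0}(\mathcal{F}({E}_1)\ot\mathcal{W})$; by Remark~\ref{A93} the operator $M_{\Psi_0}$ realizes an isomorphism of $(\varrho',S^{\mathcal{W}},M_{\Phi_1},\dots,M_{\Phi_k})$ with $(\rho,S,M_{\Theta_1},\dots,M_{\Theta_k})|_{\mathcal{M}}$. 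Composing with the assumed isomorphism $\mathcal{M}\cong\mathcal{F}({E}_1)\ot\mathcal{K}'$ produces a unitary $Z:\mathcal{F}({E}_1)\ot\mathcal{W}\to\mathcal{F}({E}_1)\ot\mathcal{K}'$ intertwining $(\varrho',S^{\mathcal{W}},M_{\Phi_1},\dots,M_{\Phi_k})$ and $(\rho',S',M_{\Theta_1'},\dots,M_{\Theta_k'})$. Since $Z$ in particular intertwines the induced $E_1$-representations $(\varrho',S^{\mathcal{W}})$ and $(\rho',S')$, the argument used in the proof of Theorem~\ref{CCC} (via the defining formula (\ref{SS1}) for the multiplier) shows that $Z=I_{\mathcal{F}({E}_1)}\ot z$ for a unitary $z:\mathcal{W}\to\mathcal{K}'$ with $z\,\varrho'(b)|_{\mathcal{W}}=\rho'(b)|_{\mathcal{K}'}\,z$. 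Put $\Psi:=\Psi_0 z^{*}$.

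It then remains to verify that $M_{\Psi}=M_{\Psi_0}(I_{\mathcal{F}({E}_1)}\ot z^{*})=M_{\Psi_0}Z^{*}$: this map is an isometry (the isometry $M_{\Psi_0}$ composed with the unitary $Z^{*}$), its range is $M_{\Psi_0}(\mathcal{F}({E}_1)\ot\mathcal{W})=\mathcal{M}$, and chaining the intertwining identities (\ref{A3}) and (\ref{A4}) for $M_{\Psi_0}$ with those for $Z^{*}$ gives that $M_{\Psi}$ intertwines $(\rho',S',M_{\Theta_1'},\dots,M_{\Theta_k'})$ with $(\rho,S,M_{\Theta_1},\dots,M_{\Theta_k})$. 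Hence $\Psi$ is inner in the sense required and $\mathcal{M}=M_{\Psi}(\mathcal{F}({E}_1)\ot\mathcal{K}')$. The one non-formal ingredient is the diagonalisation $Z=I_{\mathcal{F}({E}_1)}\ot z$, which is where the induced/pure structure of the first coordinate enters and which I would either cite from the proof of Theorem~\ref{CCC} or reprove in a line; everything else is bookkeeping with the intertwining relations.
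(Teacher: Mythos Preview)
Your proposal is correct, but the paper's argument for the forward direction is considerably more direct. The paper simply takes the unitary $U:\mathcal{F}(E_1)\ot\mathcal{K}'\to\mathcal{M}$ that implements the assumed isomorphism, composes with the inclusion $i_{\mathcal{M}}:\mathcal{M}\hookrightarrow\mathcal{F}(E_1)\ot\mathcal{K}$ to obtain $\Pi=i_{\mathcal{M}}U$, checks in one line that $\Pi$ is an isometric multi-analytic operator for the pair $(\rho',S',M_{\Theta_1'},\dots,M_{\Theta_k'})$ and $(\rho,S,M_{\Theta_1},\dots,M_{\Theta_k})$, and then reads off $\Pi=M_{\Psi}$ for an inner $\Psi$ from the general multi-analytic formalism. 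Your route instead passes first through Theorem~\ref{A6} to produce the auxiliary wandering space $\mathcal{W}$ and representer $\Psi_0$, then invokes the diagonalisation step from the proof of Theorem~\ref{CCC} to force $Z=I_{\mathcal{F}(E_1)}\ot z$, and finally sets $\Psi=\Psi_0 z^{*}$. Both approaches yield the same inner operator (indeed $\Psi_0$ is, up to identification, just the inclusion of $\mathcal{W}$, so $\Psi_0 z^{*}$ coincides with $i_{\mathcal{M}}U|_{\mathcal{K}'}$), but the paper's proof avoids the detour through $\mathcal{W}$, Remark~\ref{A93}, and the diagonalisation altogether: the multi-analytic machinery already converts any isometric intertwiner out of an induced representation into an $M_{\Psi}$ in one step. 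Your argument has the minor advantage of making explicit how $\Psi$ relates to the canonical Beurling data $(\mathcal{W},\Psi_0,\Phi_i)$ of $\mathcal{M}$, at the cost of importing the structure result from Theorem~\ref{CCC}.
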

\begin{proof}
	Suppose $\mathcal{M}$ is isomorphic to $\mathcal{F}({E}_1)\ot \mathcal{K}',$ i.e., $(\rho,S,M_{\Theta_1},\dots,M_{\Theta_k})|_{\mathcal{M}}$ is isomorphic to $(\rho',S',M_{\Theta_1'},\dots,M_{\Theta_k'}),$ then there exists a unitary $U:\mathcal{F}({E}_1)\ot \mathcal{K}'\to \mathcal{M}$ such that $$U\rho'(b)=\rho(b)U, ~ U \widetilde{S}'=\widetilde{S}(I_{E_1}\ot U)\quad and \quad U \widetilde{M}_{\Theta_i'}=\widetilde{M}_{\Theta_i}(I_{E_{i+1}}\ot U).$$
	Let $i_{\mathcal{M}}$ be the inclusion map from ${\mathcal{M}}$ to $\mathcal{F}({E}_1)\ot \mathcal{K}.$ Define an isometry $\Pi: \mathcal{F}(E_1)\ot \mathcal{K}'\to \mathcal{F}({E}_1)\ot \mathcal{K}$ by $\Pi=i_{\mathcal{M}}U,$ then $\Pi \Pi^*=i_{\mathcal{M}}i_{\mathcal{M}}^*,$ $$\Pi \rho'(b)=\rho(b)\Pi \quad \Pi\widetilde{S}'=\widetilde{S}(I_{E_1}\ot \Pi), \quad and \quad \Pi\widetilde{M}_{\Theta_i'}=\widetilde{M}_{\Theta_i}(I_{E_{i+1}}\ot \Pi).$$ Therefore  $\Pi$ is an isometric multi-analytic operator and thus  there exists an inner operator $\Psi:\mathcal{K}'\to  \mathcal{K}$ such that  $\Pi=M_{\Psi}.$ This shows that $$\mathcal{M}=\Pi(\mathcal{F}(E_1)\ot \mathcal{K}')=M_{\Psi}(\mathcal{F}(E_1)\ot \mathcal{K}').$$ 
	
	On the other hand, let $\Psi:\mathcal{K}'\rightarrow \mathcal{K}$ be an inner  operator such that $\mathcal{M}=M_{\Psi}(\mathcal{F}({E}_1)\ot \mathcal{K}')\subseteq \mathcal{F}({E}_1)\ot \mathcal{K},$ then $M_{\Psi}:\mathcal{F}({E}_1)\ot \mathcal{K}'\to \mathcal{M}$ is a unitary which intertwines  the representations $(\rho',S',M_{\Theta_1'},\dots,M_{\Theta_k'})$  and $(\rho,S,M_{\Theta_1},\dots,M_{\Theta_k})|_{\mathcal{M}}.$ Then $\mathcal{M}$ is isomorphic to $\mathcal{F}({E}_1)\ot \mathcal{K}'.$
\end{proof}

\begin{remark}
	Let $\mathcal{M}\subseteq \mathcal{F}({E}_1)\ot \mathcal{K}$ be a doubly commuting  subspace for $(\rho,S,M_{\Theta_1},\dots,M_{\Theta_k})$  on $\mathcal{F}({E}_1)\ot \mathcal{K},$ then as an invariant subspace $\mathcal{M},$ $(\varrho',S^{\mathcal{W}},M_{\Phi_1},\dots,M_{\Phi_k})$ on $\mathcal{F}({E}_1)\ot \mathcal{W}$   be defined as in the proof of  Theorem \ref{A6} is  isomorphic to $(\rho,S,M_{\Theta_1},\dots,M_{\Theta_k})|_{\mathcal{M}},$ that is,  $\mathcal{M}$ is isomorphic to $\mathcal{F}({E}_1)\ot \mathcal{W}.$ Therefore, by Theorem \ref{LLL}, $\mathcal{M}=M_{\Psi}(\mathcal{F}({E}_1)\ot \mathcal{W}),$ for some inner operator $\Psi :\mathcal{W}\to  \mathcal{K}.$ Since $\mathcal{M}$ is the doubly commuting subspace,  $(\varrho',S^{\mathcal{W}},M_{\Phi_1},\dots,M_{\Phi_k})$ on $\mathcal{F}({E}_1)\ot \mathcal{W}$ is pure doubly commuting isometric representation. 
\end{remark}

\subsection*{Acknowledgment}
Dimple Saini is supported by UGC fellowship (File No:16-6(DEC. 2018)/\\2019(NET/CSIR)). Harsh Trivedi is supported by MATRICS-SERB  
Research Grant, File No: MTR/2021/000286, by 
SERB, Department of Science \& Technology (DST), Government of India. Shankar Veerabathiran thanks IMSc Chennai for postdoc fellowship. Saini and Trivedi acknowledge the DST-FIST program (Govt. of India) FIST - No. SR/FST/MS-I/2018/24.

\end{document}